\documentclass[12pt]{article}
\usepackage{exscale,relsize}
\usepackage{amsmath}
\usepackage{amsfonts}
\usepackage[hidelinks]{hyperref}
\usepackage{amssymb}
\usepackage{calc}
\usepackage{theorem}
\usepackage{pifont}      
\usepackage{array}
\usepackage{color}
\usepackage{enumerate}
\usepackage{bbm}
\usepackage{graphicx}
\usepackage{float}
\usepackage{subfigure}

\newcommand{\scal}[2]{\langle{{#1},{#2}}\rangle}

\newcommand{\RR}{\ensuremath{\mathbb R}}

\newcommand{\RPP}{\ensuremath{\left(0,+\infty\right)}}
\newcommand{\RX}{\ensuremath{\left(-\infty,+\infty\right]}}

\newcommand{\NN}{\ensuremath{\mathbb N}}

\newcommand{\menge}[2]{\big\{{#1} \mid {#2}\big\}}

\newcommand{\dom}{\ensuremath{\operatorname{dom}}}

\newcommand{\gra}{\ensuremath{\operatorname{gra}}}

\newcommand{\prox}{\ensuremath{\operatorname{P}_{\mu}}}
\newcommand{\refl}{\ensuremath{\operatorname{R}_{\mu}}}

\newcommand{\inte}{\ensuremath{\operatorname{int}}}

\newcommand{\rank}{\ensuremath{\operatorname{rank}}}

\newcommand{\Fix}{\ensuremath{\operatorname{Fix}}}

\newcommand{\Id}{\ensuremath{\operatorname{Id}}}

\newcommand{\BB}{\ensuremath{\mathbb{B}}}

\renewcommand{\phi}{\ensuremath{\varphi}}

\newtheorem{theorem}{Theorem}[section]
\newtheorem{lemma}[theorem]{Lemma}
\newtheorem{fact}[theorem]{Fact}
\newtheorem{corollary}[theorem]{Corollary}
\newtheorem{proposition}[theorem]{Proposition}
\newtheorem{definition}[theorem]{Definition}

\theoremstyle{plain}{\theorembodyfont{\rmfamily}
}
\theoremstyle{plain}{\theorembodyfont{\rmfamily}
}
\theoremstyle{plain}{\theorembodyfont{\rmfamily}
}
\theoremstyle{plain}{\theorembodyfont{\rmfamily}
\newtheorem{example}[theorem]{Example}}
\theoremstyle{plain}{\theorembodyfont{\rmfamily}
\newtheorem{remark}[theorem]{Remark}}
\theoremstyle{plain}{\theorembodyfont{\rmfamily}
}

\newcommand{\pluss}{{\hskip1pt \raise1pt\vbox{\hrule width6pt \vskip1pt
\hrule width6pt}\kern-4pt{\lower1pt\hbox{\vrule height6pt \kern1pt\vrule
height6pt}}\hskip5pt}}

\newcommand{\argmin}{\mathop{\rm argmin}\limits}

\usepackage{tikz}

\begin{document}

\title{{\fontfamily{ptm}\selectfont Modulus of conically averaged mappings and its applications to angles between two
subspaces}}

\author{
Honglin Luo\thanks{School of Mathematical Sciences, Chongqing Normal University, Chongqing, PRC. Email: \texttt{071025013@fudan.edu.cn}},
~Shuang Song\thanks{School of Mathematics and Statistics, The University of Melbourne,
Parkville, VIC 3010, Australia. E-mail: \texttt{ssong19249@student.unimelb.edu.au}
}, and Xianfu Wang\thanks{Department of Mathematics, I.K. Barber Faculty of Science,
The University of British Columbia, Kelowna, BC Canada V1V 1V7. E-mail:  \texttt{shawn.wang@ubc.ca}} }

\date{May 15, 2026 (Revision)}

\maketitle

\vskip 8mm

\begin{abstract} \noindent Conically averaged mappings, a generalization of averaged mappings, are important in a wide range of Optimization Algorithms.
In this paper, we propose the modulus of conical averagedness to classify conical averaged mappings.
Introducing the monotone and comonotone values of generalized monotone mappings,
we investigate their connections to the modulus of conical averagedness.
In the linear setting, we completely characterize conically
averaged matrices, and derive explicit and pleasing formulae for computing their modulus
of averagedness.
As applications, we compute the Dixmier and Friedrichs angles between
two subspaces. Nonlinear results are established as extensions of the linear case.
Conical averagedness of proximal and reflection mappings
of hypoconvex functions are also studied.
\end{abstract}

{\small
\noindent {\bfseries 2020 Mathematics Subject Classification:} Primary 47H05, 47H09, 47H10; Secondary 15A18, 15A60, 65F15

\noindent {\bfseries Keywords:} Conically averaged mapping,
comonotone value, Dixmier angle, Friedrichs angle, generalized monotone mapping,
hypoconvex function,
modulus of conical averagedness.


\section{Introduction}
Throughout, we assume that
$$
X \text { is a real Hilbert space with inner product }\langle\cdot, \cdot\rangle: X \times X \rightarrow \mathbb{R} \text {, }
$$ and induced norm $\|\cdot\|$. Let $\Id$ denote the identity operator on $X$.
Recall the following well-known notion of nonexpansiveness and its variants \cite{BC, cegielski}, which play a central role in applied mathematics.
\begin{definition}
Let $T: X \rightarrow X$ and let $\mu>0$. Then $T$ is
\begin{enumerate}
\item nonexpansive\footnote{For convenience, we shall assume that
$T$ has a full domain throughout the paper while
one can generalize it to be a proper subset of $X$.} if $$
(\forall x \in X)(\forall y \in X) \quad\|T x-T y\| \leq\|x-y\|;
$$
\item $\mu$-cocoercive if
$$
(\forall x \in X)(\forall y \in X) \quad \langle x-y, T x-T y\rangle \geq \mu\|T x-T y\|^2;
$$
\item $k$-averaged
if $T=(1-k) \Id+k N$ for some nonexpansive operator $N$ and some $k \in [0,1]$.
\end{enumerate}
The set of fixed points of $T$ is denoted by $\Fix T :=\menge{x\in X}{Tx=x}.$
\end{definition}

Averaged mappings are broad and important in optimization;
see, e.g., \cite{BBR, BBM, BC, BM, BMW, nollphan, cegielski, combettes,  CY, OY, XH}. The following definition naturally extends the concept of an averaged mapping.

\begin{definition}\emph{\cite[Definition 3.1]{BMW}(see also \cite[Definition 2.1]{BDP})}
Let $T: X \rightarrow X$. Then $T$ is conically $k$-averaged, if $T=(1-k) \Id+k N$ for some nonexpansive operator $N:X\rightarrow X$, and $k \in[0,+\infty)$.
\end{definition}

Due to applications in nonconvex optimization, conically averaged mappings are very active topics in recent years;
see, e.g., \cite{BDP,BMW, evens, PG, PG1, XH}.
In particular, recent exemplary work by Bartz, Dao and Phan \cite{BDP} fully demonstrates
how one can deploy conical averaged mappings
in various optimization algorithms.
Note that conical averagedness is also referred to as conical nonexpansiveness. Similarly to averagedness, when $k \in (0,+\infty)$, various characterizations of conically $k$-averagedness are available, including (see \cite[Proposition 2.2]{BDP})
\begin{equation}\label{e:average1}
(\forall x \in X)(\forall y \in X) \quad\|T x-T y\|^2 \leq\|x-y\|^2-\frac{1-k}{k}\|(\operatorname{Id}-T) x-(\operatorname{Id}-T) y\|^2,
\end{equation}
and
\begin{equation}\label{e:average2}
(\forall x \in X)(\forall y \in X) \quad\|T x-T y\|^2+(1-2 k)\|x-y\|^2 \leq 2(1-k)\langle x-y, T x-T y\rangle.
\end{equation}

From \eqref{e:average1} or \eqref{e:average2} and the fact that $\mathrm{Id}$ is the only conically
$0$-averaged operator, we can deduce that if an
operator is conically $k_0$-averaged, then it is conically $k$-averaged for every $k \geq k_0$. Motivated by this, Bauschke, Bendit and Moursi \cite{BBM} proposed the \emph{modulus of averagedness}, defined as the minimum averaged constant of an averaged mapping. Now we naturally extend this concept to a conical version.

\begin{definition}[modulus of conically averaged mapping]
Let $T: X \rightarrow X$. Then the modulus of conical averagedness of $T$ is defined by
$$
k(T):=\inf \{k \in[0,+\infty) \mid T \text { is } \text{conically $k$-averaged }\}.
$$
\end{definition}

Note that if $T$ is not conically $k$-averaged for any $k \in[0,+\infty)$, then $k(T)=+\infty$ since $\inf \varnothing=+\infty$. We say that $T: X \rightarrow X$ is conically averaged, if it is conically $k$-averaged for some $k \in[0,+\infty)$, equivalently, $k(T)<+\infty$. Finding $k(T)$ is an optimization problem by nature.
\begin{proposition} Let $T:X\rightarrow X$ be conically averaged with $k(T)>0$. Then
$$\left. k(T)=\sup\left\{\frac{1}{2}\frac{\|Tx-Ty-(x-y)\|^2}{\|x-y\|^2-\scal{x-y}{Tx-Ty}}\right\vert\ \|x-y\|^2\neq \scal{x-y}{Tx-Ty}, x,y\in X\right\}.$$
\end{proposition}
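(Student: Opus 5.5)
Throughout, write $u:=x-y$ and $v:=Tx-Ty$. The plan is to rewrite the characterization \eqref{e:average2} so that $k$ appears linearly, and then read off the infimum of admissible $k$ as a supremum of ratios. Expanding \eqref{e:average2}, the condition
$$\|v\|^2+(1-2k)\|u\|^2\le 2(1-k)\scal{u}{v}$$
is equivalent to
$$\|v\|^2-2\scal{u}{v}+\|u\|^2\le 2k\bigl(\|u\|^2-\scal{u}{v}\bigr),$$
that is,
$$\|v-u\|^2\le 2k\bigl(\|u\|^2-\scal{u}{v}\bigr)\qquad\text{for all }x,y\in X. \quad (\ast)$$
So for $k>0$, $T$ is conically $k$-averaged if and only if $(\ast)$ holds for all $x,y$.

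\textbf{Step 1 (sign of the denominator).} Since $T$ is conically $k_0$-averaged for some $k_0>0$ (we may take $k_0>0$ by monotonicity in $k$), applying $(\ast)$ with $k_0$ shows that $\|u\|^2-\scal{u}{v}\ge 0$ for all $x,y$. Moreover, if $\|u\|^2=\scal{u}{v}$ then $v=u$. Hence the pairs excluded from the supremum satisfy $(\ast)$ trivially for every $k$, and on the remaining pairs the denominator is strictly positive.

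\textbf{Step 2 (equivalence).} Let $s$ denote the supremum in the statement. For $k>0$, Step 1 shows that $(\ast)$ holds for all $x,y$ if and only if
$$\tfrac12\frac{\|v-u\|^2}{\|u\|^2-\scal{u}{v}}\le k$$
for every admissible pair, i.e., if and only if $k\ge s$. Therefore the set of $k>0$ for which $T$ is conically $k$-averaged is exactly $[s,+\infty)\cap(0,+\infty)$.

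\textbf{Step 3 (identify the infimum).} Since $k(T)>0$, $T\neq\Id$, because $\Id$ is conically $0$-averaged. So there is a pair with $v\neq u$; by Step 1 it is admissible and contributes a positive ratio, giving $s>0$. The admissible set of $k$ is then $[s,+\infty)$, possibly together with $k=0$. But $0$ is admissible only if $T=\Id$, which is excluded by $k(T)>0$. Hence $k(T)=\inf[s,+\infty)=s$; in particular $s<+\infty$ because $T$ is conically averaged.

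The main point needing care is Step 1: one must check that the denominator is nonnegative, and vanishes only when $v=u$, so that dividing by it is legitimate and the excluded pairs impose no constraint. The remainder is algebra. The hypothesis $k(T)>0$ is used only to exclude the degenerate case $k=0$, where the equivalence with \eqref{e:average2} is not available.
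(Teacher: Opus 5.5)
Your approach is the same as the paper's: the paper's proof consists only of rewriting \eqref{e:average2} as $\|Tx-Ty-(x-y)\|^2\le 2k\bigl(\|x-y\|^2-\scal{x-y}{Tx-Ty}\bigr)$ and reading off the infimum. Your Steps 1 and 2 supply bookkeeping that the paper leaves implicit: the denominator is nonnegative, and the excluded pairs satisfy $Tx-Ty=x-y$, so they impose no constraint.

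One inference in Step 3 is wrong as written. From $T\neq\Id$ you cannot conclude that some pair has $Tx-Ty\neq x-y$; the translation $T=\Id+w$ with $w\neq 0$ is a counterexample. What you need is that $T$ is not a translate of $\Id$, and this does follow from $k(T)>0$. You can get it from Lemma~\ref{zero case} (its characterization of $k(T)=0$). You can also get it directly from your Step 2: if no pair were admissible, $(\ast)$ would hold for every $k>0$, which forces $k(T)=0$. With this one-line repair the proof is complete.
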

\begin{proof} Rewrite \eqref{e:average2} as
$\|T x-T y-(x-y)\|^2 \leq 2 k\left(\|x-y\|^2-\langle x-y, T x-T y\rangle\right).$
\end{proof}

\emph{The goal of this paper is to study the modulus of conically averaged mappings.
As a key contribution, the conical averagedness of linear mappings is systematically analyzed.
We give explicit formulae to compute the modulus of a conically averaged matrix.
An amazing application is the algorithmic computation of the angle between two subspaces.
We also study the modulus of nonlinear conically averaged mappings.
They allow us to derive different formulae for angles between
subspaces. Many examples are provided to illustrate our
results. These results are  new even for averaged mappings.}

The rest of the paper is organized as follows. Section~\ref{s:auxiliary r} provides auxiliary results on
conically averaged mappings. Section~\ref{s:monotonev} introduces monotone and comonotone values for generalized monotone mappings, and highlights their connections to the modulus of conical averagedness.
In Section~\ref{s:linearw} we investigate conical averagedness in the linear setting and present
remarkable characterizations of conically averaged matrices, along with formulae for computing the modulus
of conical averagedness. It turns out that a conically averaged matrix lies in the interior of the set of
all conically averaged matrices if and only if the maximal eigenvalue of its symmetric part is less than one.
 In Section~\ref{s:angleb}, we explore the Dixmier and Friedrichs angles
between two subspaces using the modulus of averagedness, and propose new computational methods.
In Section~\ref{s:nonlinear}, we establish further nonlinear results that serve as generalizations or alternatives to our earlier results on matrices. Finally, in Section~\ref{s:hypoconvex}
we apply results in earlier sections to hypoconvex functions.

\section{Auxiliary results}\label{s:auxiliary r}

This section collects some preparatory results on
the modulus of conical averagedness that will be used in later proofs.

We start with a simple example showing that $T\mapsto k(T)$ is an extended nonnegative-valued function,
i.e., taking values in $[0,+\infty]$.
\begin{example}\label{e:Id}
Let $\alpha \in \RR$. Then
\begin{equation}\label{e:id:case}
k(\alpha \mathrm{Id})=\begin{cases}
\frac{1-\alpha}{2} & \text{if $\alpha \leq 1$,} \\
+\infty & \text{if $\alpha>1$.}
\end{cases}
\end{equation}
\end{example}
\begin{proof} Write $\alpha \Id=(1-\lambda)\Id+\lambda N$ with $\lambda\in [0,+\infty)$ and $N:X\rightarrow X$ being nonexpansive. If $\lambda=0$, we have $\alpha=1$ and $k(\Id)=0$, so \eqref{e:id:case} holds.
We only need to consider $\lambda>0$ case.
Now
\begin{equation}
N=\frac{\lambda+\alpha-1}{\lambda}\Id,
\end{equation}
and $N$ being nonexpansive requires
\begin{equation}\label{e:nonexp}
\left|\frac{\lambda+\alpha-1}{\lambda}\right|\leq 1.
\end{equation}
This gives $\alpha\leq 1 $ and $\lambda\geq (1-\alpha)/2$. Taking infimum over $\lambda$ yields $k(\alpha\Id)=(1-\alpha)/2$ when $\alpha\leq 1$.
If $\alpha>1$, then \eqref{e:nonexp} never holds, so $k(\alpha \Id)=\inf(\varnothing)=+\infty$.
\end{proof}

\begin{remark} Example~\ref{e:Id} implies that
\begin{enumerate}
\item
$T$ being conically averaged does not imply that $-T$ is conically averaged. Take $T=-2\Id$.
\item $T\mapsto k(T)$ is not positively homogeneous, e.g., $k(2\Id)=+\infty$, but $2k(\Id)=0$.
\end{enumerate}
\end{remark}

Rewriting \eqref{e:average2} we find that the following characterization of conical averaged mappings is more convenient to use in this paper.
\begin{lemma}\label{l:char:con}
Let $T:X\rightarrow X$ and let $k \in(0,+\infty)$. Then $T$ is conically $k$-averaged if and only if
\begin{equation}\label{e:average3}
(\forall x \in X)(\forall y \in X) \quad\|T x-T y-(x-y)\|^2 \leq 2 k
\left(\|x-y\|^2-\langle x-y, T x-T y\rangle\right).
\end{equation}
\end{lemma}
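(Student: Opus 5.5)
The plan is to show that \eqref{e:average3} is an algebraic rearrangement of \eqref{e:average2}, which \cite[Proposition 2.2]{BDP} already identifies with conical $k$-averagedness for $k\in(0,+\infty)$. To keep the argument self-contained, I will also derive it directly from the definition.

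Fix $x,y\in X$ and set $u:=x-y$ and $v:=Tx-Ty$.

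\emph{Step 1: the identity.} Expanding $\|v-u\|^2=\|v\|^2-2\scal{u}{v}+\|u\|^2$ gives
\begin{equation*}
\|v-u\|^2-2k\left(\|u\|^2-\scal{u}{v}\right)=\|v\|^2+(1-2k)\|u\|^2-2(1-k)\scal{u}{v}.
\end{equation*}
Hence \eqref{e:average3} holds for $(x,y)$ exactly when \eqref{e:average2} holds for $(x,y)$. Combined with the cited characterization, this already gives the equivalence.

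\emph{Step 2: direct verification from the definition.} Since $k>0$, define $N:=\Id+\frac{1}{k}(T-\Id)$, so that $T=(1-k)\Id+kN$. This $N$ is the only operator that can serve in the definition for this $k$. Therefore $T$ is conically $k$-averaged if and only if this particular $N$ is nonexpansive, that is, if and only if for all $x,y$
\begin{equation*}
\left\|u+\tfrac{1}{k}(v-u)\right\|^2\leq\|u\|^2 .
\end{equation*}
Expanding the left side, this reads
\begin{equation*}
\tfrac{2}{k}\scal{u}{v-u}+\tfrac{1}{k^2}\|v-u\|^2\leq 0 .
\end{equation*}
Multiplying by $k^2/2>0$ yields
\begin{equation*}
\tfrac12\|v-u\|^2\leq k\left(\|u\|^2-\scal{u}{v}\right),
\end{equation*}
which is \eqref{e:average3}. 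Every step is reversible, which proves both directions.

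\emph{Main obstacle.} There is no real difficulty here. The only point needing care is that $k>0$ is used twice. It is needed to solve uniquely for $N$, and it keeps the direction of the inequality unchanged when multiplying by $k^2/2$.
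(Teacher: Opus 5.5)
Your proposal is correct, and your Step 1 is exactly the paper's argument: the paper simply rewrites \eqref{e:average2} from \cite[Proposition 2.2]{BDP}, and your identity is that rewriting. Your Step 2 goes further by deriving the equivalence directly from the definition through the unique $N=\Id+\frac{1}{k}(T-\Id)$; this makes the lemma self-contained and independent of the cited characterization, and your algebra there is correct.
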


For any operator $T: X \rightarrow X$ and any $v \in X$, the operator $T+v$ is defined by
$$
(\forall x \in X) \quad(T+v) x:=T x+v.
$$
The following result extends \cite[Propositions 2.1, 2.2 and 2.3]{SW} from averaged mappings to conically averaged
mappings. We omit the proofs, because they are similar to those in \cite{SW}.
\begin{lemma}\emph{}\label{zero case}
Let $T: X \rightarrow X$ be conically averaged. Then the following hold:
\begin{enumerate}
\item For any $v \in X$, $k(T+v)=k(T)$ and $k(T(\cdot+v))=k(T)$.
\item If $k(T)>0$, then $T$ is conically $k(T)$-averaged.
\item $k(T)=0$ if and only if $T=\operatorname{Id}+v$ for some $v \in X$.
\end{enumerate}
\end{lemma}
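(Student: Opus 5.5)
The plan is to work entirely through the characterization in Lemma~\ref{l:char:con}. For $k>0$, conical $k$-averagedness is a family of inequalities indexed by pairs $(x,y)$. Write $A_T(x,y):=\|Tx-Ty-(x-y)\|^2$ and $B_T(x,y):=\|x-y\|^2-\scal{x-y}{Tx-Ty}$. Then $T$ is conically $k$-averaged iff $A_T\le 2kB_T$ pointwise. We also use that conical $k_0$-averagedness implies conical $k$-averagedness for all $k\ge k_0$. Consequently the set $K(T):=\{k\ge 0 \mid T \text{ conically } k\text{-averaged}\}$ is an interval with left endpoint $k(T)$.

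\textbf{Item (i).} For $S=T+v$ we have $Sx-Sy=Tx-Ty$. Hence $A_S=A_T$ and $B_S=B_T$, so for every $k>0$, $S$ is conically $k$-averaged iff $T$ is. For $k=0$ the only conically $0$-averaged operator is $\Id$. We must check that $0\in K(S)$ iff $0\in K(T)$ still gives equal infima. If $T=\Id$, then $S=\Id+v$ satisfies $A_S=0\le 2kB_S$ for every $k>0$, so $k(S)=0$. The converse is symmetric since $T=S-v$.

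For $R=T(\cdot+v)$, substitute $x'=x+v$ and $y'=y+v$. Then $x-y=x'-y'$ and $Rx-Ry=Tx'-Ty'$, so $A_R(x,y)=A_T(x',y')$ and $B_R(x,y)=B_T(x',y')$. Since $(x,y)\mapsto(x',y')$ is a bijection of $X\times X$, the inequality families coincide for all $k>0$. The $k=0$ boundary case is handled as before, noting $\Id(\cdot+v)=\Id+v$. Therefore $K(R)\cap(0,\infty)=K(T)\cap(0,\infty)$, and the infima agree.

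\textbf{Item (ii).} Suppose $k(T)>0$. Then there are $k_n\downarrow k(T)$ with $k_n\in K(T)$, all positive. For each fixed pair $(x,y)$ we have $A_T(x,y)\le 2k_nB_T(x,y)$. Letting $n\to\infty$ gives $A_T\le 2k(T)B_T$, and Lemma~\ref{l:char:con} then says $T$ is conically $k(T)$-averaged. The set of admissible $k>0$ is closed because it is an intersection of closed half-lines in $k$.

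\textbf{Item (iii).} If $T=\Id+v$, then $k(T)=0$ by item (i) and Example~\ref{e:Id}. Conversely, suppose $k(T)=0$. Either $T=\Id$ is itself conically $0$-averaged, or $T$ is conically $k$-averaged for arbitrarily small $k>0$. In the latter case, fix $(x,y)$ and let $k\to0$. The inequality $A_T\le 2kB_T$ with $B_T(x,y)$ finite forces $A_T(x,y)=0$, that is, $Tx-x=Ty-y$ for all $x,y$. Hence $\Id-T$ is constant, equal to $-v$ for some $v$, and so $T=\Id+v$.

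The main obstacle, such as it is, is the bookkeeping at $k=0$. Lemma~\ref{l:char:con} only covers $k>0$, while conical $0$-averagedness means exactly $T=\Id$. The infimum must therefore be handled through positive $k$ rather than through $k=0$ membership. This is why item (iii) produces $\Id+v$ rather than only $\Id$.
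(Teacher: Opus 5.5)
Your proof is correct and takes essentially the approach the paper intends: the paper omits the proof as ``similar to'' \cite{SW}, and your argument (reduce to the pairwise inequality of Lemma~\ref{l:char:con}, use that the admissible constants form a nonempty upward-closed set, and pass to the limit in $k$ for each fixed $(x,y)$) is the same limit-taking device the paper uses in Corollary~\ref{c:char:con} and Corollary~\ref{c:negative:t}. Your separate $k=0$ bookkeeping in item (i) could be shortened by noting that upward closedness gives $k(T)=\inf\bigl(K(T)\cap(0,+\infty)\bigr)$ whenever $K(T)\neq\varnothing$, but as written it is sound.
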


\begin{corollary}\label{c:char:con}
Let $T:X\to X$ and let $K\in[0,+\infty)$. Suppose that
$$
(\forall x \in X)(\forall y \in X)\quad
\|Tx-Ty-(x-y)\|^2
\le
2K\big(\|x-y\|^2-\langle x-y,Tx-Ty\rangle\big).
$$
Then $k(T)\le K$.
\end{corollary}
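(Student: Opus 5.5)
Two cases arise, $K>0$ and $K=0$, and both reduce to results already stated.

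\emph{Case $K>0$.} The hypothesis is exactly inequality \eqref{e:average3} with $k=K$. Lemma~\ref{l:char:con} then says $T$ is conically $K$-averaged. So $K$ belongs to the set whose infimum defines $k(T)$, and hence $k(T)\le K$.

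\emph{Case $K=0$.} This is the only point needing care, because Lemma~\ref{l:char:con} is stated only for $k>0$. Here the hypothesis reads
\[
\|Tx-Ty-(x-y)\|^2\le 0 \quad\text{for all } x,y\in X,
\]
so $Tx-x=Ty-y$ for all $x,y$. Hence $T-\Id$ is a constant $v\in X$, that is, $T=\Id+v$. We could now invoke Lemma~\ref{zero case}(iii), but that lemma assumes $T$ is conically averaged. It is cleaner to argue directly.

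Since $\Id$ is conically $0$-averaged (take $N=\Id$ and $k=0$ in the definition), we have $k(\Id)=0$. Applying Lemma~\ref{zero case}(i) to $\Id$, which is conically averaged, gives $k(\Id+v)=k(\Id)=0$. One can also see this by hand: $\Id+v=(1-0)\Id+0\cdot N$ fails to capture the shift, but for every $k>0$ we may write $\Id+v=(1-k)\Id+k(\Id+v/k)$, and $\Id+v/k$ is nonexpansive. Thus $k(T)=0\le K$.

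Alternatively, the case $K=0$ follows from the case $K>0$. If the inequality holds with $K=0$, it holds with every $K'>0$, because the right-hand side $2K'\big(\|x-y\|^2-\langle x-y,Tx-Ty\rangle\big)$ is nonnegative whenever $T=\Id+v$ (it then equals $0$). The case $K>0$ gives $k(T)\le K'$ for all $K'>0$, hence $k(T)\le 0=K$.
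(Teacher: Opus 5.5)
Your proof is correct and follows the paper's argument. The case $K>0$ is Lemma~\ref{l:char:con}; for $K=0$ you deduce $T=\Id+v$ and conclude $k(T)=0$. Your extra care in the case $K=0$ is a small but genuine improvement: you apply Lemma~\ref{zero case}(i) to $\Id$, or use the explicit decomposition $\Id+v=(1-k)\Id+k(\Id+v/k)$ for every $k>0$, whereas the paper cites Lemma~\ref{zero case} without first checking its hypothesis that $\Id+v$ is conically averaged.
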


\begin{proof}
If $K>0$, then $T$ is conically $K$-averaged by Lemma~\ref{l:char:con},
and hence $k(T)\le K$. If $K=0$, then
$(\forall x,y\in X)\ Tx-Ty=x-y.
$
Thus $T=\Id+v$ for some $v\in X$, and Lemma~\ref{zero case} yields $k(T)=0$.
Hence $k(T)\le K$ also in this case.
\end{proof}

Nonexpansiveness of a conically averaged mapping can be quantified by its modulus of averagedness.
\begin{fact}\label{f:one:lip} Let $T: X \rightarrow X$ be conically averaged.
Then $T$ is nonexpansive if and only if $k(T)\leq 1$.
\end{fact}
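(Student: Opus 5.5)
We prove the two implications separately, working with the characterization \eqref{e:average1} and with the monotonicity of conical averagedness in $k$ noted after \eqref{e:average2}.

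\emph{Sufficiency.} Suppose $k(T)\le 1$. If $k(T)=0$, then Lemma~\ref{zero case} gives $T=\Id+v$, which is clearly nonexpansive. If $0<k(T)\le 1$, then $T$ is conically $k(T)$-averaged by Lemma~\ref{zero case}. Since a conically $k_0$-averaged operator is conically $k$-averaged for every $k\ge k_0$, $T$ is conically $1$-averaged. By definition this means $T=(1-1)\Id+1\cdot N=N$ for some nonexpansive $N$. Hence $T$ is nonexpansive. Alternatively, \eqref{e:average1} with $k=k(T)\le 1$ has $\frac{1-k}{k}\ge 0$, so it directly gives $\|Tx-Ty\|\le\|x-y\|$.

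\emph{Necessity.} Suppose $T$ is nonexpansive. Then $T=(1-1)\Id+1\cdot T$ exhibits $T$ as conically $1$-averaged with $N=T$. Taking the infimum in the definition of $k(T)$ gives $k(T)\le 1$.

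Both directions are short. The only point needing care is the case $k(T)=0$, where Lemma~\ref{zero case}(ii) does not apply and we use Lemma~\ref{zero case}(iii) instead. The hypothesis that $T$ is conically averaged is only needed to guarantee $k(T)<+\infty$, so that Lemma~\ref{zero case} is available.
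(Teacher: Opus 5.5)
Your proof is correct. The paper records this as a Fact without proof, and your argument is the routine verification it leaves implicit: nonexpansiveness is exactly conical $1$-averagedness, and for $k(T)\le 1$ you split into the case $k(T)=0$ (Lemma~\ref{zero case}(iii)) and the case $0<k(T)\le 1$ (attainment via Lemma~\ref{zero case}(ii) plus monotonicity in $k$, or directly \eqref{e:average1}). One small correction to your closing remark: the hypothesis that $T$ is conically averaged is not needed at all, since $k(T)\le 1$ already forces $k(T)<+\infty$, and the converse direction uses only $T=0\cdot\Id+1\cdot T$.
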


The following fact concerns the convex combination of conically averaged operators, which generalizes the result obtained by Ogura and Yamada \cite{OY}.
\begin{fact}\emph{\cite[Proposition 2.4]{BDP}}\label{BDPconvexfact}
Let $I$ be a finite index set. For each $i \in I$ let $T_i: X \rightarrow X$ be conically $\theta_i$-averaged. Let $\left\{\omega_i\right\}_{i \in I} \subseteq \mathbb{R}_{++}$ with $\sum_{i \in I} \omega_i=1$. Then $\sum_{i \in I} \omega_i T_i$ is conically $\theta$-averaged where $\theta:=\sum_{i \in I} \omega_i \theta_i$.
\end{fact}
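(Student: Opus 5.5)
We prove the statement (Fact~\ref{BDPconvexfact}) using Lemma~\ref{l:char:con}. The point is that its inequality \eqref{e:average3} is convex in the pair $(Tx-Ty,\,k)$.

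Fix $x,y\in X$ and write $u:=x-y$. For each $i\in I$ put $d_i:=(T_i x-T_i y)-u$, so that $d_i$ is the increment of $T_i-\Id$. Since $\sum_i\omega_i=1$, the operator $T:=\sum_i\omega_iT_i$ satisfies $Tx-Ty-u=\sum_i\omega_i d_i=:d$. Inequality \eqref{e:average3} rewrites as
\[
\|Tx-Ty-u\|^2\le 2k\big(\|u\|^2-\scal{u}{Tx-Ty}\big)=-2k\scal{u}{d},
\]
so conical $\theta_i$-averagedness of $T_i$ says exactly $\|d_i\|^2\le -2\theta_i\scal{u}{d_i}$ for all $x,y$. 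This holds also when $\theta_i=0$, since then $T_i=\Id$ and $d_i=0$.

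\textbf{Case $\theta=0$.} Then every $\theta_i=0$ (the weights are positive), so every $T_i=\Id$. Hence $T=\Id$, which is conically $0$-averaged.

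\textbf{Case $\theta>0$.} This is the key step. Let $I_+:=\{i\in I : \theta_i>0\}$; for $i\notin I_+$ we have $d_i=0$. Apply Cauchy--Schwarz in the weighted form
\[
\Big\|\sum_{i\in I_+}\omega_i d_i\Big\|^2\le\Big(\sum_{i\in I_+}\omega_i\theta_i\Big)\Big(\sum_{i\in I_+}\omega_i\frac{\|d_i\|^2}{\theta_i}\Big),
\]
which follows from convexity of $\|\cdot\|^2$ or from the joint convexity of the perspective $(d,t)\mapsto\|d\|^2/t$. Insert $\|d_i\|^2/\theta_i\le-2\scal{u}{d_i}$ into the right-hand side. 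This gives
\[
\|d\|^2\le\theta\cdot\Big(-2\sum_i\omega_i\scal{u}{d_i}\Big)=-2\theta\scal{u}{d},
\]
which is \eqref{e:average3} for $T$ with $k=\theta$. Lemma~\ref{l:char:con} then yields that $T$ is conically $\theta$-averaged.

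The main obstacle is spotting the weighted Cauchy--Schwarz (perspective) inequality. Everything else is a rewriting of \eqref{e:average3} in terms of the increments $d_i$. The only other care needed is excluding the indices with $\theta_i=0$, so that we never divide by zero.
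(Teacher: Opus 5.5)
The paper does not prove this statement; it imports it from \cite[Proposition~2.4]{BDP}, so there is no in-paper proof to match. Your argument is correct on its own:
\begin{itemize}
\item with $d=Tx-Ty-u$, inequality \eqref{e:average3} does rewrite as $\|d\|^2\le -2k\scal{u}{d}$;
\item the identity $Tx-Ty-u=\sum_i\omega_i d_i$ correctly uses $\sum_i\omega_i=1$;
\item the weighted Cauchy--Schwarz bound follows from $\|\sum_i\omega_i d_i\|\le\sum_i\sqrt{\omega_i\theta_i}\,\sqrt{\omega_i/\theta_i}\,\|d_i\|$;
\item the indices with $\theta_i=0$ are handled properly, since conical $0$-averagedness means $T_i=\Id$ by definition.
\end{itemize}

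Your route differs from the standard proof, which goes straight through the definition. If $\theta>0$, write $T_i=(1-\theta_i)\Id+\theta_i N_i$ with each $N_i$ nonexpansive. Then
\[
\sum_i\omega_iT_i=(1-\theta)\Id+\theta N,\qquad N:=\sum_i\frac{\omega_i\theta_i}{\theta}N_i,
\]
and $N$ is nonexpansive as a convex combination of nonexpansive maps. This is the same rescaling trick the paper itself uses in Lemma~\ref{l:minus} and Theorem~\ref{T-gammaId}.

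The direct route is two lines and needs only the triangle inequality. It exhibits the nonexpansive part explicitly, and terms with $\theta_i=0$ simply drop out. It also does not depend on Lemma~\ref{l:char:con}, which the paper imports without proof.

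Your route works purely at the level of the inequality characterization, pair by pair, without ever constructing $N$. It exposes the underlying convex structure: for fixed $(x,y)$, the constraint $\|d\|^2\le -2k\scal{u}{d}$ is jointly convex in $(d,k)$, being the perspective of $\|\cdot\|^2$ plus a linear term. That is exactly the mechanism behind the convexity of $k(\cdot)$ in Proposition~\ref{newOYprop} and Corollary~\ref{c:klsc}. There is no circularity in using Lemma~\ref{l:char:con} here, since that characterization does not rely on the convex-combination result.
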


\begin{proposition}\label{newOYprop}
Let $T_i: X \rightarrow X$ be conically averaged for $i=1, 2$ and $\lambda \in [0, 1]$. Then $$k\left((1-\lambda) T_1+\lambda T_2\right) \leq(1-\lambda) k\left(T_1\right)+\lambda k\left(T_2\right).$$
\end{proposition}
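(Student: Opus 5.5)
The plan is to combine Fact~\ref{BDPconvexfact} with the attainment property in Lemma~\ref{zero case}(ii), and to handle the degenerate cases where a modulus is zero or $\lambda\in\{0,1\}$ separately.

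First, if $\lambda=0$ or $\lambda=1$, the combination is just $T_1$ or $T_2$, and the inequality is an equality. So I assume $\lambda\in(0,1)$, which makes both weights $\omega_1=1-\lambda$ and $\omega_2=\lambda$ strictly positive, as Fact~\ref{BDPconvexfact} requires.

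Next I argue that each $T_i$ is conically $k(T_i)$-averaged, with the convention that conically $0$-averaged means $\Id$ (here up to a translation). If $k(T_i)>0$, this is Lemma~\ref{zero case}(ii). If $k(T_i)=0$, Lemma~\ref{zero case}(iii) gives $T_i=\Id+v_i$, but this is not literally conically $0$-averaged unless $v_i=0$.

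The cleanest way around this is to avoid the definition and work with the inequality in Lemma~\ref{l:char:con}. For each $i$, set $k_i:=k(T_i)$ and note that
$$\|T_ix-T_iy-(x-y)\|^2\le 2k_i\big(\|x-y\|^2-\scal{x-y}{T_ix-T_iy}\big)\quad\text{for all } x,y.$$
This holds for $k_i>0$ by Lemma~\ref{zero case}(ii) and Lemma~\ref{l:char:con}. It holds for $k_i=0$ because then $T_ix-T_iy=x-y$, so both sides vanish.

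Now fix $x,y$ and set $u:=x-y$, $a_i:=T_ix-T_iy-u$ and $T:=(1-\lambda)T_1+\lambda T_2$. Then $Tx-Ty-u=(1-\lambda)a_1+\lambda a_2$. The right-hand quantity for $T_i$ is $2k_i(-\scal{u}{a_i})$, so each inequality reads $\|a_i\|^2\le -2k_i\scal{u}{a_i}$. In particular $-\scal{u}{a_i}\ge0$.

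The target is
$$\|(1-\lambda)a_1+\lambda a_2\|^2\le -2K\scal{u}{(1-\lambda)a_1+\lambda a_2},\qquad K:=(1-\lambda)k_1+\lambda k_2,$$
after which Corollary~\ref{c:char:con} gives $k(T)\le K$.

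This inequality is the main step, and it is where the argument of Fact~\ref{BDPconvexfact} is reproduced. I plan to use the weighted Cauchy--Schwarz (Jensen/Sedrakyan) inequality: for $k_1,k_2>0$,
$$\|(1-\lambda)a_1+\lambda a_2\|^2=\Big\|(1-\lambda)\sqrt{k_1}\,\frac{a_1}{\sqrt{k_1}}+\lambda\sqrt{k_2}\,\frac{a_2}{\sqrt{k_2}}\Big\|^2\le K\Big((1-\lambda)\frac{\|a_1\|^2}{k_1}+\lambda\frac{\|a_2\|^2}{k_2}\Big).$$
Substituting $\|a_i\|^2/k_i\le -2\scal{u}{a_i}$ then gives exactly $-2K\scal{u}{(1-\lambda)a_1+\lambda a_2}$.

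If some $k_i=0$, then $a_i=0$ and the inequality reduces to the single-operator case, with $K$ replaced by a larger quantity. More precisely, if $k_1=0$, the target becomes $\lambda^2\|a_2\|^2\le -2\lambda K\scal{u}{a_2}$ with $K=\lambda k_2$. This follows from $\lambda^2\|a_2\|^2\le -2\lambda^2 k_2\scal{u}{a_2}$, and the case $k_2=0$ is symmetric. If both are zero, $T$ is a translation of $\Id$ and both sides vanish.

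The only delicate point is this bookkeeping of zero moduli. Once the convexity estimate for $\|\cdot\|^2$ is set up, the rest is mechanical.
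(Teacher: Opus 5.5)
Your proof is correct, but it takes a different route from the paper. The paper's entire proof is ``Apply Fact~\ref{BDPconvexfact}'', implicitly with $\theta_i=k(T_i)$ and the attainment in Lemma~\ref{zero case}(ii).

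You never invoke that fact. You rewrite \eqref{e:average3} as $\|a_i\|^2\le -2k_i\scal{u}{a_i}$, note that the combination has defect $(1-\lambda)a_1+\lambda a_2$, and close with the weighted Cauchy--Schwarz bound $\|\sum_i\omega_ia_i\|^2\le(\sum_i\omega_ik_i)\sum_i\omega_i\|a_i\|^2/k_i$ followed by Corollary~\ref{c:char:con}.

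What your route buys is a self-contained argument that is rigorous exactly where the one-line citation is silent:
\begin{enumerate}
\item Fact~\ref{BDPconvexfact} requires strictly positive weights, so $\lambda\in\{0,1\}$ has to be set aside, as you do.
\item If $k(T_i)=0$ with $T_i=\Id+v_i$ and $v_i\neq0$, the infimum defining $k(T_i)$ is not attained. Then $T_i$ is not conically $0$-averaged, and $\theta_i=0$ cannot be fed into the fact. Your use of Corollary~\ref{c:char:con}, which also covers $K=0$, avoids this.
\end{enumerate}

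The citation route is easily repaired. Take any $\theta_i>k(T_i)$; $T_i$ is then conically $\theta_i$-averaged by the monotonicity remark following \eqref{e:average2}. Apply the fact and let $\theta_i\downarrow k(T_i)$. That version is shorter and extends verbatim to finitely many operators. Yours amounts to reproving the two-operator case of the Bartz--Dao--Phan estimate directly.

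One small wording slip: when $k_1=0$ the target is exactly $\lambda^2$ times the $T_2$ inequality. It is not the single-operator case ``with $K$ replaced by a larger quantity'', though your displayed computation there is right.
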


\begin{proof}
Apply Fact \ref{BDPconvexfact}.
\end{proof}

\begin{corollary}\label{scalarmul}
Let $T: X \rightarrow X$ be conically averaged and let $\alpha \in[0,1]$. Then $k(\alpha T) \leq \alpha k(T)+(1-\alpha) / 2 .$
\end{corollary}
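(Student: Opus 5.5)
The plan is to write $\alpha T$ as a convex combination of two conically averaged mappings and then invoke Proposition~\ref{newOYprop}. The natural decomposition is
\[
\alpha T=\alpha T+(1-\alpha)\,0,
\]
where $0$ denotes the zero mapping on $X$. We have $0=0\cdot\Id$, so Example~\ref{e:Id} with $\alpha=0$ gives $k(0)=\tfrac{1}{2}$, and in particular the zero mapping is conically averaged. By hypothesis $T$ is conically averaged, so both terms meet the assumptions of Proposition~\ref{newOYprop}.

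Next, apply Proposition~\ref{newOYprop} with $T_1:=0$, $T_2:=T$ and $\lambda:=\alpha\in[0,1]$. This gives
\[
k(\alpha T)=k\big((1-\alpha)0+\alpha T\big)\le (1-\alpha)k(0)+\alpha k(T)=\alpha k(T)+\frac{1-\alpha}{2},
\]
which is exactly the claimed inequality.

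The only point needing care is the endpoint cases, since Fact~\ref{BDPconvexfact} (which underlies Proposition~\ref{newOYprop}) requires strictly positive weights. If the proof of Proposition~\ref{newOYprop} does not already cover them, I will check them directly. For $\alpha=0$ the claim reads $k(0)\le \tfrac12$, which is Example~\ref{e:Id}. For $\alpha=1$ it reads $k(T)\le k(T)$, which is trivial. There is also a boundary subtlety when $k(T)=0$ or $k(0)$ is not attained. Lemma~\ref{zero case}(ii) shows that a positive modulus is attained. When $k(T)=0$, Lemma~\ref{zero case}(iii) gives $T=\Id+v$, and the estimate follows by applying the argument with $k$ replaced by an arbitrary $\varepsilon>0$ and letting $\varepsilon\downarrow 0$. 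I expect no real obstacle beyond this bookkeeping.
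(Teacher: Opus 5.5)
Your proposal is correct and is essentially the paper's proof: the paper also takes $T_1=0$ in Proposition~\ref{newOYprop} and uses $k(0)=1/2$, which you correctly read off from Example~\ref{e:Id} with $\alpha=0$. Your extra care is sound bookkeeping that the paper leaves implicit inside Proposition~\ref{newOYprop}: this covers the endpoint weights, and the case $k(T)=0$, where the infimum is not attained for $T=\Id+v$ with $v\neq 0$.
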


\begin{proof}
Let $T_1=0$ (the zero mapping) in Proposition \ref{newOYprop} and apply the fact that $k(0)=1/2$ (see \cite[Example 2.5]{SW}).
\end{proof}

\begin{remark}
\begin{enumerate}
\item Corollary~\ref{scalarmul} fails for $\alpha>1$. Take $\alpha=2, T=\Id$.
Then $k(2\Id)=+\infty$, but $2 k(\Id)+(-1/2)=2\cdot 0-1/2=-1/2$.
\item
Corollary~\ref{scalarmul} is sharp. Take $\alpha=0$. Then $k(0T)=k(0)=1/2= 0k(T)+1/2$.
\end{enumerate}
\end{remark}

Next is a basic operation that preserves conical averagedness.
\begin{lemma}\label{l:minus}
Let $T: X \rightarrow X$ be conically averaged. Then
$T-\varepsilon\Id$ is conically averaged for every $\varepsilon>0$ and
$k(T-\varepsilon \Id) \leq k(T)+\varepsilon$.
\end{lemma}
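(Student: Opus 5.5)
The plan is to verify the inequality of Corollary~\ref{c:char:con} directly for $S:=T-\varepsilon\Id$ with the constant $K:=k(T)+\varepsilon$. That corollary then gives both that $S$ is conically averaged (because $K>0$) and that $k(S)\le k(T)+\varepsilon$.

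First I fix $k:=k(T)$ and settle which inequality $T$ satisfies at this exact value.
\begin{itemize}
\item If $k>0$, Lemma~\ref{zero case}(ii) says $T$ is conically $k$-averaged. Lemma~\ref{l:char:con} then gives, with $u:=x-y$ and $w:=u-(Tx-Ty)$,
\begin{equation*}
\|w\|^2\le 2k\langle u,w\rangle .
\end{equation*}
This form holds because $\|u\|^2-\langle u,Tx-Ty\rangle=\langle u,w\rangle$.
\item If $k=0$, Lemma~\ref{zero case}(iii) gives $T=\Id+v$. Hence $w=0$ and the same inequality holds trivially.
\end{itemize}
In both cases we get $\|w\|^2\le 2k\langle u,w\rangle$ for all $x,y\in X$, and in particular $\langle u,w\rangle\ge 0$.

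Next I rewrite both sides of the target inequality for $S$. We have $Sx-Sy-(x-y)=-(w+\varepsilon u)$. We also have $\|u\|^2-\langle u,Sx-Sy\rangle=\langle u,w\rangle+\varepsilon\|u\|^2$. So the goal is
\begin{equation*}
\|w\|^2+2\varepsilon\langle w,u\rangle+\varepsilon^2\|u\|^2\le 2(k+\varepsilon)\big(\langle u,w\rangle+\varepsilon\|u\|^2\big).
\end{equation*}

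To prove it, I bound the terms on the left one at a time.
\begin{itemize}
\item By the previous step, $\|w\|^2\le 2k\langle u,w\rangle$. So the first two terms together are at most $2(k+\varepsilon)\langle u,w\rangle$.
\item For the last term, $\varepsilon^2\le 2\varepsilon(k+\varepsilon)$ because $k\ge 0$. So $\varepsilon^2\|u\|^2\le 2(k+\varepsilon)\varepsilon\|u\|^2$.
\end{itemize}
Adding these two bounds gives the goal. Corollary~\ref{c:char:con} with $K=k+\varepsilon>0$ then finishes the proof.

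The only delicate point is ensuring we may use the inequality at the exact value $k=k(T)$ rather than at $k(T)+\delta$. This is handled by the case split above using Lemma~\ref{zero case}. Alternatively, one could work with $k(T)+\delta$ and let $\delta\downarrow 0$ at the end.
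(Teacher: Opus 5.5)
Your proof is correct, but it takes a different route from the paper's. The paper works directly with the representation. For any $\alpha$ with $T=(1-\alpha)\Id+\alpha N$, it rewrites $T-\varepsilon\Id=[1-(\alpha+\varepsilon)]\Id+(\alpha+\varepsilon)\bigl(\tfrac{\alpha}{\alpha+\varepsilon}N\bigr)$ and notes that $\tfrac{\alpha}{\alpha+\varepsilon}N$ is nonexpansive. It then takes the infimum over $\alpha$. That argument is shorter. It needs neither Lemma~\ref{l:char:con} nor Corollary~\ref{c:char:con}, and it needs no attainment of the modulus via Lemma~\ref{zero case}, since it never evaluates at $\alpha=k(T)$ itself.

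Your route through the quadratic characterization is more computational, but it makes the slack visible: the step $\varepsilon^2\le 2\varepsilon(k+\varepsilon)$ is wasteful. Take instead $K=k+\varepsilon/2$. Then the right side minus the left side is $\bigl(2k\langle u,w\rangle-\|w\|^2\bigr)+\varepsilon\bigl(2k\|u\|^2-\langle u,w\rangle\bigr)$. The first bracket is nonnegative by hypothesis. The second is nonnegative because $\|w\|^2\le 2k\langle u,w\rangle\le 2k\|u\|\|w\|$ gives $\|w\|\le 2k\|u\|$, hence $\langle u,w\rangle\le 2k\|u\|^2$. So one extra Cauchy--Schwarz step in your framework already gives the sharper bound $k(T-\varepsilon\Id)\le k(T)+\varepsilon/2$ of Theorem~\ref{T-gammaId}. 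The paper obtains that bound separately, using a different rescaling of $N$.
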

\begin{proof} By the assumption, $T=(1-\alpha)\Id+\alpha N$ for some $\alpha\in [0, +\infty)$ and a nonexpansive mapping
$N:X\rightarrow X$.
For every $\varepsilon>0$, we have
$$
\begin{aligned}
T-\varepsilon \Id & =(1-\alpha) \Id+\alpha N-\varepsilon \Id \\
& =(1-\alpha-\varepsilon) \Id+\alpha N \\
& =[1-(\alpha+\varepsilon)] \Id+\alpha N \\
& =[1-(\alpha +\varepsilon)] \Id+(\alpha+\varepsilon) \cdot\left(\frac{\alpha}{\alpha+\varepsilon} N\right),
\end{aligned}
$$
which implies that $T-\varepsilon \mathrm{Id}$ is conically $(\alpha+\varepsilon)$-averaged, i.e.,
$k(T-\varepsilon\Id)\leq \alpha+\varepsilon$. Thus, $k(T-\varepsilon \Id) \leq k(T)+\varepsilon$ by taking infimum over $\alpha\geq k(T)$.
\end{proof}


Finally, we recall that if $f\le g$, then $\varliminf f\le \varliminf g$ and $\varlimsup f\le \varlimsup g$. Moreover, we have the following classical analysis result on the limit inferior and limit superior of functions.
\begin{fact}\label{f:l:supinf}
Let $f, g:\RR\rightarrow\RR$ and $\alpha_{0}\in\RR$. Suppose that $\lim _{\alpha \rightarrow \alpha_0} g(\alpha)=\beta$ exists.
\begin{enumerate}
\item If $\varliminf_{\alpha \rightarrow \alpha_0} f(\alpha)\in \RR$, then
$\varliminf_{\alpha\rightarrow\alpha_{0}} (f(\alpha)g(\alpha)) \leq \varliminf_{\alpha\rightarrow\alpha_{0}} f(\alpha)\cdot \beta.$
\item If $\varlimsup_{\alpha \rightarrow \alpha_0} f(\alpha)\in \RR$, then
$\varlimsup_{\alpha\rightarrow\alpha_{0}} (f(\alpha)g(\alpha)) \geq \varlimsup_{\alpha\rightarrow\alpha_{0}} f(\alpha)\cdot \beta.$
\end{enumerate}
\end{fact}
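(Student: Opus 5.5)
This is a classical fact about limits, and I would prove both parts directly from the sequential characterization of $\varliminf$ and $\varlimsup$. Part (ii) will then follow from part (i) by passing to $-f$.

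For (i), set $L:=\varliminf_{\alpha\to\alpha_0} f(\alpha)\in\RR$. First I would choose a sequence $\alpha_n\to\alpha_0$ with $\alpha_n\neq\alpha_0$ and $f(\alpha_n)\to L$. Such a sequence exists because $L$ is finite, by the definition
$$L=\lim_{\delta\downarrow 0}\inf\menge{f(\alpha)}{0<|\alpha-\alpha_0|<\delta}.$$
Along this sequence $g(\alpha_n)\to\beta$, so $f(\alpha_n)g(\alpha_n)\to L\beta$, since a product of two convergent real sequences converges to the product of the limits. The lower limit of a function is at most the limit of the function along any admissible sequence tending to $\alpha_0$. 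Hence
$$\varliminf_{\alpha\to\alpha_0}\big(f(\alpha)g(\alpha)\big)\le \lim_n f(\alpha_n)g(\alpha_n)=L\beta,$$
which is (i).

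For (ii), I would apply (i) to $-f$, which is legitimate because $\varliminf(-f)=-\varlimsup f\in\RR$. This gives
$$\varliminf\big(-fg\big)\le -\beta\varlimsup f,$$
and negating both sides yields
$$\varlimsup (fg)\ge \beta\varlimsup f.$$
Alternatively, one can rerun the argument of (i) with a sequence realizing the $\varlimsup$.

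The only step needing care is the existence of a sequence realizing the lower limit. This is where finiteness of $\varliminf f$ is used: for each $n$, the infimum over the punctured neighbourhood $0<|\alpha-\alpha_0|<1/n$ lies within $1/n$ of $L$ once $n$ is large, so I can pick $\alpha_n$ in that neighbourhood with $f(\alpha_n)$ within $2/n$ of $L$. Beyond this there is no real obstacle. The inequalities, rather than equalities, come from the fact that $fg$ may attain smaller (respectively larger) values along other sequences.
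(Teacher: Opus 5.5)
Your proof is correct. The paper gives no proof of this statement (it is quoted as a classical analysis fact), so there is no competing route to compare with. Reducing (ii) to (i) via $-f$ is valid, since $\varliminf(-f)=-\varlimsup f$ and $\varliminf(-fg)=-\varlimsup(fg)$ hold in the extended reals.

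One justification in your last paragraph is false as stated, although the conclusion it supports is true. Set $m(\delta):=\inf\{f(\alpha) : 0<|\alpha-\alpha_0|<\delta\}$. It is not true in general that $m(1/n)$ lies within $1/n$ of $L$ for large $n$. For example, with $\alpha_0=0$ and $f(\alpha)=-\sqrt{|\alpha|}$ one has $L=0$ but $m(1/n)=-1/\sqrt{n}$.

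You do not need any rate of convergence. Since $m(1/n)$ increases to $L$ and is finite for large $n$, pick $\alpha_n$ with $0<|\alpha_n-\alpha_0|<1/n$ and $m(1/n)\le f(\alpha_n)<m(1/n)+1/n$. Then $f(\alpha_n)\to L$.

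Relatedly, finiteness of $L$ (and of $\beta$) is not what guarantees a realizing sequence; such sequences exist for $L=\pm\infty$ too. What finiteness actually buys is that $f(\alpha_n)g(\alpha_n)\to L\beta$ is a product of convergent real sequences with a well-defined limit.

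Finally, the paper only uses this fact with one-sided limits ($\varepsilon\to 0^+$ and $\alpha\to 1^-$). Your argument carries over verbatim with one-sided punctured neighbourhoods.
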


\section{Conical averaged mapping and generalized monotonicity}\label{s:monotonev}

Let $A: X \rightrightarrows X$ be a set-valued mapping.
Recall that the graph of $A$ is $\gra A :=\{(x, u) \in X \times X \mid u \in A x\}$ and
the inverse of $A$, denoted by $A^{-1}$, is the operator with
graph $\operatorname{gra} A^{-1} :=\{(u, x) \in X \times X \mid u \in A x\}$. The domain of $A$ is $\operatorname{dom} A :=\{x \in X \mid A x \neq \varnothing\}$. The resolvent and reflected resolvent of $A$ are $J_A :=(\mathrm{Id}+A)^{-1}$ and
$R_{A}:=2J_{A}-\Id$, respectively. For $\mu \in \RR$, the Yosida $\mu$-regularization of $A$ is the operator $Y_\mu(A):=\left(\mu \mathrm{Id}+A^{-1}\right)^{-1}$. $A$ is monotone, if
$$
(\forall(x, u)\in \gra A)(\forall (y, v) \in \operatorname{gra} A) \quad\langle x-y, u-v\rangle \geq 0.
$$
$A$ is maximally monotone, if it is monotone and there is no monotone operator $B: X \rightrightarrows X$ such that $\gra B$ properly contains $\gra A$. For more information on monotone mappings, see \cite{BC,cegielski,RW}.
Conical averaged mappings are closely related to generalized monotone mappings defined below.


\subsection{Generalized monotonicity}

Recall the following definitions of generalized monotone mappings; see, e.g., \cite{BDP},
\cite[Definition 2.4]{BMW},  \cite[Definition 1.2]{iusem23}, \cite{RY}, and \cite{XH}.

\begin{definition}
Let $A: X \rightrightarrows X$ and let $\rho \in \mathbb{R}$. We define the following notions:
\begin{enumerate}
\item
$A$ is $\rho$-monotone if $(\forall(x, u) \in \operatorname{gra} A)(\forall(y, v) \in \operatorname{gra} A)$ we have
\begin{equation}\label{e:rho:mono}
\langle x-y, u-v\rangle \geq \rho\|x-y\|^2 .
\end{equation}
\item $A$ is maximally $\rho$-monotone if $A$ is $\rho$-monotone and there is no $\rho$-monotone operator $B: X \rightrightarrows X$ such that $\gra B$ properly contains $\gra A$, i.e., for every $(x, u) \in X \times X$,
$$
(x, u) \in \operatorname{gra} A \Leftrightarrow [(\forall(y, v) \in \operatorname{gra} A)\ \langle x-y, u-v\rangle \geq \rho\|x-y\|^2].
$$
\item $A$ is $\rho$-comonotone if $(\forall(x, u) \in \operatorname{gra} A)(\forall(y, v) \in \operatorname{gra}A)$ we have
$$
\langle x-y, u-v\rangle \geq \rho\|u-v\|^2 .
$$
\item $A$ is maximally $\rho$-comonotone if $A$ is $\rho$-comonotone and there is no $\rho$-comonotone operator $B: X \rightrightarrows X$ such that $\gra B$ properly contains $\gra A$, i.e., for every $(x, u) \in X \times X$,
$$
(x, u) \in \operatorname{gra} A \Leftrightarrow [(\forall(y, v) \in \operatorname{gra} A)\ \langle x-y, u-v\rangle \geq \rho\|u-v\|^2].
$$
\end{enumerate}
\end{definition}

Note that when $\rho>0$, a $\rho$-comonotone operator $A$ is at most single-valued, and $\rho$-comonotonicity reduces to $\rho$-cocoercivity on a subset of $X$. If the $\rho$-comonotonicity is maximal, this reduction holds on the entire space $X$, allowing us to write $A: X \rightarrow X$. This follows from the following result, which generalizes
\cite[Theorem 15, page 221]{RY} from $\RR^n$ to Hilbert spaces via Minty's Theorem \cite[Theorem 21.1]{BC}.

\begin{lemma}\label{maxcomonofact}
Let $A: X \rightrightarrows X$ be $\rho$-comonotone with $\rho>0$. Then $A$ is maximally $\rho$-comonotone if and only if $\operatorname{dom}A=X$.
\end{lemma}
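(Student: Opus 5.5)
The idea is to reduce the statement to the corresponding fact about maximally monotone operators. Given $A$ $\rho$-comonotone with $\rho>0$, the relation $\langle x-y,u-v\rangle\ge\rho\|u-v\|^2$ on $\gra A$ says exactly that $A^{-1}-\rho\Id$ is monotone. Indeed, $(u,x)\in\gra A^{-1}$ gives $(u,x-\rho u)\in\gra(A^{-1}-\rho\Id)$, and
\[
\langle u-v,(x-\rho u)-(y-\rho v)\rangle=\langle u-v,x-y\rangle-\rho\|u-v\|^2 .
\]
This correspondence $\gra A\leftrightarrow\gra B$, with $B:=A^{-1}-\rho\Id$, is a bijection preserving inclusion of graphs. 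Applying the same identity to an arbitrary pair $(x,u)\in X\times X$, one sees that $A$ is maximally $\rho$-comonotone if and only if $B$ is maximally monotone.

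By Minty's Theorem \cite[Theorem 21.1]{BC}, $B$ is maximally monotone if and only if $\operatorname{ran}(\Id+B)=X$, provided $B$ is monotone, which it is. Since $\rho>0$, we can use $\rho^{-1}\Id$ in place of $\Id$: $B$ is maximally monotone iff $\rho^{-1}B$ is maximally monotone iff $\operatorname{ran}(\Id+\rho^{-1}B)=X$ iff $\operatorname{ran}(\rho\Id+B)=X$.

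It remains to compute this range:
\[
\rho\Id+B=\rho\Id+A^{-1}-\rho\Id=A^{-1},
\]
so $\operatorname{ran}(\rho\Id+B)=\operatorname{ran}A^{-1}=\dom A$. Chaining the equivalences gives that $A$ is maximally $\rho$-comonotone if and only if $\dom A=X$.

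The main point requiring care is the first step, namely that maximality transfers exactly under the map $(x,u)\mapsto(u,x-\rho u)$. This holds because the map is an invertible linear transformation of $X\times X$, and the defining inequalities correspond term by term. Hence an operator $C$ is $\rho$-comonotone and extends $A$ iff its image $C^{-1}-\rho\Id$ is monotone and extends $B$.

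Finally, note that comonotonicity with $\rho>0$ forces $A$ to be at most single-valued: taking $x=y$ gives $0\ge\rho\|u-v\|^2$. Combined with the lemma, a maximally $\rho$-comonotone $A$ is therefore a genuine mapping $X\to X$, consistent with the remark preceding the statement.
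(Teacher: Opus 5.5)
Your proof is correct. It follows the route the paper indicates: the paper gives no detailed argument, only the remark that the lemma extends the finite-dimensional result to Hilbert spaces via Minty's Theorem. Your reduction to maximal monotonicity of $B=A^{-1}-\rho\Id$, followed by the scaled Minty criterion $\operatorname{ran}(\rho\Id+B)=\operatorname{ran}A^{-1}=\dom A$, fills in exactly that argument.
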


Observe that if $A$ is $\rho_0$-(co)monotone with $\rho_0 \in \RR$, then it is $\rho$-(co)monotone for any $\rho \leq \rho_0$. This motivates us to define the following values for a set-valued mapping.

\begin{definition}\emph{(\textbf{monotone value and comonotone value})}\label{d:mono:val}
Let $A: X \rightrightarrows X$. Then the monotone value of $A$ is defined by
$$
m(A):=\sup \{\rho \in \mathbb{R} \mid A \text { is } \rho \text {-monotone }\}.
$$
The comonotone value of $A$ is defined by
$$
c(A):=\sup \{\rho \in \mathbb{R} \mid A \text { is } \rho \text {-comonotone }\} .
$$
\end{definition}

\begin{remark}
When $A$ is maximally monotone, $m(A)$ coincides with the monotone value defined in \cite[Definition 6.1]{SW}, and $c(A)$ coincides with the cocoercive value \cite[Definition 6.2]{SW} in view of Lemma~\ref{maxcomonofact}.
\end{remark}

Basic properties of the monotone and comonotone values come as follows.

\begin{proposition}\label{bigpropforcm}
Let $A, B: X \rightrightarrows X$ with $m(A), m(B), c(A), c(B)>-\infty$. Let $\alpha > 0$ and $\mu \in \RR$. Then
the following hold:
\begin{enumerate}
\item\label{i:dualthing}
\emph{(\textbf{duality})} $c(A)=m\left(A^{-1}\right)$ and $c\left(A^{-1}\right)=m(A)$.
\item \label{i:mono2} $A$ is monotone if and only if $m(A) \geq 0$, if and only if $c(A) \geq 0$.
\item \label{i:mono3} $m(\alpha A)=\alpha m(A)$ and $c(\alpha A)=\alpha^{-1} c(A)$.
\item \label{i:mono4} $m(A+B) \geq m(A)+m(B)$ and $$c\left(\left(A^{-1}+B^{-1}\right)^{-1}\right) \geq c(A)+c(B).$$
\item \label{i:mono5}
$m(A+\mu \Id)=m(A)+\mu$ and $c\left(Y_\mu(A)\right)=c(A)+\mu.$
\end{enumerate}
\end{proposition}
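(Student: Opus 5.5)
All five items will be proved directly from Definition~\ref{d:mono:val}, using the fact that the sets
$M_A:=\{\rho\mid A \text{ is } \rho\text{-monotone}\}$ and $C_A:=\{\rho\mid A\text{ is }\rho\text{-comonotone}\}$
are down-closed intervals. Each identity for values therefore reduces to an identity between these sets, and each inequality to an inclusion.

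\emph{Duality.} Swapping the roles of $x$ and $u$ maps $\gra A$ onto $\gra A^{-1}$. Under this swap the inequality $\langle x-y,u-v\rangle\ge\rho\|u-v\|^2$ for $A$ becomes exactly the $\rho$-monotonicity inequality for $A^{-1}$. Hence $C_A=M_{A^{-1}}$, and so $c(A)=m(A^{-1})$. Applying this to $A^{-1}$, using $(A^{-1})^{-1}=A$, gives $c(A^{-1})=m(A)$.

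\emph{Item \ref{i:mono2}.} If $A$ is monotone then $0\in M_A$, so $m(A)\ge0$. Conversely, suppose $m(A)\ge 0$. Then $A$ is $\rho$-monotone for every $\rho<0$, so $\langle x-y,u-v\rangle\ge\rho\|x-y\|^2$ for all such $\rho$; letting $\rho\uparrow0$ gives monotonicity. The comonotone statement is identical, with $\|u-v\|^2$ in place of $\|x-y\|^2$. Alternatively, it follows from duality, since $A$ is monotone iff $A^{-1}$ is.

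\emph{Item \ref{i:mono3}.} We have $\gra(\alpha A)=\{(x,\alpha u)\}$. Dividing the inequality by $\alpha>0$ shows that $A$ is $\rho$-monotone iff $\alpha A$ is $\alpha\rho$-monotone, so $M_{\alpha A}=\alpha M_A$. Similarly $\langle x-y,\alpha(u-v)\rangle\ge\sigma\alpha^2\|u-v\|^2$ holds iff $\langle x-y,u-v\rangle\ge\sigma\alpha\|u-v\|^2$. Thus $\alpha A$ is $\sigma$-comonotone iff $A$ is $\alpha\sigma$-comonotone, which gives $C_{\alpha A}=\alpha^{-1}C_A$.

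\emph{Item \ref{i:mono4}.} Suppose $A$ is $\rho_1$-monotone and $B$ is $\rho_2$-monotone. Take $(x,u_1+u_2),(y,v_1+v_2)\in\gra(A+B)$ with $u_1\in Ax$, $u_2\in Bx$, and so on. Adding the two inequalities shows that $A+B$ is $(\rho_1+\rho_2)$-monotone. Taking suprema over $\rho_1<m(A)$ and $\rho_2<m(B)$ gives $m(A+B)\ge m(A)+m(B)$; if a graph is empty the claim is trivial. For the second claim, duality gives
$c((A^{-1}+B^{-1})^{-1})=m(A^{-1}+B^{-1})\ge m(A^{-1})+m(B^{-1})=c(A)+c(B)$.

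\emph{Item \ref{i:mono5}.} Write $\gra(A+\mu\Id)=\{(x,u+\mu x)\}$. Then
$\langle x-y,u-v+\mu(x-y)\rangle=\langle x-y,u-v\rangle+\mu\|x-y\|^2$.
It follows that $A+\mu\Id$ is $(\rho+\mu)$-monotone iff $A$ is $\rho$-monotone, so $M_{A+\mu \Id}=M_A+\mu$. This gives equality rather than just an inequality, which matters because $\mu$ may be negative. For the Yosida part, $Y_\mu(A)^{-1}=\mu\Id+A^{-1}$. Duality applied twice then yields
$c(Y_\mu(A))=m(Y_\mu(A)^{-1})=m(A^{-1}+\mu\Id)=m(A^{-1})+\mu=c(A)+\mu$.

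The only delicate point is bookkeeping rather than any real obstacle. One must make sure the suprema behave correctly, using the standing finiteness assumptions $m,c>-\infty$ so that the sets are nonempty. It is also important to take inverses in the sense of graphs, with no single-valuedness assumed, so that $(\mu\Id+A^{-1})^{-1}$ inverts cleanly.
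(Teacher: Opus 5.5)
Your proposal is correct and follows essentially the same route as the paper. The paper declares items (i)--(iv) directly verifiable, which your set identities $C_A=M_{A^{-1}}$, $M_{\alpha A}=\alpha M_A$, $C_{\alpha A}=\alpha^{-1}C_A$ and $M_{A+\mu\Id}=M_A+\mu$ carry out, and it proves the Yosida part by exactly your chain $c(Y_\mu(A))=m(\mu\Id+A^{-1})=\mu+m(A^{-1})=\mu+c(A)$.
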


\begin{proof}
\ref{i:dualthing}, \ref{i:mono2}, \ref{i:mono3} and \ref{i:mono4} can be directly verified. \ref{i:mono5}:
Since $Y_\mu(A)=\left(\mu \mathrm{Id}+A^{-1}\right)^{-1}$, we have
$$
\begin{aligned}
c\left(Y_\mu(A)\right)
& =c\left((\mu \mathrm{Id}+A^{-1}\right)^{-1})
=m\left(\mu \mathrm{Id}+A^{-1}\right) \\
& =\mu+m\left(A^{-1}\right)
 =\mu+c(A).
\end{aligned}
$$
\end{proof}

\begin{example}
\begin{enumerate}
\item
Let $\alpha \in \mathbb{R}$. Then $m\left(\alpha \Id\right)=\alpha$ and$$
c(\alpha \Id)=\begin{cases}
\alpha^{-1} & \text{if $\alpha \neq 0$,} \\
+\infty & \text{if $\alpha=0$.}
\end{cases}
$$
\item
Let $f(x)=e^x$ on $\RR$. Then $m(f)=c(f)=0$.
\item
Let $A: X \rightrightarrows X$. Then $c(A)=+\infty$ if and only if there exists $v \in X$ such that $Ax=\{v\}$ for any $x \in \operatorname{dom}A$.
\end{enumerate}
\end{example}

\subsection{Connection with conical averagedness}

In this subsection, we provide useful formulae for computing the modulus via
the cocoercivity and comonotone values.
An earlier version of the following fact on averaged mappings goes back to \cite[Proposition 3.4]{XH}.

\begin{fact}\label{kcfact}\emph{\cite[Corollary 3.5]{BMW}}
Let $T: X \rightarrow X$, and let $\alpha \in(0,+\infty)$. Then
$$
T \text { is } \text {conically $\alpha$-averaged } \Leftrightarrow \mathrm{Id}-T \text { is } \frac{1}{2 \alpha} \text {-cocoercive. }
$$
\end{fact}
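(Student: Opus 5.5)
We will derive Fact~\ref{kcfact} directly from the characterization in Lemma~\ref{l:char:con}, and only rearrange terms. Set $A:=\Id-T$, and for $x,y\in X$ write $d:=x-y$ and $a:=Ax-Ay=d-(Tx-Ty)$. The two quantities in \eqref{e:average3} can then be rewritten in terms of $d$ and $a$. On the left, $\|Tx-Ty-(x-y)\|^2=\|a\|^2$. On the right, $\|x-y\|^2-\langle x-y,Tx-Ty\rangle=\langle d,d-(Tx-Ty)\rangle=\langle d,a\rangle$. Hence, for $\alpha>0$, Lemma~\ref{l:char:con} says that $T$ is conically $\alpha$-averaged if and only if
\[
(\forall x\in X)(\forall y\in X)\quad \|Ax-Ay\|^2\le 2\alpha\,\langle x-y,Ax-Ay\rangle .
\]

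Dividing this inequality by $2\alpha>0$ gives
\[
\langle x-y,Ax-Ay\rangle\ge \tfrac{1}{2\alpha}\|Ax-Ay\|^2 ,
\]
which is exactly $\tfrac{1}{2\alpha}$-cocoercivity of $A=\Id-T$. Every step is an equivalence, so both directions follow at once.

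The only point needing care is that Lemma~\ref{l:char:con} itself is correct; it comes from \eqref{e:average2}. For a self-contained check, write $T=(1-\alpha)\Id+\alpha N$. Then $N=\Id-\alpha^{-1}A$, and
\[
\|Nx-Ny\|^2=\|d\|^2-\tfrac{2}{\alpha}\langle d,a\rangle+\tfrac{1}{\alpha^2}\|a\|^2 .
\]
So $\|Nx-Ny\|^2\le\|d\|^2$ holds if and only if $\|a\|^2\le 2\alpha\langle d,a\rangle$. This gives the equivalence directly, since $N$ is determined by $T$ and $\alpha$. No step here is a real obstacle; the proof is a short algebraic verification.
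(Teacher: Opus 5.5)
Your proof is correct. Substituting $A=\Id-T$ turns \eqref{e:average3} into $\|Ax-Ay\|^2\le 2\alpha\langle x-y,Ax-Ay\rangle$, and your direct check via the unique candidate $N=\Id-\alpha^{-1}(\Id-T)$ makes the argument self-contained, independent of the quoted characterization \eqref{e:average2}. The paper gives no proof of its own (it imports the statement from the literature), but your derivation is exactly the immediate rewriting of Lemma~\ref{l:char:con} that the paper's setup anticipates.
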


\begin{proposition}[modulus of averagedness via comonotone value]\label{kcprop!}
Let $T: X \rightarrow X$ be conically averaged. Then
\begin{equation}
k(T)=\frac{1}{2} \frac{1}{c\left(\mathrm{Id}-T\right)}.
\end{equation}
\end{proposition}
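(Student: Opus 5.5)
The plan is to transfer the infimum defining $k(T)$ into a supremum defining $c(\Id-T)$ through Fact~\ref{kcfact}, treating the degenerate case $k(T)=0$ separately. We read the formula with the conventions $1/(+\infty)=0$ and $1/0=+\infty$.

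First I would handle the case $k(T)>0$. Set $A:=\Id-T$, which is single-valued with full domain. By Fact~\ref{kcfact}, for every $\alpha\in(0,+\infty)$, $T$ is conically $\alpha$-averaged if and only if $A$ is $\frac{1}{2\alpha}$-comonotone, since cocoercivity and comonotonicity coincide for positive constants. Hence the map $\alpha\mapsto \rho=\frac{1}{2\alpha}$ is a decreasing bijection from $(0,+\infty)$ onto itself. It sends the set $S:=\{\alpha>0 \mid T \text{ conically } \alpha\text{-averaged}\}$ onto $C_+:=\{\rho>0\mid A \text{ is } \rho\text{-comonotone}\}$.

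Next I would relate the infimum and supremum. Since $0$ is in $S$ only when $T=\Id$ (i.e.\ $k(T)=0$), in the present case $k(T)=\inf S$. A decreasing bijection turns infima into suprema, so $\sup C_+=\frac{1}{2\inf S}=\frac{1}{2k(T)}$. Because $S\neq\varnothing$, the set $C_+$ is nonempty. Moreover, comonotonicity is inherited downward in $\rho$, so the full set of $\rho\in\RR$ for which $A$ is $\rho$-comonotone has the same supremum as $C_+$. This gives $c(\Id-T)=\frac{1}{2k(T)}\in(0,+\infty)$, which rearranges to the claim.

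Then I would treat the case $k(T)=0$. By Lemma~\ref{zero case}, $T=\Id+v$, so $\Id-T\equiv -v$ is constant. Then $\langle x-y,u-v'\rangle=0=\rho\|u-v'\|^2$ holds for every $\rho$, so $c(\Id-T)=+\infty$ and the formula reads $0=\frac{1}{2}\cdot 0$. Conversely, if $c(\Id-T)=+\infty$, then $\Id-T$ is $\rho$-cocoercive for arbitrarily large $\rho$. This forces $k(T)\le \frac{1}{2\rho}\to 0$, which is consistent with the first case giving a finite positive value whenever $k(T)>0$.

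The only delicate point is the bookkeeping of the infimum and supremum: the infimum over $S$ need not be attained a priori, and nonpositive $\rho$ enter the definition of $c$. Lemma~\ref{zero case}(ii) and the monotonicity in $\rho$ dispose of both issues.
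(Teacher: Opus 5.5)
Your proposal is correct and follows the paper's proof: you split on whether $k(T)=0$ (using Lemma~\ref{zero case} to get $\Id-T$ constant, hence $c(\Id-T)=+\infty$), and otherwise convert the infimum over averagedness constants into the supremum over comonotonicity constants via Fact~\ref{kcfact}. Your handling of the inf/sup correspondence and of the nonpositive $\rho$ in the definition of $c$ is more explicit than the paper's one-line argument.
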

\begin{proof}
Combine Fact \ref{kcfact} and Lemma~\ref{zero case}. Indeed,
if $k(T)=0$, Lemma~\ref{zero case} gives $T=\Id+v$ for some $v\in X$,
implying $\Id-T\equiv v$ so that $c(\Id-T)=+\infty$. Then $k(T)=0=1/(2\infty)=1/(2c(\Id-T))$.
If $k(T)>0$, Fact~\ref{kcfact} gives that $T$ is $\alpha$-averaged if and only if
$\Id-T$ is $1/(2\alpha)$ cocoercive. Taking infimum all $\alpha\geq k(T)$ yields
$1/(2k(T))=c(\Id-T)$, as required.
\end{proof}

To study the modulus of resolvents of comonotone mappings, the following facts help.

\begin{fact}\emph{(\textbf{generalized Minty's Theorem})}\emph{\cite[Theorem 2.17]{BMW}}
Let $A: X \rightrightarrows X$ be $\rho$-comonotone for some $\rho>-1$. Then $A$ is maximally $\rho$-comonotone $\Leftrightarrow \operatorname{ran}(\operatorname{Id}+A)=X$.
\end{fact}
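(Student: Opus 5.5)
We must prove the generalized Minty theorem: for $A$ $\rho$-comonotone with $\rho>-1$, $A$ is maximally $\rho$-comonotone if and only if $\operatorname{ran}(\Id+A)=X$. The plan is to reduce it to the classical Minty theorem \cite[Theorem 21.1]{BC} for a suitably transformed monotone operator, namely a Yosida-type shift of $A$. Set $B:=\big(A^{-1}-\rho\Id\big)^{-1}$, so that $A=(B^{-1}+\rho\Id)^{-1}=Y_{\rho}(B)$. By the duality in Proposition~\ref{bigpropforcm}\ref{i:dualthing}, $A$ is $\rho$-comonotone exactly when $A^{-1}$ is $\rho$-monotone, i.e.\ $A^{-1}-\rho\Id$ is monotone, i.e.\ $B$ is monotone. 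The map $\gra A\to\gra B$, $(x,u)\mapsto(x-\rho u,u)$, is a linear bijection of $X\times X$; it preserves inclusion of graphs and sends $\rho$-comonotone operators to monotone ones and back. Hence $A$ is maximally $\rho$-comonotone if and only if $B$ is maximally monotone.

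Next apply Minty's theorem to $B$ in its scaled form: for a monotone $B$ and any $\lambda>0$, $B$ is maximally monotone if and only if $\operatorname{ran}(\Id+\lambda B)=X$. This holds because $\lambda B$ is monotone and maximality is invariant under positive scaling. The key computation relates $\operatorname{ran}(\Id+A)$ to $\operatorname{ran}(\Id+\lambda B)$. A point $z\in\operatorname{ran}(\Id+A)$ means $z=x+u$ with $(x,u)\in\gra A$. Writing $y:=x-\rho u$, so that $(y,u)\in\gra B$, we get $x=y+\rho u$ and therefore $z=y+(1+\rho)u$. Thus $\operatorname{ran}(\Id+A)=\operatorname{ran}(\Id+(1+\rho)B)$. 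Since $\rho>-1$, the factor $\lambda:=1+\rho$ is positive, and the scaled Minty theorem gives the equivalence.

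The only delicate point is verifying carefully that the graph transformation preserves maximality in both directions. Concretely, an extension $\tilde A\supsetneq A$ that is $\rho$-comonotone yields $\tilde B\supsetneq B$ that is monotone, and conversely. This follows because the transformation is invertible and acts on graphs elementwise. The hypothesis $\rho>-1$ enters exactly once, to guarantee $1+\rho>0$; for $\rho=-1$ the range would collapse to $\operatorname{dom}B$ and the equivalence would fail.
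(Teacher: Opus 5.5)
The paper gives no proof of this statement; it imports it as a Fact from \cite[Theorem 2.17]{BMW}, so there is no in-paper argument to compare against. Judged on its own, your proof is correct and complete.

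The shear $(x,u)\mapsto(x-\rho u,u)$ is a linear bijection of $X\times X$. The identity
\[
\langle x-y,u-v\rangle-\rho\|u-v\|^2=\langle (x-\rho u)-(y-\rho v),u-v\rangle
\]
shows that it carries $\rho$-comonotone graphs exactly onto monotone graphs. Since it also preserves strict inclusion of graphs, $A$ is maximally $\rho$-comonotone iff $B$ is maximally monotone. Your computation $\operatorname{ran}(\Id+A)=\operatorname{ran}(\Id+(1+\rho)B)$ is right. Minty's theorem applied to the monotone operator $(1+\rho)B$ then closes the argument; its maximality is equivalent to that of $B$ because $1+\rho>0$.

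Two small remarks. First, Proposition~\ref{bigpropforcm}\ref{i:dualthing} is formulated for the values, $c(A)=m(A^{-1})$. The level-$\rho$ equivalence you actually use (``$A$ is $\rho$-comonotone iff $A^{-1}$ is $\rho$-monotone'') is better justified directly from the definitions, e.g.\ by the displayed identity above.

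Second, the same reduction can equally be run on the inverse side. $A$ is maximally $\rho$-comonotone iff $D:=A^{-1}-\rho\Id$ ($=B^{-1}$) is maximally monotone, and
\[
\operatorname{ran}(\Id+A)=\operatorname{ran}(\Id+A^{-1})=\operatorname{ran}((1+\rho)\Id+D)=(1+\rho)\operatorname{ran}(\Id+(1+\rho)^{-1}D).
\]
This is your argument after inverting $B$, and neither version gains anything over the other. Your closing observation about $\rho=-1$ is also accurate: the range collapses to $\dom B$, which need not be $X$ for a maximally monotone $B$ (e.g.\ a normal cone operator).
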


\begin{fact}\emph{\cite[Corollary 2.14]{BMW}(see also \cite[Proposition 3.4]{DP})}
Let $A: X \rightrightarrows X$ be maximally $\rho$-comonotone for some $\rho>-1$. Then $J_A$ and $J_{A^{-1}}$ are single-valued and with full domain.
\end{fact}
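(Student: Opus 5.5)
The plan is to derive both conclusions directly from the generalized Minty Theorem stated just above, together with the comonotonicity inequality and the standard inverse-resolvent identity.

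\textbf{Full domain of $J_A$.} Since $A$ is maximally $\rho$-comonotone with $\rho>-1$, the generalized Minty Theorem gives $\operatorname{ran}(\Id+A)=X$. By definition of the inverse, $\dom J_A=\dom(\Id+A)^{-1}=\operatorname{ran}(\Id+A)=X$.

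\textbf{Single-valuedness of $J_A$.} Fix $z\in X$ and take $x,y\in J_Az$. Then $z-x\in Ax$ and $z-y\in Ay$. Applying $\rho$-comonotonicity to the pairs $(x,z-x)$ and $(y,z-y)$ gives
$$
\langle x-y,(z-x)-(z-y)\rangle\ \ge\ \rho\|(z-x)-(z-y)\|^2 .
$$
The left side equals $-\|x-y\|^2$ and the right side equals $\rho\|x-y\|^2$. Hence $(1+\rho)\|x-y\|^2\le 0$. Because $\rho>-1$, this forces $x=y$, so $J_A$ is single-valued.

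\textbf{The resolvent of $A^{-1}$.} Here the key step is the identity $J_{A^{-1}}=\Id-J_A$, which I will verify as an equality of set-valued maps. For $z,u\in X$ we have the chain of equivalences
$$
u\in J_{A^{-1}}z \iff z-u\in A^{-1}u \iff u\in A(z-u) \iff z-(z-u)\in A(z-u) \iff z-u\in J_Az .
$$
Thus $J_{A^{-1}}z=z-J_Az$ for every $z\in X$. By the first two steps, $J_Az$ is a singleton for every $z$, so $J_{A^{-1}}z$ is also a singleton for every $z$. Hence $J_{A^{-1}}$ is single-valued with full domain.

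I do not expect a genuine obstacle here. The only nontrivial ingredient is surjectivity of $\Id+A$, which is supplied by the generalized Minty Theorem. The remaining care is to do the inverse-resolvent identity at the level of sets, so that no single-valuedness is assumed before it has been proved.
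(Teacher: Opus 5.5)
Your argument is correct as a derivation from the Facts the paper has already stated. The computation $(1+\rho)\|x-y\|^2\le 0$ is right, and it shows that single-valuedness needs only $\rho$-comonotonicity, not maximality. You also verify $J_{A^{-1}}z=z-J_Az$ properly at the level of sets.

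The paper gives no proof of its own here; it imports the statement from BMW (Corollary 2.14) and Dao--Phan. What you should notice is that your full-domain step restates the claim rather than proves it. Since $\dom J_A=\operatorname{ran}(\Id+A)$, the forward implication of the generalized Minty theorem \emph{is} the assertion $\dom J_A=X$. Moreover, in BMW that theorem (Theorem 2.17) comes after Corollary 2.14. So the only nontrivial half of the claim is assumed. That is legitimate inside this paper, since both results are listed as imported Facts, but it is not an independent argument.

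The self-contained route goes through classical Minty. Since $A^{-1}$ is maximally $\rho$-monotone, $M:=A^{-1}-\rho\Id$ is maximally monotone, and because $1+\rho>0$,
\[
J_{A^{-1}}=\bigl((1+\rho)\Id+M\bigr)^{-1}=J_{M/(1+\rho)}\circ\tfrac{1}{1+\rho}\Id ,
\]
which is single-valued with full domain. Then $J_A=\Id-J_{A^{-1}}$ follows from your identity.

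This costs one reduction but buys independence from the generalized Minty theorem; in fact it proves that theorem's forward direction. It also gives more: firm nonexpansiveness of $J_{M/(1+\rho)}$ shows at once that $\Id-J_A=J_{A^{-1}}$ is $(1+\rho)$-cocoercive. Equivalently, $J_A$ is conically $\frac{1}{2(1+\rho)}$-averaged, which is the content of the later Fact linking comonotone operators and conically averaged resolvents.
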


Therefore, for a maximally $\rho$-comonotone operator $A$ with $\rho>-1$, we may write $J_A: X \rightarrow X$. In what follows, we will typically assume $A$ to be a maximally $\rho$-comonotone operator with $\rho>-1$ instead of a nonmaximal one, as a maximal extension always exists by Zorn’s lemma.

The intimate relationship between comonotone mappings and conically averaged mappings is given in the following fundamental fact.

\begin{fact}\label{JAcAcomonofact}\emph{\cite[Corollary 3.8]{BMW}}
Let $T: X \rightarrow X$ and $\alpha \in ( 0,+\infty)$. Then $T$ is conically $\alpha$-averaged if and only if it is the resolvent of a maximally $\rho$-comonotone operator $A: X \rightrightarrows X$, where $\rho=\frac{1}{2 \alpha}-1>-1$, i.e., $\alpha=\frac{1}{2(\rho+1)}$.
\end{fact}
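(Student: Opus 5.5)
The plan is to obtain both implications from Fact~\ref{kcfact}, which says that $T$ is conically $\alpha$-averaged exactly when $\Id-T$ is $\frac{1}{2\alpha}$-cocoercive. The remaining work is to translate cocoercivity of $\Id-T$ into $\rho$-comonotonicity of the operator $A:=T^{-1}-\Id$, where $\rho=\frac{1}{2\alpha}-1$; note that $J_A=(\Id+A)^{-1}=T$.

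For the forward direction, suppose $T$ is conically $\alpha$-averaged and define $A:=T^{-1}-\Id$. Then $\gra A=\menge{(Tx,\,x-Tx)}{x\in X}$. Take $(Tx,x-Tx)$ and $(Ty,y-Ty)$ in $\gra A$ and set $u:=x-Tx-(y-Ty)$, so that $Tx-Ty=(x-y)-u$. A direct computation gives
$$\scal{Tx-Ty}{u}=\scal{x-y}{u}-\|u\|^2 \geq \Big(\tfrac{1}{2\alpha}-1\Big)\|u\|^2=\rho\|u\|^2,$$
where the inequality is exactly the $\frac{1}{2\alpha}$-cocoercivity of $\Id-T$ from Fact~\ref{kcfact}. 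Hence $A$ is $\rho$-comonotone. Moreover $\rho>-1$, and $\operatorname{ran}(\Id+A)=\dom T=X$. The generalized Minty theorem therefore shows that $A$ is maximally $\rho$-comonotone, and $J_A=T$.

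For the converse, let $A$ be maximally $\rho$-comonotone with $\rho=\frac{1}{2\alpha}-1>-1$, and set $T:=J_A$. This $T$ is single-valued with full domain by the cited corollary. For $x\in X$ we have $(Tx,\,x-Tx)\in\gra A$. Running the same computation in reverse, $\rho$-comonotonicity gives
$$\scal{x-y}{u}\geq(\rho+1)\|u\|^2=\tfrac{1}{2\alpha}\|u\|^2,$$
so $\Id-T$ is $\frac{1}{2\alpha}$-cocoercive. Fact~\ref{kcfact} then shows that $T$ is conically $\alpha$-averaged.

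The only delicate point is maximality together with single-valuedness of $J_A$ in the forward direction. Both are handled by the Minty-type result (full range of $\Id+A$) and the resolvent single-valuedness fact. The rest is the algebraic identity above, which is routine.
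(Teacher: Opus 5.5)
Your proof is correct. The paper gives no argument of its own for this statement; it is imported from \cite[Corollary 3.8]{BMW}. Your reduction to Fact~\ref{kcfact} is a sound self-contained derivation using only facts the paper already states.

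With $A:=T^{-1}-\Id$ one has $\gra A=\menge{(Tx,x-Tx)}{x\in X}$ and $J_A=T$. The identity $\langle Tx-Ty,u\rangle=\langle x-y,u\rangle-\|u\|^2$ turns $\frac{1}{2\alpha}$-cocoercivity of $\Id-T$ into $\rho$-comonotonicity of $A$, and conversely. Maximality then follows from the generalized Minty theorem, because $\operatorname{ran}(\Id+A)=\dom T=X$.

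Your graph computation is the standard mechanism written out by hand. It combines the resolvent identity $J_{A^{-1}}=\Id-J_A$ with the duality $c(A)=m(A^{-1})$: $A$ is $\rho$-comonotone if and only if $\Id+A^{-1}$ is $(1+\rho)$-monotone, if and only if $\Id-J_A=(\Id+A^{-1})^{-1}$ is $(1+\rho)$-cocoercive.

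One small simplification: you never need the single-valuedness fact for $J_A$. In the forward direction $J_A=(T^{-1})^{-1}=T$ directly. In the converse, $T=J_A$ is already a given single-valued map with full domain.
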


\begin{proposition}[modulus of averagedness of resolvent via comonotone value]\label{JACACOMONOPROP}
Let $A: X \rightrightarrows X$ be maximally $\rho$-comonotone with $\rho >-1$, i.e., $c(A)>-1$. Then $$
k\left(J_A\right)=\frac{1}{2} \frac{1}{1+c(A)}.
$$ Consequently, if $-1<c(A)<-1/2$, then $J_{A}$ is not nonexpansive.
\end{proposition}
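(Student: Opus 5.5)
The plan is to combine Fact~\ref{JAcAcomonofact} with the definition of $c(A)$ as a supremum, handling the case $c(A)=+\infty$ separately. Throughout, set $T:=J_A$. This map is single-valued with full domain because $A$ is maximally $\rho$-comonotone with $\rho>-1$.

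First we show $k(J_A)\le \frac{1}{2(1+c(A))}$. Take any $\rho'\in(-1,c(A))$. Then $A$ is $\rho'$-comonotone. It is also \emph{maximally} $\rho'$-comonotone, by the generalized Minty theorem: $\operatorname{ran}(\Id+A)=X$ holds because $A$ is maximally $\rho$-comonotone, and this range condition does not depend on the parameter. Fact~\ref{JAcAcomonofact} then says $J_A$ is conically $\frac{1}{2(\rho'+1)}$-averaged, so $k(J_A)\le \frac{1}{2(\rho'+1)}$. Letting $\rho'\uparrow c(A)$ gives the bound. If $c(A)=+\infty$, the same argument gives $k(J_A)=0$, which agrees with the formula under the convention $1/\infty=0$.

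Next we prove the reverse inequality. Suppose $k(J_A)<\frac{1}{2(1+c(A))}$.
\begin{itemize}
\item If $k(J_A)=0$, Lemma~\ref{zero case} gives $J_A=\Id+v$. Hence $\gra A=\{(x-v,v)\mid x\in X\}$, so $A\equiv v$ is constant and $c(A)=+\infty$. The formula then holds.
\item If $k(J_A)>0$, pick $\alpha$ with $k(J_A)<\alpha<\frac{1}{2(1+c(A))}$. By Lemma~\ref{zero case} (every $\alpha\ge k(T)$ works), $J_A$ is conically $\alpha$-averaged. Fact~\ref{JAcAcomonofact} then makes $J_A$ the resolvent of some maximally $\rho''$-comonotone $B$ with $\rho''=\frac1{2\alpha}-1>c(A)$. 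Resolvents determine the operator, since $\gra A=\{(Jx,x-Jx)\}$, so $B=A$. Thus $A$ is $\rho''$-comonotone with $\rho''>c(A)$, which contradicts the definition of $c(A)$.
\end{itemize}

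An alternative route to the same identity is algebraic. By Proposition~\ref{kcprop!}, $k(J_A)=1/(2c(\Id-J_A))$. Note $\Id-J_A=J_{A^{-1}}=(\Id+A^{-1})^{-1}=Y_1(A)$. Proposition~\ref{bigpropforcm}\ref{i:mono5} gives $c(Y_1(A))=1+c(A)$, and the formula follows at once. I would likely present this shorter version and keep the first argument only as a check.

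The consequence is then immediate. If $-1<c(A)<-1/2$, then $1+c(A)<1/2$, so $k(J_A)>1$, and by Fact~\ref{f:one:lip} $J_A$ is not nonexpansive.

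The main obstacle is the bookkeeping of maximality and of the infinite or zero cases. In the second route it is the justification that $\Id-J_A=Y_1(A)$ as operators, which is a routine graph computation.
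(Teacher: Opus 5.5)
Your first argument is the paper's proof made explicit. The paper only writes ``apply Fact~\ref{JAcAcomonofact}'', and your two inequalities supply the sup/inf bookkeeping that this one-liner leaves to the reader: maximal $\rho'$-comonotonicity for every $\rho'\in(-1,c(A))$ via the generalized Minty theorem, and $B=A$ because an operator is determined by its resolvent.

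The route you say you would present is genuinely different, and it is correct. It bypasses the resolvent characterization of Fact~\ref{JAcAcomonofact}. Instead it combines three ingredients:
\begin{itemize}
\item the cocoercivity formula $k(T)=1/(2c(\Id-T))$ of Proposition~\ref{kcprop!}, i.e.\ Fact~\ref{kcfact};
\item the resolvent identity $\Id-J_A=J_{A^{-1}}=(\Id+A^{-1})^{-1}=Y_1(A)$;
\item the Yosida shift $c(Y_1(A))=1+c(A)$ from Proposition~\ref{bigpropforcm}\ref{i:mono5}.
\end{itemize}
Your route gives a purely algebraic derivation from results already established in Section~\ref{s:monotonev}. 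The case $c(A)=+\infty$ is absorbed by the convention $1/\infty=0$, and you need neither maximality at intermediate parameters nor the identification of $B$ with $A$. The paper's route, in exchange, makes the statement a mere reparametrization $\alpha=1/(2(\rho+1))$ of a known characterization.

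Two small points if you write up the second route. First, Proposition~\ref{kcprop!} presupposes that $J_A$ is conically averaged, so say why. For instance, $c(\Id-J_A)=1+c(A)>0$ makes $\Id-J_A$ cocoercive, and then Fact~\ref{kcfact} applies. Second, Proposition~\ref{bigpropforcm} is stated under the blanket hypothesis $m(A)>-\infty$. A maximally comonotone $A$ need not satisfy this; for example, $A=-\tan(2\,\cdot)$ on $(-\pi/4,\pi/4)\subset\RR$ is maximally $(-1/2)$-comonotone with $m(A)=-\infty$. This is harmless, because item~\ref{i:mono5} only uses $c(A)=m(A^{-1})$ and $m(B+\mu\Id)=m(B)+\mu$, which hold regardless.

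There is also a harmless sign slip in your first argument. $J_A=\Id+v$ gives $\gra A=\{(x+v,-v)\mid x\in X\}$, i.e.\ $A\equiv -v$ rather than $v$; the conclusion $c(A)=+\infty$ is unaffected.
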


\begin{proof}
Apply Fact~\ref{JAcAcomonofact}.
If $-1<c(A)<-1/2$, then $2(1+c(A))<1$ so that $k(J_{A})>1$, thus
$J_{A}$ is not nonexpansive.
\end{proof}

We illustrate Proposition~\ref{JACACOMONOPROP} with an example.
\begin{example} Let $\alpha<-1$ and $A:=\alpha\Id$. Then $A^{-1}=\alpha^{-1}\Id$ is $(\alpha^{-1})$-monotone
with $\alpha^{-1}>-1$,
i.e., $A$ is comonotone with $c(A)=\alpha^{-1}>-1$. We have
\begin{equation}
J_{A}=\frac{1}{1+\alpha}\Id, \quad J_{A^{-1}}=\frac{\alpha}{1+\alpha}\Id.
\end{equation}
While
\begin{equation} k(J_{A^{-1}})=+\infty
\end{equation}
because of $\alpha/(1+\alpha)>1$,
\begin{equation}
k(J_{A})=\frac{1}{2}\frac{1}{1+\alpha^{-1}}=\frac{1}{2}\frac{\alpha}{1+\alpha}.
\end{equation}
Thus $J_{A}$ is conically averaged, but $J_{A^{-1}}$ is not.
Moreover, $J_{A}$ is nonexpansive when $\alpha\leq -2$, and expansive when $-2<\alpha<-1$. However,
$J_{A^{-1}}$ is always expansive when $\alpha<-1$.
\end{example}

\begin{corollary}
Let $A, A^{-1}: X \rightrightarrows X$ be maximally $\rho$-comonotone with $\rho>-1$. Then
$k\left(J_A\right)=k\left(J_{A^{-1}}\right)$ if and only if $c(A)=m(A)$.
\end{corollary}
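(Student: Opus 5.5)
The plan is to apply Proposition~\ref{JACACOMONOPROP} twice, once to $A$ and once to $A^{-1}$, and then convert the resulting equality of moduli into an equality of values using the duality in Proposition~\ref{bigpropforcm}\ref{i:dualthing}. By hypothesis both $A$ and $A^{-1}$ are maximally $\rho$-comonotone with $\rho>-1$. Since $c(A)\ge\rho$ and $c(A^{-1})\ge\rho$ (the comonotone value is a supremum over admissible constants), we have $c(A)>-1$ and $c(A^{-1})>-1$. Proposition~\ref{JACACOMONOPROP} therefore applies to both operators and gives
\begin{equation*}
k(J_A)=\frac{1}{2}\frac{1}{1+c(A)},\qquad k(J_{A^{-1}})=\frac{1}{2}\frac{1}{1+c(A^{-1})}.
\end{equation*}

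Next, duality gives $c(A^{-1})=m(A)$, so $k(J_{A^{-1}})=\frac{1}{2}\frac{1}{1+m(A)}$, with $m(A)>-1$. The map $t\mapsto \frac{1}{2(1+t)}$ is strictly decreasing and injective on $(-1,+\infty]$, provided we use the convention $1/(+\infty)=0$, as the paper does in the proof of Proposition~\ref{kcprop!}. Hence $k(J_A)=k(J_{A^{-1}})$ holds if and only if $c(A)=m(A)$.

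The only delicate point is the extended-value case, where $c(A)=+\infty$ or $m(A)=+\infty$. With the convention above, $k=0$ corresponds exactly to the value $+\infty$, so injectivity still holds on the extended interval and the equivalence is unaffected. Apart from this bookkeeping, I expect no real obstacle.
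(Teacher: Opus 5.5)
Your proposal is correct and matches the paper's proof: apply Proposition~\ref{JACACOMONOPROP} to both $A$ and $A^{-1}$, rewrite $c(A^{-1})=m(A)$ by duality, and use injectivity of $t\mapsto 1/(2(1+t))$ on $(-1,+\infty]$. Your explicit check that $c(A),m(A)>-1$ and your handling of the $+\infty$ case are details the paper leaves implicit.
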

\begin{proof}
Apply Proposition~\ref{JACACOMONOPROP} to both $A$ and $A^{-1}$, and use $c\left(A^{-1}\right)=m(A)$ in
Proposition~\ref{bigpropforcm}\ref{i:dualthing}.
\end{proof}

\begin{corollary} Let $A: X \rightrightarrows X$ be $\rho$-monotone with $\rho>-2$.
Then $k\left(J_{J_A}\right)=\frac{1}{2} \frac{1}{2+m(A)}$.
\end{corollary}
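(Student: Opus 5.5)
The plan is to apply Proposition~\ref{JACACOMONOPROP} to the operator $J_A$, regarded as a set-valued mapping. This requires two things: the comonotone value $c(J_A)$, and confirmation that $J_A$ is maximally $\rho'$-comonotone for some $\rho'>-1$. Once both are in hand, Proposition~\ref{JACACOMONOPROP} gives
\[
k\left(J_{J_A}\right)=\frac{1}{2}\,\frac{1}{1+c(J_A)}.
\]
It therefore suffices to show that $c(J_A)=1+m(A)$, because then $1+c(J_A)=2+m(A)$.

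To compute $c(J_A)$, use the duality in Proposition~\ref{bigpropforcm}\ref{i:dualthing}:
\[
c(J_A)=m\left(J_A^{-1}\right)=m(\Id+A).
\]
Proposition~\ref{bigpropforcm}\ref{i:mono5} then gives $m(\Id+A)=1+m(A)$. Since $A$ is $\rho$-monotone with $\rho>-2$, we have $m(A)\ge\rho>-2$, so $c(J_A)=1+m(A)>-1$. This is exactly the condition on the comonotone value that Proposition~\ref{JACACOMONOPROP} needs. Choosing $\rho'\in(-1,1+m(A))$, or $\rho'=1+\rho$, shows that $J_A$ is $\rho'$-comonotone with $\rho'>-1$.

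The main obstacle is maximality, together with well-definedness. Proposition~\ref{JACACOMONOPROP} is stated for maximally comonotone operators. $J_A$ may be set-valued when $\rho\le -1$, and its graph need not be maximal unless $A$ is maximal. I would handle this as the paper suggests after the generalized Minty theorem. First replace $A$ by a maximal $\rho$-monotone extension, using Zorn's lemma. Then $\Id+A$ is maximally $(1+\rho)$-monotone, so $J_A=(\Id+A)^{-1}$ is maximally $(1+\rho)$-comonotone, since inversion swaps the two notions at the level of graphs. The generalized Minty theorem then gives $\operatorname{ran}(\Id+J_A)=X$, and the fact following it shows $J_{J_A}$ is single-valued with full domain.

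If the statement is read without maximality, one can instead argue directly. The identity $J_{J_A}$ corresponds, through Fact~\ref{JAcAcomonofact} and Proposition~\ref{kcprop!}, to $k(J_{J_A})=\tfrac12\,\tfrac{1}{c(\Id-J_{J_A})}$. The same duality computation of $c(J_A)$ carries over unchanged, since all the identities above are graph-level identities.
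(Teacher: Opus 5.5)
Your proof is correct and follows the paper's argument: Proposition~\ref{bigpropforcm} gives $c(J_A)=m(J_A^{-1})=m(\Id+A)=1+m(A)>-1$, and Proposition~\ref{JACACOMONOPROP} applied to $J_A$ then yields the formula. Your maximality discussion makes explicit what the paper leaves to its standing convention; in fact $k(J_{J_A})$ is only meaningful when $\operatorname{ran}(\Id+J_A)=X$, which by the generalized Minty theorem is equivalent to $J_A$ being maximally $(1+\rho)$-comonotone, so your final ``without maximality'' paragraph is unnecessary.
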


\begin{proof}
By Proposition \ref{bigpropforcm}, we have $c\left(J_A\right)=m(\Id+A)=1+m(A)$.
This implies that $J_{A}$ is $\rho$-comonotone with $\rho>-1$ by our assumption.
Then apply Proposition~\ref{JACACOMONOPROP}
to $J_{A}$.
\end{proof}

\section{Linear world}\label{s:linearw}

In this section, we discuss conically averaged mappings in the linear setting.
For simplicity, we will now work in Euclidean space $\mathbb{R}^n$ for some $n \in \NN$, although most results remain valid in a general Hilbert space setting.
We give conditions under which
a matrix is a conically averaged mapping, and provide surprising formulae for the modulus of averagedness of
a conically averaged matrix
in terms of its eigenvalues. Moreover, we show that the function of modulus of
conical averagedness is proper, lower
semicontinuous, and convex; and that the interior of the set of conically averaged matrices consists of
the matrices whose symmetric parts have their largest eigenvalues less than one.

Let $M_n(\mathbb{R})$ denote the set of all $n\times n$ square matrices, $\mathbb{S}^n:=\left\{A \in M_n(\mathbb{R}) \mid A^{\top}=A\right\}$ the set of all symmetric matrices, and $\sigma(A)$ the set of all eigenvalues of $A$ (the spectrum of $A$).
We will use $A_S:=(A+A^{\top})/2$ for the symmetric part of $A$, and $\lambda_{\min}(A)$ (resp., $\lambda_{\max}(A)$) for the smallest (resp., largest) eigenvalue of $A$ provided that all eigenvalues of $A$ are real. The matrix $2$-norm of $A\in M_{n}(\RR)$ is
$\|A\|_2:=\sqrt{\lambda_{\max}(A^{\top}A)}$. See \cite[section 5.2]{meyer} for more details.

\subsection{Symmetric or skew matrix}

Symmetric matrices form an important subclass of square matrices and their modulus is of independent interest.

\begin{proposition}\label{kSn}
Let $A \in \mathbb{S}^n$. Then $A$ is conically averaged if and only if $\sigma(A) \subset (-\infty,1]$, in which case the formula holds:$$
k(A)=\frac{1-\lambda_{\min }(A)}{2}.
$$
Consequently, if $\lambda_{\max}(A)>1$ we have $k(A)=+\infty$.
\end{proposition}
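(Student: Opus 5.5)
The plan is to use Proposition~\ref{kcprop!}: for conically averaged $T$, $k(T)=\frac{1}{2c(\Id-T)}$. I will therefore compute the comonotone value of the symmetric matrix $B:=\Id-A$, whose eigenvalues are $1-\lambda$ for $\lambda\in\sigma(A)$. Diagonalize $A=Q\Lambda Q^{\top}$ with $Q$ orthogonal. The maps $x\mapsto Q^\top x$ preserve inner products and norms, so every inequality in Lemma~\ref{l:char:con} and in the definition of comonotonicity may be checked in the eigenbasis. Writing $z=x-y$ in coordinates $z_i$, with $\beta_i:=1-\lambda_i$, linearity reduces everything to the single-vector inequality
\begin{equation*}
\sum_i \beta_i^2 z_i^2 \le 2k\sum_i \beta_i z_i^2 \quad (\forall z\in\RR^n),
\end{equation*}
which is \eqref{e:average3} for $T=A$.

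\emph{Necessity.} Suppose $A$ is conically $k$-averaged for some $k$. If $k=0$, Lemma~\ref{zero case} forces $A=\Id$, and $\sigma(A)=\{1\}$. If $k>0$, take $z=e_i$ in the display to get $\beta_i^2\le 2k\beta_i$. When $\beta_i<0$ this reads $\beta_i^2\le 2k\beta_i<0$, which is impossible. Hence every $\beta_i\ge 0$, that is, $\sigma(A)\subset(-\infty,1]$. This also yields the final claim: $\lambda_{\max}(A)>1$ gives $k(A)=+\infty$.

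\emph{Sufficiency and formula.} Assume all $\beta_i\ge0$ and set $K:=\frac{1-\lambda_{\min}(A)}{2}=\frac{\max_i\beta_i}{2}$. Then $\beta_i^2\le 2K\beta_i$ holds termwise, so the display holds with $k=K$. Corollary~\ref{c:char:con} then gives $k(A)\le K$; this also covers the case $K=0$, i.e.\ $A=\Id$. For the reverse inequality, suppose $K>0$ and $A$ is conically $k$-averaged with $k>0$. Testing with $z=e_{i^*}$, where $\beta_{i^*}=\max_i\beta_i>0$, gives $\beta_{i^*}^2\le 2k\beta_{i^*}$, so $k\ge K$. Taking the infimum gives $k(A)\ge K$. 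This uses Lemma~\ref{zero case}(ii) implicitly, or directly the definition of the infimum: if some $k$ below $K$ were admissible, it would violate the bound.

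\emph{Alternative route.} One could also compute $c(\Id-A)$ directly: for symmetric positive semidefinite $B$, $c(B)=1/\lambda_{\max}(B)$, and then apply Proposition~\ref{kcprop!}. I prefer the direct coordinate argument above because it avoids edge cases.

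The only real obstacle is bookkeeping at the boundary cases, namely $\lambda_{\min}=1$ and $k=0$ versus $k>0$ in Lemma~\ref{l:char:con}. These are handled by Lemma~\ref{zero case} and Corollary~\ref{c:char:con}. Everything else is a termwise eigenvalue inequality.
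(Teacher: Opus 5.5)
Your proof is correct and is essentially the paper's argument: both diagonalize $A$ orthogonally and reduce conical $k$-averagedness to the per-eigenvalue condition $(\lambda-1)(\lambda-(1-2k))\le 0$ for every $\lambda\in\sigma(A)$. The paper gets there from $\|A-(1-k)\Id\|_2\le k$, while you get there from the inequality of Lemma~\ref{l:char:con}; your termwise condition $\beta_i^2\le 2k\beta_i$ is this same inequality. Your opening sentence says you will use Proposition~\ref{kcprop!}, but the argument you actually carry out does not use it, so that sentence can be dropped.
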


\begin{proof}
By definition $A$ is conically averaged if and only if $A=(1-\alpha) \Id +\alpha N$ for some $\alpha \geq 0$ and nonexpansive operator $N$. Since $A \in \mathbb{S}^n$, this is equivalent to
$$
\begin{aligned}
& \|A-(1-\alpha) \Id\|_{2} \leq \alpha \\
\Leftrightarrow &\ \sqrt{\lambda_{\max }\left((A-(1-\alpha) \Id)^2\right)} \leq \alpha \\
\Leftrightarrow &\ \lambda_{\max }\left(A^2-2(1-\alpha) A+(1-\alpha)^2 \Id\right) \leq \alpha^2 \\
\Leftrightarrow &\ \lambda_{\max }\left(A^2-2(1-\alpha) A+(1-2 \alpha) \Id\right) \leq 0 \\
\Leftrightarrow &\ \max _{\lambda \in \sigma(A)}\left(\lambda^2-2(1-\alpha) \lambda+1-2 \alpha\right) \leq 0.
\end{aligned}
$$
Note that in the last equivalence we use the fact that the matrix
$A^2-2(1-\alpha)A+(1-2\alpha)\Id$ has eigenvalues of form
$\lambda^2-2(1-\alpha)\lambda+(1-2\alpha)$ with $\lambda\in \sigma(A)$ because of $A\in \mathbb{S}^n$.
Now
$$
\lambda^2-2(1-\alpha) \lambda+1-2 \alpha=(\lambda-1)(\lambda-(1-2 \alpha)) .
$$
Since $\alpha \geq 0$, one has $1-2 \alpha \leq 1$, and the above quadratic is nonpositive exactly on the interval $[1-2 \alpha, 1]$. Therefore,
$$
\max _{\lambda \in \sigma(A)}\left(\lambda^2-2(1-\alpha) \lambda+1-2 \alpha\right) \leq 0
$$
is equivalent to
$$
1-2 \alpha \leq \lambda \leq 1 \quad \text { for every } \lambda \in \sigma(A).
$$
This yields $\sigma(A) \subset(-\infty, 1]$ and $\alpha \geq\left(1-\lambda_{\min }(A)\right) / 2$, as required.
\end{proof}

\begin{remark} Proposition~\ref{kSn} significantly extends Example~\ref{e:Id}.
\end{remark}

For skew-symmetric matrices, we have the following result.

\begin{proposition}\label{p:skew}
Let $A \in M_n(\mathbb{R})$ be skew-symmetric, i.e., $A^{\top}=-A$. Then $A$ is always conically averaged, and
$$
k(A)=\frac{1-\lambda_{\min }(A^2)}{2} .
$$
\end{proposition}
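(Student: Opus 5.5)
We will use the characterization in Lemma~\ref{l:char:con}, together with Corollary~\ref{c:char:con} and the zero case of Lemma~\ref{zero case}, exploiting that a skew-symmetric $A$ satisfies $\scal{x}{Ax}=0$ for all $x$. By linearity it suffices to test \eqref{e:average3} on $z:=x-y$ ranging over $\RR^n$. The inequality then reads
$$
\|Az-z\|^2\le 2k\left(\|z\|^2-\scal{z}{Az}\right)=2k\|z\|^2 .
$$
Skewness gives $\|Az-z\|^2=\|Az\|^2-2\scal{Az}{z}+\|z\|^2=\|Az\|^2+\|z\|^2$. Moreover, $\|Az\|^2=\scal{z}{A^{\top}Az}=-\scal{z}{A^2z}$. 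The condition therefore becomes
$$
\scal{z}{(\Id-A^2)z}\le 2k\|z\|^2 \quad\text{for all } z\in\RR^n .
$$

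Since $-A^2=A^{\top}A$ is symmetric positive semidefinite, $A^2\in\mathbb{S}^n$ has real nonpositive eigenvalues. Hence $\Id-A^2$ is symmetric with largest eigenvalue $1-\lambda_{\min}(A^2)\ge 1$. By the Rayleigh quotient, the condition holds if and only if $2k\ge 1-\lambda_{\min}(A^2)$. In particular, $k:=(1-\lambda_{\min}(A^2))/2\ge 1/2>0$ satisfies it. Lemma~\ref{l:char:con} then shows that $A$ is conically averaged with $k(A)\le (1-\lambda_{\min}(A^2))/2$.

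For the reverse inequality, we must check that $A$ is not conically $k$-averaged for any $k<(1-\lambda_{\min}(A^2))/2$.

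\begin{itemize}
\item For $k>0$, Lemma~\ref{l:char:con} is an equivalence, so the condition must hold. Testing it at an eigenvector $z$ of $A^2$ for $\lambda_{\min}(A^2)$ gives a contradiction.
\item For $k=0$, Lemma~\ref{zero case} would force $A=\Id+v$. This is impossible for a skew matrix, since $\scal{z}{Az}=0\neq\|z\|^2$ for $z\neq 0$. Alternatively, $k(A)\ge 1/2>0$ already follows from the positive case.
\end{itemize}

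Taking the infimum gives $k(A)=(1-\lambda_{\min}(A^2))/2$. We also need $n\ge1$, so that an eigenvector exists.

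There is no serious obstacle. The only points needing care are the identity $\|Az\|^2=-\scal{z}{A^2z}$, the symmetry of $A^2$ (which justifies taking $\lambda_{\min}(A^2)$ as a real number), and handling the boundary case $k=0$ separately, since Lemma~\ref{l:char:con} is stated only for $k>0$.
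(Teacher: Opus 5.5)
Your proof is correct and is essentially the paper's argument. The paper starts from the definition in the form $\|A-(1-\alpha)\Id\|_2\le\alpha$ and uses $A+A^\top=0$ to get $(A^\top-(1-\alpha)\Id)(A-(1-\alpha)\Id)=-A^2+(1-\alpha)^2\Id$. This yields the same condition $\lambda_{\max}(-A^2)\le 2\alpha-1$ that you reach through Lemma~\ref{l:char:con}. The only difference is that the norm formulation covers $\alpha=0$ in a single chain of equivalences, whereas your route needs the separate $k=0$ check, which you handle correctly.
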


\begin{proof}
For $\alpha \geq 0$ we have
$$
\begin{aligned}
& \|A-(1-\alpha) \mathrm{Id}\|_{2} \leq \alpha \\
\Leftrightarrow &\ \sqrt{\lambda_{\max }\left((A^{\top}-(1-\alpha) \mathrm{Id})(A-(1-\alpha) \mathrm{Id})\right)} \leq \alpha \\
\Leftrightarrow &\
 \lambda_{\max }\left(-A^2+(1-\alpha)^2\Id\right) \leq \alpha^2 \\
\Leftrightarrow &\ \lambda_{\max }\left(-A^2\right) \leq 2\alpha-1 \\
\Leftrightarrow &\ -\lambda_{\min }\left(A^2\right) \leq 2\alpha-1 \\
\Leftrightarrow &\ (1-\lambda_{\min }(A^2))/2 \leq \alpha.
\end{aligned}
$$
Note that $A^2$ is symmetric and negative semidefinite since $A$ is skew. Hence $\lambda_{\min }\left(A^2\right) \leq 0$, which implies that there always exists $\alpha \geq 0$ to make the above inequality hold. The chain of equivalences completes the proof.
\end{proof}

\subsection{General matrix $A$: when $\Fix A=\{0\}$}

Observe that $\Fix A=\{0\}$ if and only if $\Id -A$ is invertible, i.e., nonsingular.
We first give the formula to compute the comonotone value of nonsingular matrices. Recall
\begin{fact}\emph{\cite[Proposition 5.1]{BMW}}\label{linearmonofact}
Let $A \in \mathbb{R}^{n \times n}$ and let $\rho \in \mathbb{R}$. Then the following hold:
\begin{enumerate}
\item
$A$ is $\rho$-monotone $\Leftrightarrow \lambda_{\min }\left(A_S\right) \geq \rho$.
\item $A$ is $\rho$-comonotone $\Leftrightarrow \lambda_{\min }\left(A_S-\rho A^{\top} A\right) \geq 0$.
\end{enumerate}
\end{fact}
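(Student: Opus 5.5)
This is Fact~\ref{linearmonofact}, which the paper cites from \cite{BMW}. Both parts follow directly from the definitions together with the linearity of $A$, so the plan below is short.

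First, linearity reduces the two-point conditions to one-point conditions. Since $A$ is single-valued and linear, every pair $(x,Ax),(y,Ay)$ of graph points satisfies $u-v=A(x-y)$. Setting $z:=x-y$, which ranges over all of $\RR^n$, $\rho$-monotonicity becomes
\[
(\forall z\in\RR^n)\quad \scal{z}{Az}\ge \rho\|z\|^2,
\]
and $\rho$-comonotonicity becomes
\[
(\forall z\in\RR^n)\quad \scal{z}{Az}\ge \rho\|Az\|^2 .
\]

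Second, the skew part of $A$ drops out of the quadratic form. For every $z$ we have $\scal{z}{Az}=\scal{z}{A^{\top}z}$, hence $\scal{z}{Az}=\scal{z}{A_S z}$. In addition, $\|Az\|^2=\scal{z}{A^{\top}Az}$.

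For part (i), the monotonicity condition then reads $\scal{z}{(A_S-\rho\Id)z}\ge 0$ for all $z$. Because $A_S-\rho\Id$ is symmetric, this holds exactly when $\lambda_{\min}(A_S-\rho\Id)\ge 0$, i.e., when $\lambda_{\min}(A_S)\ge\rho$. This uses the Rayleigh quotient characterization $\lambda_{\min}(S)=\min_{\|z\|=1}\scal{z}{Sz}$ for $S\in\mathbb{S}^n$.

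For part (ii), the comonotonicity condition reads $\scal{z}{(A_S-\rho A^{\top}A)z}\ge 0$ for all $z$. The matrix $A_S-\rho A^{\top}A$ is symmetric, since $A^{\top}A$ is symmetric. The same Rayleigh quotient characterization therefore shows the condition is equivalent to $\lambda_{\min}(A_S-\rho A^{\top}A)\ge 0$.

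There is no real obstacle here. The only points needing care are two: $z=x-y$ genuinely covers all of $\RR^n$, which holds because $\dom A=\RR^n$, and the quadratic forms must be replaced by symmetric matrices before the eigenvalue criterion is invoked.
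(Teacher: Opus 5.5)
Your proof is correct and complete. Linearity lets you reduce to $z=x-y\in\RR^n$. You then rewrite $\scal{z}{Az}=\scal{z}{A_Sz}$ and $\|Az\|^2=\scal{z}{A^{\top}Az}$, and apply the Rayleigh-quotient characterization of $\lambda_{\min}$ to the symmetric matrices $A_S-\rho\Id$ and $A_S-\rho A^{\top}A$; this is the standard argument. The paper gives no proof of its own here, simply citing \cite[Proposition 5.1]{BMW}, so there is nothing further to compare.
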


\begin{proposition}\label{linear mc}
Let $A \in M_n(\mathbb{R})$. Then the following hold:
\begin{enumerate}
\item\label{rho:mono1} $m(A)=\lambda_{\min }(A_S)$.
\item\label{rho:mono2} $c(A)=\lambda_{\min }((A^{-1})_S)$ if $A$ is invertible.
\end{enumerate}
\end{proposition}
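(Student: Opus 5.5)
For part (i) I will use Fact~\ref{linearmonofact}(i) directly. By that fact, $A$ is $\rho$-monotone if and only if $\lambda_{\min}(A_S)\ge \rho$. So the set $\{\rho\in\RR \mid A \text{ is } \rho\text{-monotone}\}$ is exactly the half-line $(-\infty,\lambda_{\min}(A_S)]$. Its supremum is $\lambda_{\min}(A_S)$, which is then $m(A)$ by Definition~\ref{d:mono:val}.

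For part (ii) I will use duality, Proposition~\ref{bigpropforcm}\ref{i:dualthing}, which gives $c(A)=m(A^{-1})$. When $A$ is invertible, the set-valued inverse of $A$ is the single-valued linear map given by the matrix $A^{-1}$, since $\gra A^{-1}=\{(Ax,x)\}=\{(u,A^{-1}u)\}$. Applying part (i) to the matrix $A^{-1}$ then yields $c(A)=\lambda_{\min}((A^{-1})_S)$.

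To avoid relying on the standing finiteness hypothesis $m,c>-\infty$ in Proposition~\ref{bigpropforcm}, I will also check the duality directly. For linear maps with $u=Ax$, $v=Ay$ and $w=u-v$, we have $x-y=A^{-1}w$. Hence
\[
\langle x-y,u-v\rangle\ge\rho\|u-v\|^2 \iff \langle A^{-1}w,w\rangle\ge \rho\|w\|^2 .
\]
Because $A$ is surjective, $w$ ranges over all of $\RR^n$. This is exactly the statement that $A^{-1}$ is $\rho$-monotone, so the two sets of admissible $\rho$ coincide.

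A second consistency check comes from Fact~\ref{linearmonofact}(ii). The condition $A_S-\rho A^\top A\succeq0$ is congruent, via substituting $x=A^{-1}w$, to $(A^{-1})_S-\rho\Id\succeq0$. This uses $A^{-\top}A_SA^{-1}=(A^{-1})_S$, which holds because $A^{-\top}AA^{-1}=A^{-\top}$ and $A^{-\top}A^\top A^{-1}=A^{-1}$.

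I do not expect a real obstacle. The only point needing care is to justify that the supremum is attained and finite; this follows because the admissible set is a closed half-line determined by a finite eigenvalue.
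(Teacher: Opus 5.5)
Your proposal is correct and follows the paper's proof. Part (i) reads $m(A)$ off Fact~\ref{linearmonofact}(i) as the supremum of the half-line $(-\infty,\lambda_{\min}(A_S)]$, and part (ii) combines the duality $c(A)=m(A^{-1})$ from Proposition~\ref{bigpropforcm}\ref{i:dualthing} with part (i) applied to $A^{-1}$. Your direct check of the duality for invertible linear maps, and the congruence identity $A^{-\top}A_SA^{-1}=(A^{-1})_S$, are both sound extras that the paper's argument does not need.
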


\begin{proof}
\ref{rho:mono1}: In view of Definition~\ref{d:mono:val}, we
apply Fact \ref{linearmonofact} and \eqref{e:rho:mono}.

\ref{rho:mono2}:
Combine \ref{rho:mono1} and $c(A)=m\left(A^{-1}\right)$ in Proposition~\ref{bigpropforcm}\ref{i:dualthing}.
\end{proof}

Under the nonsingular condition, i.e., $\Id-A$ being invertible, we can provide a method to determine whether a square matrix is conically
averaged and compute its modulus of conical averagedness. This result also plays a central role
in later Subsection~\ref{s:sub:nonzero}, Section~\ref{s:angleb}, and Subsection~\ref{s:sub:continuity}.

\begin{theorem}\emph{\textbf{(matrix with zero fixed point only)}}\label{nonsingular theorem}
Let $A \in M_n(\mathbb{R})$ and suppose that $\Id-A$ is invertible (i.e., $1 \notin \sigma(A)$ ). Then $A$ is conically averaged if and only if $\left((\mathrm{Id}-A)^{-1}\right)_S$ is positive definite, in which case the formula holds:
$$k(A)=\frac{1}{2} \frac{1}{\lambda_{\min }\left(\left((\Id-A)^{-1}\right)_S\right)}.$$
\end{theorem}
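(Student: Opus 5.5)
The plan is to reduce everything to the comonotone value of $B:=\Id-A$ and then use the linear formulas from Proposition~\ref{linear mc}. Since $\Id-A$ is invertible, $B$ is invertible, and Proposition~\ref{linear mc}\ref{rho:mono2} gives
$$c(\Id-A)=c(B)=\lambda_{\min}\big((B^{-1})_S\big)=\lambda_{\min}\big(((\Id-A)^{-1})_S\big).$$
Once this is in hand, both the characterization and the formula should follow from Fact~\ref{kcfact} and Proposition~\ref{kcprop!}.

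For the characterization, I would first note that $k(A)=0$ is impossible. By Lemma~\ref{zero case}, $k(A)=0$ would force $A=\Id+v$, and linearity then gives $A=\Id$, contradicting $1\notin\sigma(A)$. Hence $A$ is conically averaged if and only if $A$ is conically $\alpha$-averaged for some $\alpha\in(0,+\infty)$ (a conically $0$-averaged map is $\Id$, which is excluded). By Fact~\ref{kcfact}, this holds if and only if $\Id-A$ is $\rho$-comonotone for some $\rho>0$.

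It then remains to show that $\Id-A$ is $\rho$-comonotone for some $\rho>0$ exactly when $c(\Id-A)>0$. The forward direction is immediate from the definition of $c$ as a supremum. For the reverse direction, $\rho$-comonotonicity of $B$ is equivalent to $\rho$-monotonicity of $B^{-1}$ (substitute $u=Bx$, $x=B^{-1}u$). By Fact~\ref{linearmonofact}, $B^{-1}$ is $\rho$-monotone whenever $\rho\le\lambda_{\min}((B^{-1})_S)$, so the supremum is attained and any $\rho\in(0,c(B)]$ works. The condition $c(\Id-A)>0$ reads $\lambda_{\min}(((\Id-A)^{-1})_S)>0$, that is, $((\Id-A)^{-1})_S$ is positive definite. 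This proves the equivalence.

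For the formula, once $A$ is conically averaged, Proposition~\ref{kcprop!} gives
$$k(A)=\frac{1}{2}\,\frac{1}{c(\Id-A)},$$
and substituting the expression for $c(\Id-A)$ obtained above yields the stated formula. The only delicate points are excluding the case $k(A)=0$, which uses the linearity of $A$, and the attainment of the supremum in $c(\Id-A)$, which Fact~\ref{linearmonofact} settles. I expect no real obstacle beyond carefully tracking the duality $c(B)=m(B^{-1})$ from Proposition~\ref{bigpropforcm}\ref{i:dualthing}.
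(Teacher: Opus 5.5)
Your proposal is correct and follows essentially the same route as the paper. The paper rules out $k(A)=0$ via Lemma~\ref{zero case}, then combines $k(T)=\frac{1}{2c(\Id-T)}$ from Proposition~\ref{kcprop!} with $c(B)=\lambda_{\min}((B^{-1})_S)$ from Proposition~\ref{linear mc}\ref{rho:mono2}, exactly as you do. Your explicit use of Fact~\ref{kcfact} for the ``if'' direction spells out a step the paper leaves implicit; attainment of the supremum is not actually needed there, since $c(B)>0$ already gives some admissible $\rho>0$ by the definition of the supremum.
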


\begin{proof}
Observe that $k(A)>0$. Indeed, if $k(A)=0$, then by Lemma~\ref{zero case} we have $A=\Id$,
so $\Id-A=0$ is not invertible, which contradicts the
assumption. The result follows by
combining $k(T)=1/(2c(\operatorname{Id}-T))$ in Proposition~\ref{kcprop!} and
Proposition~\ref{linear mc}\ref{rho:mono2}.
\end{proof}

A simple example can serve to illustrate Theorem~\ref{nonsingular theorem}.
\begin{example}
Suppose that $X=\mathbb{R}^2$ and consider the matrix
$$
M:=\frac{1}{2}\left[\begin{array}{ll}
1 & 0 \\
1 & 0
\end{array}\right].
$$Then $$
\mathrm{Id}-M=\left[\begin{array}{ll}
\frac{1}{2} & 0 \\
-\frac{1}{2} & 1
\end{array}\right],
$$ which is invertible and $$
(\mathrm{Id}-M)^{-1}=\left[\begin{array}{ll}
2 & 0 \\
1 & 1
\end{array}\right].
$$ Thus, $$
\left((\mathrm{Id}-M)^{-1}\right)_S=\left[\begin{array}{cc}
2 & \frac{1}{2} \\
\frac{1}{2} & 1
\end{array}\right]
$$ and $$
\lambda_{\min }\left(\left((\operatorname{Id}-M)^{-1}\right)_S\right)=\frac{3-\sqrt{2}}{2}.
$$
Therefore,
 $$
k(M)=\frac{1}{2} \frac{1}{\lambda_{\min }\left(\left((\operatorname{Id}-M)^{-1}\right)_S\right)}=\frac{3+\sqrt{2}}{7}.
$$This result agrees with \cite[Example 3.5]{BBM}.
\end{example}

\subsection{General matrix $A$: when $\Fix A\varsupsetneq\{0\}$}\label{s:sub:nonzero}
Observe that $\Fix A\varsupsetneq\{0\}$ if and only if
$\Id-A$ is not invertible.
We now consider how to determine whether a matrix $A$ is conically averaged and compute $k(A)$
when $1 \in \sigma(A)$. A possible approach might be to choose a sequence of matrices $(A_m)_{m\in\NN}$ with $1 \notin \sigma(A_m)$ and $A_m \rightarrow A$, then compute $k(A)$ through $k(A_m)$. Nonetheless, the following example shows that this approach may fail when the sequence is not chosen appropriately.

\begin{example}
$k(\Id) \neq \lim _{\varepsilon \rightarrow 0^{+}}k(\Id+\varepsilon \Id)$.
\end{example}

\begin{proof}
By Example~\ref{e:Id}, we have
 $$
k(\alpha \mathrm{Id})=\begin{cases}
\frac{1-\alpha}{2} & \text{if $\alpha \leq 1$,} \\
+\infty & \text{if $\alpha>1$,}
\end{cases}
$$
which gives $k(\Id)=0$ while $k((1+\varepsilon)\Id)=+\infty$ for every $\varepsilon>0$.
\end{proof}

Before investigating what type of sequence is appropriate, we present the following basic result.

\begin{proposition}\label{usefullinear}
Let $A \in M_n(\mathbb{R})$ be conically averaged. Then the following hold:
\begin{enumerate}
\item \label{i:linear1}
 $k(A)=0$ if and only if $A=\Id$.
\item\label{i:linear2}
$A$ is conically $k(A)$-averaged.
\item\label{i:linear3}
For any $k \in [0, +\infty)$, $A$ is conically $k$-averaged if and only if
\begin{equation}\label{e:linear:char1}
\left(\forall z \in \mathbb{R}^n\right) \quad\|A z\|^2+(1-2 k)\|z\|^2 \leq 2(1-k)\langle z, A z\rangle;
\end{equation}
equivalently,
\begin{equation}\label{e:linear:char2}
\left(\forall z \in \mathbb{R}^n\right) \quad\|z-Az\|^2 \leq 2k(\|z\|^2-\langle z, A z\rangle).
\end{equation}
\end{enumerate}
\end{proposition}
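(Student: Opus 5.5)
The plan is to specialize Lemma~\ref{zero case} and Lemma~\ref{l:char:con} to the linear setting, using linearity to replace the pair $(x,y)$ by the single vector $z:=x-y$.

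For \ref{i:linear1}, Lemma~\ref{zero case}(iii) gives $k(A)=0$ if and only if $A=\Id+v$ for some $v\in\RR^n$. Since $A$ is linear, evaluating at $0$ gives $v=A0-0=0$, so $A=\Id$. Conversely, $k(\Id)=0$ by Example~\ref{e:Id} with $\alpha=1$.

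For \ref{i:linear2}, I split into two cases. If $k(A)>0$, Lemma~\ref{zero case}(ii) applies directly. If $k(A)=0$, then $A=\Id$ by \ref{i:linear1}, and $\Id=(1-0)\Id+0\cdot N$ for any nonexpansive $N$, for instance $N=\Id$. So $\Id$ is conically $0$-averaged, as the definition allows $k\in[0,+\infty)$.

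For \ref{i:linear3}, I treat $k>0$ and $k=0$ separately.
\begin{itemize}
\item[(a)] When $k>0$, Lemma~\ref{l:char:con} characterizes conical $k$-averagedness by \eqref{e:average3} for all $x,y$. By linearity $Ax-Ay=Az$ with $z=x-y$, and every $z\in\RR^n$ arises this way (take $y=0$). Hence \eqref{e:average3} is exactly \eqref{e:linear:char2}.
\item[(b)] Expanding $\|z-Az\|^2=\|z\|^2-2\langle z,Az\rangle+\|Az\|^2$ and rearranging shows that \eqref{e:linear:char2} is algebraically identical to \eqref{e:linear:char1}, for every $k$, including $k=0$.
\item[(c)] When $k=0$, inequality \eqref{e:linear:char2} reads $\|z-Az\|^2\le 0$ for all $z$, which is equivalent to $A=\Id$. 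On the other hand, $A=(1-0)\Id+0\cdot N=\Id$ is the only conically $0$-averaged mapping. So both sides of the equivalence say $A=\Id$.
\end{itemize}

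No step is a genuine obstacle. The only points needing care are the boundary value $k=0$, where Lemma~\ref{l:char:con} does not apply and the definition must be used directly, and noting that the translation vector $v$ in Lemma~\ref{zero case} must vanish for a linear map.
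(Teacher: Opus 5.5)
Your proposal is correct and follows essentially the paper's route. Parts (i) and (ii) come from Lemma~\ref{zero case}, with linearity forcing $v=0$, and part (iii) comes from the two-point characterization (\eqref{e:average2}, equivalently Lemma~\ref{l:char:con}) for $k>0$, with $k=0$ handled directly; you simply write out details the paper leaves implicit, namely the $k(A)=0$ case of (ii) and the substitution $z=x-y$.
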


\begin{proof}
\ref{i:linear1} and \ref{i:linear2}: Apply Lemma~\ref{zero case} to the linear operator $A$.

\ref{i:linear3}: When $k>0$, apply the characterization \eqref{e:average2}. When $k=0$, use \ref{i:linear1}.
\end{proof}

The following lemma is crucial, as it provides an \emph{appropriate} class of matrices to compute the modulus of
a conically averaged matrix.

\begin{lemma}\label{singular lemma}
Let $A \in M_n(\mathbb{R})$ be conically averaged. Then $\lim _{\varepsilon \rightarrow 0^+} k(A-\varepsilon \Id)$ exists, and $$k(A)=\lim _{\varepsilon \rightarrow 0^+} k(A-\varepsilon \Id).$$
\end{lemma}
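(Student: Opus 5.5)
Lemma~\ref{l:minus} already gives the upper bound $k(A-\varepsilon\Id)\le k(A)+\varepsilon$ for every $\varepsilon>0$, so $\varlimsup_{\varepsilon\to0^+}k(A-\varepsilon\Id)\le k(A)$. The work is therefore the reverse inequality $k(A)\le\varliminf_{\varepsilon\to0^+}k(A-\varepsilon\Id)$. Once both hold, the limit exists and equals $k(A)$. If $A=\Id$, then $A-\varepsilon\Id=(1-\varepsilon)\Id$, and Example~\ref{e:Id} gives $k=\varepsilon/2\to0=k(\Id)$. So we may assume $A\neq\Id$, in which case $k(A)>0$ by Proposition~\ref{usefullinear}\ref{i:linear1}.

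For the lower bound we use the characterization \eqref{e:linear:char2}. Set $k_\varepsilon:=k(A-\varepsilon\Id)$, which is finite by Lemma~\ref{l:minus}. By Proposition~\ref{usefullinear}\ref{i:linear2}, $A-\varepsilon\Id$ is conically $k_\varepsilon$-averaged, so for every $z\in\RR^n$
\begin{equation*}
\|z-Az+\varepsilon z\|^2\le 2k_\varepsilon\big(\|z\|^2-\langle z,Az\rangle+\varepsilon\|z\|^2\big).
\end{equation*}
Let $L:=\varliminf_{\varepsilon\to0^+}k_\varepsilon$. Since $0\le k_\varepsilon\le k(A)+\varepsilon$, $L$ is a finite number. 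Fix $z$ and pass to the $\liminf$ along a sequence $\varepsilon_j\to0^+$ with $k_{\varepsilon_j}\to L$. The left side tends to $\|z-Az\|^2$. The right side tends to $2L(\|z\|^2-\langle z,Az\rangle)$, because the bracket converges and $k_{\varepsilon_j}$ converges; this is the mechanism behind Fact~\ref{f:l:supinf}. Hence
\begin{equation*}
(\forall z\in\RR^n)\quad \|z-Az\|^2\le 2L\big(\|z\|^2-\langle z,Az\rangle\big).
\end{equation*}
Corollary~\ref{c:char:con} then gives $k(A)\le L$. This step also covers $L=0$: in that case the inequality forces $A=\Id$, which we have excluded.

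Combining the two bounds gives $k(A)\le\varliminf k_\varepsilon\le\varlimsup k_\varepsilon\le k(A)$, so the limit exists and equals $k(A)$. The main obstacle is the lower bound, specifically making sure the limit passage is legitimate. Two points need checking: the finiteness of $L$, which is supplied by Lemma~\ref{l:minus}, and the fact that $k_\varepsilon$ is attained, which is Proposition~\ref{usefullinear}\ref{i:linear2}. We need the latter in the $\varepsilon$-dependent inequality; otherwise we would have to run the argument with $k_\varepsilon+\delta$ and then let $\delta\to0$, which also works.
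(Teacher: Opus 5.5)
Your proof is correct and follows essentially the paper's argument: the upper bound $\varlimsup_{\varepsilon\to0^+}k(A-\varepsilon\Id)\le k(A)$ comes from Lemma~\ref{l:minus}, and the lower bound comes from passing to the $\liminf$ in the inequality expressing that $A-\varepsilon\Id$ is conically $k(A-\varepsilon\Id)$-averaged. The differences are cosmetic: you take a sequence realizing the $\liminf$ rather than citing Fact~\ref{f:l:supinf}, you close with Corollary~\ref{c:char:con} rather than Proposition~\ref{usefullinear}\ref{i:linear3}, and your separate treatment of $A=\Id$ is harmless but not needed.
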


\begin{proof}
By Lemma~\ref{l:minus},
$A-\varepsilon \mathrm{Id}$ is conically $(k(A)+\varepsilon)$-averaged. Thus $k(A-\varepsilon \Id) \leq k(A)+\varepsilon$. Taking limit superior we have
\begin{equation}\label{4.9eq}
\varlimsup_{\varepsilon \rightarrow 0^{+}} k(A-\varepsilon \Id) \leq k(A).
\end{equation}

Applying Proposition~\ref{usefullinear}\ref{i:linear2}\&\ref{i:linear3} to the conically averaged operator $A-\varepsilon \mathrm{Id}$, we have
\begin{equation}\label{e:A-eId}
(\forall z\in\mathbb{R}^n)\ \|z-(A-\varepsilon \Id) z\|^2\leq 2k(A-\varepsilon\Id)(\|z\|^2-\langle z,(A-\varepsilon \Id) z\rangle).
\end{equation}
Note that $\varliminf_{\varepsilon \rightarrow 0^{+}} k(A-\varepsilon \mathrm{Id})$ is finite due to its nonnegativity and \eqref{4.9eq}. Taking $\varliminf$ when $\varepsilon \rightarrow 0^{+}$ in \eqref{e:A-eId}, we obtain
$$
\begin{aligned}
\|z-Az\|^2 & \leq 2\varliminf_{\varepsilon\rightarrow 0^+}[k(A-\varepsilon\Id)(\|z\|^2-\langle z,(A-\varepsilon \Id) z\rangle)] \\& \leq 2\varliminf_{\varepsilon\rightarrow 0^+}k(A-\varepsilon\Id) (\|z\|^2-\langle z,A z\rangle)
\end{aligned}
$$
due to the continuity of norm and inner product and Fact~\ref{f:l:supinf}. It follows from Proposition \ref{usefullinear}\ref{i:linear3} again that $A$ is conically $\varliminf_{\varepsilon \rightarrow 0^{+}}k(A-\varepsilon \mathrm{Id})$-averaged, so
\begin{equation}\label{e:monday}
k(A) \leq \varliminf_{\varepsilon \rightarrow 0^{+}}k(A-\varepsilon \mathrm{Id}).
\end{equation}
Combining \eqref{4.9eq} and \eqref{e:monday} gives
$$
k(A) \leq \varliminf_{\varepsilon \rightarrow 0^{+}}k(A-\varepsilon \mathrm{Id}) \leq \varlimsup_{\varepsilon \rightarrow 0^{+}}k(A-\varepsilon \mathrm{Id}) \leq k(A),
$$ and all inequalities turn into equalities.
\end{proof}

We will generalize Lemma \ref{singular lemma} to a more substantial nonlinear version in Section~\ref{s:nonlinear}.

The singularity of matrices arises only in a discrete manner, which motivates the following definitions.

\begin{definition}\label{d:gammaA}
Let $A \in M_n(\mathbb{R})$.
\begin{enumerate}
\item\label{i:gamma1}
Define $\gamma(A):=\inf \{\lambda \in \sigma(A) \cap \mathbb{R} \mid \lambda>1\}-1$ as the distance between $1$ and the smallest real eigenvalue of $A$ greater than $1$.
\item\label{i:gamma2}
Define the function $\phi_A: (0, \gamma(A)) \rightarrow \RR$ by $$\phi_A(\varepsilon):=\lambda_{\min }\left(\left(((1+\varepsilon) \mathrm{Id}-A)^{-1}\right)_S\right).$$
\end{enumerate}
\end{definition}

\begin{remark}
If $A$ has a real eigenvalue greater than $1$, then the infimum in $\gamma(A)$ can be replaced by minimum since $\sigma(A)$ is a finite set ($A$ has at most $n$ different eigenvalues on the complex plane). If $A$ has no real eigenvalue greater than $1$, then $\gamma(A)=+\infty$ as $\inf \varnothing=+\infty$. Moreover, we have $\gamma(A)>0$ and $(1,1+\gamma(A)) \cap \sigma(A)=\varnothing$. This implies that $(1+\varepsilon) \Id-A$ is invertible for any $0<\varepsilon<\gamma(A)$. Therefore, $\phi_A(\varepsilon)=\lambda_{\min }\left(\left(((1+\varepsilon) \mathrm{Id}-A)^{-1}\right)_S\right)$ is a well-defined real-valued function on an open interval, and we can consider $\lim _{\varepsilon \rightarrow 0^{+}}\phi_A(\varepsilon)$.
\end{remark}

Armed with Theorem~\ref{nonsingular theorem} and Lemma~\ref{singular lemma},
we are now ready to state one of our main results.

\begin{theorem}\emph{\textbf{(conically averaged matrix: characterization I)}}\label{mainresult1}
Let $A \in M_n(\mathbb{R})$. Then $A$ is conically averaged if and only if $\lim _{\varepsilon \rightarrow 0^{+}} \varphi_A(\varepsilon)$ exists and belongs to $(0, +\infty]$, in which case the formula holds: $$
k(A)=\frac{1}{2} \frac{1}{\lim _{\varepsilon \rightarrow 0^+} \varphi_A(\varepsilon)}.
$$
\end{theorem}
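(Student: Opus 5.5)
Write $A_\varepsilon:=A-\varepsilon\Id$. Then $\Id-A_\varepsilon=(1+\varepsilon)\Id-A$, which is invertible for $0<\varepsilon<\gamma(A)$. For such $\varepsilon$, Theorem~\ref{nonsingular theorem} applies to $A_\varepsilon$: it is conically averaged if and only if $\varphi_A(\varepsilon)>0$, and in that case
$$k(A_\varepsilon)=\frac{1}{2\varphi_A(\varepsilon)}.$$
If $A_\varepsilon$ is not conically averaged, then $k(A_\varepsilon)=+\infty$. The proof therefore reduces to relating $k(A)$ with the behaviour of $k(A_\varepsilon)$ as $\varepsilon\to0^+$.

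($\Rightarrow$) Suppose $A$ is conically averaged. By Lemma~\ref{l:minus}, each $A_\varepsilon$ is conically averaged, so Theorem~\ref{nonsingular theorem} gives $\varphi_A(\varepsilon)>0$ and $k(A_\varepsilon)=1/(2\varphi_A(\varepsilon))$ for every $\varepsilon\in(0,\gamma(A))$. By Lemma~\ref{singular lemma}, $k(A_\varepsilon)\to k(A)\in[0,+\infty)$. The map $t\mapsto 1/(2t)$ is a homeomorphism from $(0,+\infty]$ onto $[0,+\infty)$, with the convention $1/\infty=0$. Hence $\lim_{\varepsilon\to0^+}\varphi_A(\varepsilon)=1/(2k(A))$ exists and lies in $(0,+\infty]$. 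The value $+\infty$ occurs exactly when $k(A)=0$, i.e.\ $A=\Id$ by Proposition~\ref{usefullinear}\ref{i:linear1}. This also yields the formula.

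($\Leftarrow$) Suppose $L:=\lim_{\varepsilon\to0^+}\varphi_A(\varepsilon)\in(0,+\infty]$. Then $\varphi_A(\varepsilon)>0$ for small $\varepsilon$, so these $A_\varepsilon$ are conically averaged with $k(A_\varepsilon)=1/(2\varphi_A(\varepsilon))\to K:=1/(2L)\in[0,+\infty)$. I cannot invoke Lemma~\ref{singular lemma} here, since it presupposes that $A$ is conically averaged. Instead I will repeat its lower-limit argument directly. By Proposition~\ref{usefullinear}\ref{i:linear2}\&\ref{i:linear3}, for each small $\varepsilon$ and every $z\in\RR^n$,
$$\|z-A_\varepsilon z\|^2\le 2k(A_\varepsilon)\big(\|z\|^2-\langle z,A_\varepsilon z\rangle\big).$$
Letting $\varepsilon\to0^+$, where all terms converge, gives
$$\|z-Az\|^2\le 2K\big(\|z\|^2-\langle z,Az\rangle\big)\quad\text{for all } z\in\RR^n.$$
By linearity this is exactly the hypothesis of Corollary~\ref{c:char:con} with $x-y=z$. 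Hence $k(A)\le K<\infty$, so $A$ is conically averaged. The formula then follows from the forward direction.

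The main obstacle is the converse direction. I must avoid circularity with Lemma~\ref{singular lemma}, and I must treat the case $L=+\infty$ (i.e.\ $K=0$). Corollary~\ref{c:char:con} covers $K=0$, since it then forces $A=\Id$. A minor point is that the convention $1/(2\cdot\infty)=0$ must be used consistently in the statement.
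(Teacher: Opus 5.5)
Your proposal is correct and follows essentially the same route as the paper. For the forward direction you apply Theorem~\ref{nonsingular theorem} to $A-\varepsilon\Id$ together with Lemma~\ref{singular lemma}; for the converse you pass to the limit in the linear characterization inequality and then recover the formula from the forward direction. Closing the converse with Corollary~\ref{c:char:con}, instead of Proposition~\ref{usefullinear}\ref{i:linear3} (which is stated under the hypothesis that $A$ is already conically averaged), is slightly cleaner than the paper's step, and it correctly covers the case $K=0$.
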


\begin{proof}
``$\Rightarrow$":
Suppose $A$ is conically averaged. For any $0<\varepsilon<\gamma(A)$, we have $\Id-(A-\varepsilon\Id)=(1+\varepsilon) \mathrm{Id}-A$ is invertible. Moreover, Lemma~\ref{l:minus} implies that $A-\varepsilon\Id$ is conically averaged since $A$ is conically averaged. Therefore, applying Theorem \ref{nonsingular theorem} we have $$
k(A-\varepsilon \mathrm{Id})=\frac{1}{2} \frac{1}{\lambda_{\min }\left(\left(((1+\varepsilon) \mathrm{Id}-A)^{-1}\right)_S\right)}=\frac{1}{2} \frac{1}{\varphi_A(\varepsilon)} .
$$
Since $A$ is conically averaged, applying Lemma \ref{singular lemma} we have $\lim _{\varepsilon \rightarrow 0^{+}} k(A-\varepsilon \mathrm{Id})$ exists and $$
k(A)=\lim _{\varepsilon \rightarrow 0^{+}} k(A-\varepsilon \mathrm{Id})=\frac{1}{2} \frac{1}{\lim _{\varepsilon \rightarrow 0^{+}} \varphi_A(\varepsilon)}\in[0, +\infty).
$$
Thus $\lim _{\varepsilon \rightarrow 0^{+}} \varphi_A(\varepsilon)$ exists and belongs to $(0, +\infty]$, in which case the formula holds.

``$\Leftarrow$": Suppose $\lim _{\varepsilon \rightarrow 0^{+}} \varphi_A(\varepsilon)$ exists and belongs to $(0, +\infty]$. Then by the property of one-sided limit, there exists $0<\delta<\gamma(A)$ such that for any $\varepsilon \in (0, \delta)$: $\varphi_A(\varepsilon)>0$, i.e., $(((1+\varepsilon) \mathrm{Id}-A)^{-1})_S$ is positive definite. Thus $(1+\varepsilon) \mathrm{Id}-A=\Id-(A-\varepsilon\Id)$ is invertible. Applying Theorem \ref{nonsingular theorem} we have $A-\varepsilon\Id$ is conically averaged for any $\varepsilon \in (0, \delta)$ and $$
k(A-\varepsilon\Id)=\frac{1}{2} \frac{1}{\lambda_{\min }\left(\left(((1+\varepsilon)\operatorname{Id}-A)^{-1}\right)_S\right)}=\frac{1}{2} \frac{1}{\varphi_A(\varepsilon)}.
$$
Since $\lim _{\varepsilon \rightarrow 0^{+}} \varphi_A(\varepsilon) \in (0, +\infty]$, taking limit we have
\begin{equation}\label{e:mod:sat}
\lim _{\varepsilon \rightarrow 0^{+}} k(A-\varepsilon \mathrm{Id})=\frac{1}{2} \frac{1}{\lim _{\varepsilon \rightarrow 0^{+}} \varphi_A(\varepsilon)}\in[0,+\infty).
\end{equation}
Applying Proposition \ref{usefullinear}\ref{i:linear3} to the conically averaged operator $A-\varepsilon \mathrm{Id}$, we have
$$(\forall z \in \mathbb{R}^n)\
\|(A-\varepsilon \Id) z\|^2+\left(1-2 k(A-\varepsilon \Id)\right)\|z\|^2 \leq 2(1-k(A-\varepsilon \Id))\langle z,(A-\varepsilon \Id) z\rangle.
$$
When $\varepsilon \rightarrow 0^{+}$, we obtain
\begin{equation}\label{e:linear:one}
(\forall z \in \mathbb{R}^n)\ \|A z\|^2+(1-2 \lim _{\varepsilon \rightarrow 0^{+}} k(A-\varepsilon \mathrm{Id}))\|z\|^2 \leq 2(1-\lim _{\varepsilon \rightarrow 0^{+}} k(A-\varepsilon \mathrm{Id}))\langle z,A z\rangle
\end{equation}
due to the continuity of norm and inner product.
Note that we already proved $\lim_{\varepsilon \rightarrow 0^{+}} k(A-\varepsilon \mathrm{Id}) \in[0,+\infty)$
by \eqref{e:mod:sat}, and that \eqref{e:linear:one} holds. Hence $A$ is conically $\lim_{\varepsilon \rightarrow 0^{+}}k(A-\varepsilon \mathrm{Id})$-averaged
by Proposition \ref{usefullinear}\ref{i:linear3} again.
Altogether, we complete the proof.
\end{proof}

\begin{corollary}\emph{\textbf{(matrix with nonzero fixed point: formula I)}}\label{singular theorem}
Let $A \in M_n(\mathbb{R})$ and suppose that $\mathrm{Id}-A$ is not invertible (i.e., $1\in \sigma(A)$). Then $A$ is conically averaged if and only if $\lim _{\varepsilon \rightarrow 0^+} \lambda_{\min }\left(\left(((1+\varepsilon) \operatorname{Id}-A)^{-1}\right)_S\right)$ exists and belongs to $(0, +\infty]$, in which case the formula holds: $$
k(A)=\frac{1}{2} \frac{1}{\lim _{\varepsilon \rightarrow 0^+} \lambda_{\min }\left(\left(((1+\varepsilon) \mathrm{Id}-A)^{-1}\right)_S\right)}.
$$
\end{corollary}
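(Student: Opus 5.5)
This corollary is a direct specialization of Theorem~\ref{mainresult1}, so I would deduce it from that theorem rather than prove it from scratch. Theorem~\ref{mainresult1} has no hypothesis on whether $1\in\sigma(A)$: it holds for every $A\in M_n(\mathbb{R})$. The plan is therefore to take $A$ with $\Id-A$ singular, unfold the definition of $\varphi_A$ from Definition~\ref{d:gammaA}, and read off the statement.

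The steps are as follows. First, recall that $\gamma(A)>0$, and that for every $\varepsilon\in(0,\gamma(A))$ the matrix $(1+\varepsilon)\Id-A$ is invertible (see the remark following Definition~\ref{d:gammaA}). Hence
\[
\varphi_A(\varepsilon)=\lambda_{\min}\left(\left(((1+\varepsilon)\Id-A)^{-1}\right)_S\right)
\]
is well defined on $(0,\gamma(A))$, and the one-sided limit as $\varepsilon\to0^+$ makes sense. Second, invoke Theorem~\ref{mainresult1}. It says that $A$ is conically averaged if and only if $\lim_{\varepsilon\to0^+}\varphi_A(\varepsilon)$ exists in $(0,+\infty]$, and in that case $k(A)=\frac{1}{2}\big(\lim_{\varepsilon\to0^+}\varphi_A(\varepsilon)\big)^{-1}$, with the convention $1/(+\infty)=0$. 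Substituting the definition of $\varphi_A$ gives exactly the stated equivalence and formula.

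There is no real obstacle here. The only point worth a remark is why the hypothesis $1\in\sigma(A)$ appears at all. It marks the case not already covered by Theorem~\ref{nonsingular theorem}, where one could evaluate at $\varepsilon=0$ directly. In the singular case $\varphi_A$ is undefined at $0$, so the limit is genuinely needed. I would also check that the value $+\infty$ of the limit is consistent with Proposition~\ref{usefullinear}\ref{i:linear1}: it forces $k(A)=0$, hence $A=\Id$, which indeed satisfies $1\in\sigma(A)$. So the case $+\infty$ really occurs under this hypothesis, and the corollary as stated is not vacuous there.
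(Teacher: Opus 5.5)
Your proposal is correct and matches the paper. The paper gives no separate proof of this corollary, because it is just Theorem~\ref{mainresult1} specialized to the case $1\in\sigma(A)$, with $\varphi_A$ unfolded from Definition~\ref{d:gammaA}; your side check that a limit of $+\infty$ corresponds exactly to $A=\Id$ is also accurate.
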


The following example illustrates how to algorithmically determine the modulus of a square matrix
using Corollary~\ref{singular theorem}.

\begin{example}\label{example PVPU}
Let $$
M:=\left(\begin{array}{ccc}
1 & 0 & 0 \\
0 & \frac{1}{2} & 0 \\
0 & \frac{1}{2} & 0
\end{array}\right).
$$ Then $M \notin \mathbb{S}^n$ and $1 \in \sigma(M)$. Thus both
Proposition~\ref{kSn} and Theorem~\ref{nonsingular theorem} are not
applicable to compute $k(M)$. We have for every $\varepsilon>0$,
$$
\begin{aligned}
((1+\varepsilon) \Id-M)^{-1} & =\left(\begin{array}{ccc}
\varepsilon & 0 & 0 \\
0 & \frac{1}{2}+\varepsilon & 0 \\
0 & -\frac{1}{2} & 1+\varepsilon
\end{array}\right)^{-1} \\
& =\left(\begin{array}{ccc}
\frac{1}{\varepsilon} & 0 & 0 \\
0 & \frac{2}{2 \varepsilon+1} & 0 \\
0 & \frac{1}{2 \varepsilon^2+3 \varepsilon+1} & \frac{1}{\varepsilon+1}
\end{array}\right),
\end{aligned}
$$ thus
$$
\left(((1+\varepsilon) \operatorname{Id}-M)^{-1}\right)_S=\left(\begin{array}{ccc}
\frac{1}{\varepsilon} & 0 & 0 \\
0 & \frac{2}{2 \varepsilon+1} & \frac{1}{4 \varepsilon^2+6 \varepsilon+2} \\
0 & \frac{1}{4 \varepsilon^2+6 \varepsilon+2} & \frac{1}{\varepsilon+1}
\end{array}\right),
$$and
$$\sigma\left(\left(((1+\varepsilon) \mathrm{Id}-M)^{-1}\right)_S\right)=\left\{
\frac{1}{\varepsilon},  \frac{4 \varepsilon-\sqrt{2}+3}{4 \varepsilon^2+6 \varepsilon+2},  \frac{4 \varepsilon+\sqrt{2}+3}{4 \varepsilon^2+6 \varepsilon+2}\right
\}.$$
Therefore,
$$\phi_M(\varepsilon)=\lambda_{\min}\left(\left(((1+\varepsilon) \mathrm{Id}-M)^{-1}\right)_S\right)=\frac{4 \varepsilon-\sqrt{2}+3}{4 \varepsilon^2+6 \varepsilon+2}.$$
Taking the limit as $\varepsilon\rightarrow 0^+$ yields
$$\lim _{\varepsilon \rightarrow 0^{+}} \lambda_{\min }\left(\left(((1+\varepsilon) \mathrm{Id}-M)^{-1}\right)_S\right)=\lim _{\varepsilon \rightarrow 0^{+}} \frac{4 \varepsilon-\sqrt{2}+3}{4 \varepsilon^2+6 \varepsilon+2}=\frac{3-\sqrt{2}}{2}>0.$$
Thus, by Corollary \ref{singular theorem}, we conclude that $M$ is conically averaged and $$
k(M)=\frac{1}{2 \lim _{\varepsilon \rightarrow 0^{+}} \lambda_{\min }\left(\left(((1+\varepsilon) \mathrm{Id}-M)^{-1}\right)_S\right)}=\frac{1}{3-\sqrt{2}}=\frac{3+\sqrt{2}}{7}.
$$
\end{example}

Given $A \in M_n(\mathbb{R})$, we see from the example that finding the function $\varphi_A$ is crucial for determining $k(A)$. It would be problematic if $\varphi_A$ is badly discontinuous. Fortunately, there is a remarkable fact regarding the continuity of eigenvalues, which ensures that such pathologies do not occur in this context.

\begin{fact}\emph{(\textbf{Kato})}\emph{\cite[Theorem 5.2]{KT}(see also \cite[Theorem 3]{LZ})}\label{Katofact}
Suppose that $D \subset \mathbb{C}$ is a connected domain and that $A: D \rightarrow M_n(\mathbb{C})$ is a continuous function. If (1) $D$ is a real interval, or (2) $A(t)$ has only real eigenvalues, then there exist $n$ eigenvalues (counted with algebraic multiplicities) of $A(t)$ that can be parameterized as continuous functions $\lambda_1(t), \ldots, \lambda_n(t)$ from $D$ to $\mathbb{C}$. In the second case, one can set $\lambda_1(t) \geq \cdots \geq \lambda_n(t)$.
\end{fact}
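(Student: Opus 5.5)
The plan is to reduce everything to the continuity of the \emph{unordered} spectrum and then handle the two cases separately. First, the coefficients of the characteristic polynomial $p_t(z)=\det(z\Id-A(t))$ are polynomials in the entries of $A(t)$, so they depend continuously on $t$. By Rouch\'e's theorem (or Hurwitz's theorem) applied on small circles around each distinct root of $p_{t_0}$, the multiset of roots depends continuously on $t$. More precisely, consider the matching distance
$$d(\{\mu_i\},\{\nu_i\}):=\min_{\pi}\max_i|\mu_i-\nu_{\pi(i)}|,$$
where the minimum runs over all permutations $\pi$. The key step is this: for each $t_0$ and each sufficiently small $r>0$, there is a neighbourhood of $t_0$ on which every disc $B(\mu,r)$ centred at a distinct eigenvalue $\mu$ of $A(t_0)$ contains exactly as many eigenvalues of $A(t)$ as the algebraic multiplicity of $\mu$. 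This gives continuity of $t\mapsto\sigma(A(t))$ as a multiset in $d$.

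In case (2), where all eigenvalues are real, I will define $\lambda_1(t)\ge\cdots\ge\lambda_n(t)$ by sorting. The claim is that sorting is $1$-Lipschitz from $(\text{multisets},d)$ into $(\RR^n,\|\cdot\|_\infty)$. To prove it, fix a matching $\pi$ with $\max_i|\mu_i-\nu_{\pi(i)}|\le\delta$ and note that the $k$-th largest $\mu$ is at most $\delta$ plus the $k$-th largest $\nu$, by a counting argument. Applying this symmetrically shows that the sorted vectors differ by at most $\delta$ in each coordinate. Continuity of each $\lambda_k$ on $D$ then follows immediately; connectedness of $D$ plays no further role here.

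In case (1), where $D=I$ is a real interval, eigenvalues may be complex and there is no canonical ordering, so I must \emph{lift} a continuous path in the space of unordered $n$-tuples to a path in $\mathbb{C}^n$. I would argue by a continuation (supremum) argument. Fix $a\in I$ and let $s^*$ be the supremum of those $s$ for which a continuous selection exists on $[a,s]$. At $s^*$ I use the local clustering statement above: on a small interval $J$ around $s^*$, the eigenvalues split into groups lying in disjoint discs around the distinct eigenvalues of $A(s^*)$.

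The main obstacle is the extension across $s^*$, where several eigenvalues may coalesce. Within a disc carrying multiplicity $m>1$, I cannot directly select continuous branches. I would handle this by induction on $n$. On $J\setminus\{s^*\}$ near $s^*$ I would try to show that the cluster can itself be followed continuously, for instance by applying the inductive hypothesis to the restricted polynomial factor of degree $m<n$ when the cluster splits; this factor has continuous coefficients, by the residue formula for power sums, since its roots in the disc are those of $p_t$ in the disc. In the genuinely non-splitting case I would instead use the fact that every branch in the cluster converges to the single value $\mu$ as $t\to s^*$. Then any selection on the left glues continuously at $s^*$ to any selection on the right, after relabelling by a permutation that is constant on each side. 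This shows $s^*$ is attained and can be passed, so $s^*=\sup I$. The same argument runs leftwards from $a$.

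I expect the delicate point to be organizing this induction so that finitely many relabellings suffice locally. Compactness of closed subintervals of $I$ then patches the local lifts into a global one.
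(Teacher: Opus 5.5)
The paper gives no proof of this fact (it is quoted from Kato), so your argument has to stand on its own. Your Rouch\'e step and your treatment of case (2) are correct: sorting is $1$-Lipschitz for the matching distance, so the ordered eigenvalues are continuous.

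In case (1), the situation where $A(s^*)$ has at least two distinct eigenvalues also works. Each cluster factor then has degree $m<n$ and continuous coefficients, so the inductive hypothesis gives a selection on all of $J$; you should state that hypothesis for monic polynomials with continuous coefficients, or pass through companion matrices. One constant permutation then glues this selection to the one on the left.

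The genuine gap is the case where $A(s^*)$ has a single eigenvalue $\mu$ of multiplicity $n$. Then the cluster is the whole spectrum and induction is unavailable. Your claim that ``any selection on the left glues continuously at $s^*$ to any selection on the right'' presupposes a continuous selection on some $(s^*,s^*+\eta)$, and nothing in your argument produces one. The set $Z$ of parameters at which $A(t)$ has only one eigenvalue can be an arbitrary closed set. For example, take $A(t)=\begin{pmatrix}0&1\\ h(t)&0\end{pmatrix}$, whose eigenvalues are $\pm\sqrt{h(t)}$, with $h$ continuous and $h^{-1}(0)$ a Cantor set. Then $Z$ can accumulate at $s^*$ from the right, and each of those points presents the same obstruction. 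A continuation that crosses one bad point at a time, in either direction, stalls at such accumulation points. The hope that ``finitely many relabellings suffice locally'' is misplaced: arbitrarily close to $s^*$ you must make infinitely many choices.

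The missing observation is that these choices need no coordination. Take $Z$ as above; it is closed in $I$, being the set where $p_t(z)=(z-\operatorname{tr}A(t)/n)^n$. On $Z$ set $\lambda_1(t)=\cdots=\lambda_n(t):=\operatorname{tr}A(t)/n$.

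Each $t_0\in I\setminus Z$ has at least two distinct eigenvalues, so your cluster splitting plus induction gives a continuous selection near $t_0$. On each connected component of $I\setminus Z$, glue these local selections by constant permutations at overlap points, exhausting the component by compact subintervals. Make the choices on different components independently.

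Continuity at every $t_0\in Z$ is then automatic. For $t$ near $t_0$, all eigenvalues of $A(t)$ lie in a small disc around $\operatorname{tr}A(t_0)/n$, whatever labelling is used. This is your ``non-splitting'' remark applied to all of $Z$ at once rather than at the single point $s^*$, and it makes the supremum argument unnecessary.
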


\begin{proposition}
Let $A \in M_n(\mathbb{R})$. Then $\varphi_A$ is continuous on $(0,\gamma(A))$, where $\gamma(A)$ is given in Definition~\ref{d:gammaA}\ref{i:gamma1}.
\end{proposition}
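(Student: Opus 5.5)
We prove continuity by composing continuous maps, with Kato's theorem (Fact~\ref{Katofact}) handling the only nontrivial link, namely the continuity of $\lambda_{\min}$ on symmetric matrices. The plan has three steps.

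\emph{Step 1: the map $\varepsilon\mapsto ((1+\varepsilon)\Id-A)^{-1}$ is continuous.} By the remark after Definition~\ref{d:gammaA}, $(1+\varepsilon)\Id-A$ is invertible for every $\varepsilon\in(0,\gamma(A))$. The map $\varepsilon\mapsto(1+\varepsilon)\Id-A$ is affine, hence continuous. On the open set of invertible matrices, matrix inversion is continuous, for instance via Cramer's rule: $B^{-1}=\operatorname{adj}(B)/\det B$, whose entries are polynomials in the entries of $B$ divided by the nonvanishing polynomial $\det B$. Composing these two maps gives continuity of $\varepsilon\mapsto((1+\varepsilon)\Id-A)^{-1}$.

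\emph{Step 2: the symmetric part $S(\varepsilon):=\big(((1+\varepsilon)\Id-A)^{-1}\big)_S$ is continuous.} The map $B\mapsto B_S=(B+B^{\top})/2$ is linear, so $S$ is continuous on $(0,\gamma(A))$. Moreover, each $S(\varepsilon)$ is real symmetric, so it has only real eigenvalues.

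\emph{Step 3: $\lambda_{\min}$ of $S(\varepsilon)$ is continuous.} Apply Fact~\ref{Katofact} to $S$ on the connected domain $D=(0,\gamma(A))$, which is a real interval; condition (2) also holds. This yields continuous eigenvalue functions $\lambda_1(\varepsilon)\ge\cdots\ge\lambda_n(\varepsilon)$ that list the eigenvalues of $S(\varepsilon)$ with algebraic multiplicity. Because the ordering is imposed, $\lambda_n(\varepsilon)=\lambda_{\min}(S(\varepsilon))=\varphi_A(\varepsilon)$, so $\varphi_A$ is continuous.

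\emph{Main obstacle.} The only delicate point is justifying that the \emph{ordered} smallest eigenvalue is the continuous branch. Kato's theorem gives this directly in case (2) via the ordering $\lambda_1\ge\cdots\ge\lambda_n$. As an independent check, one can also use Weyl's inequality $|\lambda_{\min}(S)-\lambda_{\min}(T)|\le\|S-T\|_2$ for symmetric $S,T$. This follows from the Rayleigh quotient formula $\lambda_{\min}(S)=\min_{\|z\|=1}\langle z,Sz\rangle$, together with the bound $\langle z,Sz\rangle\le\langle z,Tz\rangle+\|S-T\|_2$ for unit $z$. Weyl's inequality shows that $\lambda_{\min}$ is $1$-Lipschitz on $\mathbb{S}^n$, which again gives the continuity of $\varphi_A$.
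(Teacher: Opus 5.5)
Your proof is correct and follows the paper's argument: continuity of $\varepsilon\mapsto\big(((1+\varepsilon)\Id-A)^{-1}\big)_S$ comes from continuity of matrix inversion and of $(\cdot)_S$, and then Kato's theorem in the real-eigenvalue case, with the ordering $\lambda_1\ge\cdots\ge\lambda_n$, identifies $\lambda_n$ with $\varphi_A$. Your Weyl-inequality check, showing that $\lambda_{\min}$ is $1$-Lipschitz on $\mathbb{S}^n$, is a valid and more elementary substitute for Kato that the paper does not use.
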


\begin{proof}
The mapping $\left(((1+\cdot) \mathrm{Id}-A)^{-1}\right)_S$ from the interval $(0, \gamma(A))$ to $M_n(\mathbb{R})$ is continuous since $(\cdot)^{-1}$ and $(\cdot)_S$ are continuous operations over $\left(M_n(\mathbb{R}),\|\cdot\|_2\right)$. On the other hand, the domain $(0, \gamma(A))$ is connected and $\left(((1+\varepsilon) \mathrm{Id}-A)^{-1}\right)_S$ has only real eigenvalues for every $\varepsilon \in (0, \gamma(A))$.
Therefore, $\varphi_A(\varepsilon)=\lambda_{\min }\left(\left(((1+\varepsilon) \operatorname{Id}-A)^{-1}\right)_S\right)$ is continuous on $(0,\gamma(A))$ by Fact \ref{Katofact}.
\end{proof}

\subsection{Continuity of the modulus of conical averagedness function}

Let $\kappa: M_n(\mathbb{R}) \rightarrow[0,+\infty]$ be the restriction of the modulus on $M_n(\mathbb{R})$, i.e., $\kappa(A):=k(A)$ for every $A \in M_n(\mathbb{R})$. Then $\kappa$ is an extended nonnegative-valued function on $M_n(\mathbb{R})$ and $\operatorname{dom} \kappa:=\{A \in M_n(\mathbb{R}) \mid \kappa(A) < +\infty\}$ is the set of all conically averaged square matrices.

\begin{theorem}\label{modlinearlowersemi}
The function $\kappa$ is lower semicontinuous on $M_n(\mathbb{R})$.
\end{theorem}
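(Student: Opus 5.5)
To prove that $\kappa$ is lower semicontinuous, I will use the sequential definition. Fix $A\in M_n(\mathbb{R})$ and a sequence $A_m\to A$ in $M_n(\mathbb{R})$. The goal is
$$\kappa(A)\le \varliminf_{m\to\infty}\kappa(A_m).$$
If $\varliminf_m \kappa(A_m)=+\infty$ there is nothing to prove. Otherwise set $L:=\varliminf_m\kappa(A_m)\in[0,+\infty)$ and pass to a subsequence (not relabeled) with $\kappa(A_m)\to L$. In particular, every $A_m$ is conically averaged with $\kappa(A_m)<+\infty$.

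The key tool is the quadratic characterization in Proposition~\ref{usefullinear}\ref{i:linear2}\&\ref{i:linear3}. Each conically averaged $A_m$ is conically $\kappa(A_m)$-averaged, so
$$(\forall z\in\mathbb{R}^n)\quad \|z-A_mz\|^2\le 2\kappa(A_m)\big(\|z\|^2-\langle z,A_mz\rangle\big).$$
Now fix $z$ and let $m\to\infty$. The left-hand side converges to $\|z-Az\|^2$, because $A_mz\to Az$. On the right-hand side, $\kappa(A_m)\to L$ and $\|z\|^2-\langle z,A_mz\rangle\to \|z\|^2-\langle z,Az\rangle$, so the product converges to the product of the limits. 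Both limits are finite, so no $\varliminf$ manipulation of the kind in Fact~\ref{f:l:supinf} is required. This gives
$$(\forall z\in\mathbb{R}^n)\quad \|z-Az\|^2\le 2L\big(\|z\|^2-\langle z,Az\rangle\big).$$

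From this inequality, Corollary~\ref{c:char:con}, applied with $T=A$ and $K=L$, gives $\kappa(A)\le L$. That corollary covers the boundary case $L=0$ as well, where the inequality forces $A=\Id$; I rely on it rather than on Proposition~\ref{usefullinear}\ref{i:linear3}, because the latter presupposes that $A$ is already conically averaged. It also establishes that $A$ is conically averaged. Hence $\kappa(A)\le\varliminf_m\kappa(A_m)$ for every convergent sequence, which is lower semicontinuity.

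The main obstacle is the bookkeeping around the extended values. One point is that the subsequence must realize the $\varliminf$, so that the coefficients $\kappa(A_m)$ actually converge rather than merely being bounded. The other is the degenerate case $L=0$. Both are handled by working along a subsequence and invoking Corollary~\ref{c:char:con}.
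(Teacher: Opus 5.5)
Your proof is correct and follows the paper's argument: you pass to a subsequence realizing the $\varliminf$, take limits in the quadratic inequality of Proposition~\ref{usefullinear}\ref{i:linear3} for the $A_m$, and conclude for $A$. Closing with Corollary~\ref{c:char:con} instead of Proposition~\ref{usefullinear}\ref{i:linear3} is slightly cleaner, since it does not presuppose that $A$ is already conically averaged; one small correction is that only all but finitely many $A_m$ along the subsequence are guaranteed to have $\kappa(A_m)<+\infty$, not every $A_m$.
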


\begin{proof}
Let $(A_m)_{m \in \mathbb{N}}$ be a sequence in $M_n(\mathbb{R})$ such that $A_m\rightarrow A$.
We aim to show that $\kappa(A) \leq \varliminf_{m \rightarrow \infty} \kappa\left(A_m\right)$. If $\varliminf_{m\rightarrow \infty} \kappa(A_m)=+\infty$, then $\kappa(A) \leq \varliminf_{m\rightarrow \infty} \kappa(A_m)$. If $\varliminf_{m \rightarrow \infty} \kappa(A_m) \in [0, +\infty)$, then there exists a subsequence $(m_j)_{j \in \mathbb{N}}$ such that $\kappa(A_{m_j}) \rightarrow \varliminf_{m \rightarrow \infty} \kappa\left(A_m\right)$ when $j \rightarrow \infty$. Thus there exists $N>0$ such that for any $j>N$: $\kappa(A_{m_j})\in [0, +\infty)$. Applying Proposition \ref{usefullinear}\ref{i:linear3} to the conically averaged operator $A_{m_j}$, we have
$$
(\forall z \in \mathbb{R}^n)\ \|A_{m_j} z\|^2+(1-2 \kappa(A_{m_j}))\|z\|^2 \leq 2(1-\kappa(A_{m_j}))\langle z,A_{m_j} z\rangle.
$$
Note that $A_{m_j} \rightarrow A$ implies $A_{m_j}z \rightarrow Az$. Letting $j \rightarrow \infty$ yields
\begin{equation}\label{e:sunday1}
(\forall z \in \mathbb{R}^n)\ \|A z\|^2+(1-2 \varliminf_{m \rightarrow \infty} \kappa(A_m))\|z\|^2 \leq 2(1-\varliminf_{m \rightarrow \infty} \kappa(A_m))\langle z, A z\rangle
\end{equation}
due to the continuity of norm and inner product. Because of $\varliminf_{m \rightarrow \infty} \kappa\left(A_m\right) \in[0,+\infty)$ and \eqref{e:sunday1}, we derive that $A$ is conically $\varliminf_{m \rightarrow \infty} \kappa\left(A_m\right)$-averaged by applying Proposition \ref{usefullinear}\ref{i:linear3} again. Therefore,
$
\kappa(A) \leq \varliminf_{m \rightarrow \infty} \kappa\left(A_m\right).
$
\end{proof}

\begin{corollary}\label{c:klsc}
The function $\kappa$ is proper, lower semicontinuous and convex on $M_n(\mathbb{R})$, but not strictly convex.
Moreover, $\argmin \kappa=\{\Id\}$ and $\min\kappa=0$.
\end{corollary}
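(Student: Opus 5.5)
The corollary bundles four claims: $\kappa$ is proper, lower semicontinuous and convex; $\kappa$ is not strictly convex; $\argmin\kappa=\{\Id\}$; and $\min\kappa=0$. I will take them in that order.

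\emph{Properness.} $\kappa$ takes values in $[0,+\infty]$, so it never equals $-\infty$. It is not identically $+\infty$, since $\kappa(\Id)=0$ by Example~\ref{e:Id}, or by Proposition~\ref{usefullinear}\ref{i:linear1}.

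\emph{Lower semicontinuity.} This is exactly Theorem~\ref{modlinearlowersemi}.

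\emph{Convexity.} I check the inequality $\kappa((1-\lambda)A+\lambda B)\le(1-\lambda)\kappa(A)+\lambda\kappa(B)$ for $A,B\in M_n(\RR)$ and $\lambda\in[0,1]$.
\begin{itemize}
\item If $\lambda\in\{0,1\}$, the inequality is trivial.
\item If $\lambda\in(0,1)$ and $\kappa(A)=+\infty$ or $\kappa(B)=+\infty$, the right-hand side is $+\infty$ and there is nothing to prove.
\item Otherwise $A$ and $B$ are both conically averaged, and Proposition~\ref{newOYprop} gives the inequality directly. Linear maps are in particular maps $X\to X$, so that proposition applies.
\end{itemize}
Equivalently, $\operatorname{dom}\kappa$ is convex and $\kappa$ is convex on it.

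\emph{Minimizers.} Since $\kappa\ge 0$ and $\kappa(\Id)=0$, we get $\min\kappa=0$. Conversely, $\kappa(A)=0$ means $A$ is conically averaged with $k(A)=0$. Proposition~\ref{usefullinear}\ref{i:linear1} then forces $A=\Id$, so $\argmin\kappa=\{\Id\}$.

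\emph{Failure of strict convexity.} I need two distinct matrices in $\operatorname{dom}\kappa$ along whose segment $\kappa$ is affine. By Example~\ref{e:Id}, $\kappa(\alpha\Id)=(1-\alpha)/2$ for all $\alpha\le1$. Take $A=0$ and $B=-\Id$. Then
\[
\kappa\big((1-\lambda)\cdot 0+\lambda(-\Id)\big)=\kappa(-\lambda\Id)=\frac{1+\lambda}{2}=(1-\lambda)\kappa(0)+\lambda\kappa(-\Id),
\]
using $\kappa(0)=1/2$ and $\kappa(-\Id)=1$. Equality holds for every $\lambda\in(0,1)$, so $\kappa$ is not strictly convex.

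I expect no genuine obstacle. The only care needed is in the convexity step, where the cases with infinite values must be handled separately because Proposition~\ref{newOYprop} assumes both maps are conically averaged.
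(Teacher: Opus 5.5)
Your proposal is correct and follows the paper's proof essentially step for step. Properness, $\min\kappa=0$ and $\argmin\kappa=\{\Id\}$ come from $\kappa\ge 0$ and Proposition~\ref{usefullinear}\ref{i:linear1}; lower semicontinuity comes from Theorem~\ref{modlinearlowersemi}; convexity comes from Proposition~\ref{newOYprop}, and your explicit handling of the $+\infty$ cases is a welcome addition; failure of strict convexity comes from the affine behaviour of $\kappa$ along $\alpha\Id$ in Example~\ref{e:Id}. The only cosmetic difference is that you use the segment from $0$ to $-\Id$, whereas the paper uses $\kappa((1-\lambda)\Id)=\lambda/2$; both rest on the same formula.
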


\begin{proof}
Since $\kappa\geq 0$ and $\kappa(\Id)=0$ by Proposition~\ref{usefullinear}\ref{i:linear1}, we have that
$\kappa$ is proper, $\min\kappa=0$ with $\argmin\kappa=\{\Id\}$.
While Theorem \ref{modlinearlowersemi} gives the lower semicontinuity of $\kappa$, Proposition~\ref{newOYprop} yields
the convexity. To see that $\kappa$ is not strictly convex, we use
$\kappa((1-\lambda)\Id)=\lambda/2$ for $\lambda\geq 0$ by Example~\ref{e:Id}.
Altogether the proof is complete.
\end{proof}

The following result concerns the continuity of convex functions. Below $\inte\dom f$ denotes the interior of $\dom f$, and $\mbox{cont} f$ is the set of points at which $f$ is continuous.
\begin{fact}\emph{\cite[Corollary 8.39]{BC}}\label{klscfact}
Let $f: X \rightarrow (-\infty,+\infty]$ be proper and convex, and suppose that one of the following holds:
\begin{enumerate}
\item $f$ is bounded above on some neighborhood.
\item $f$ is lower semicontinuous.
\item $X$ is finite-dimensional.
\end{enumerate}
Then $\operatorname{cont} f=\operatorname{int} \operatorname{dom} f$.
\end{fact}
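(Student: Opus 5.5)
This statement is a cited fact (\cite[Corollary 8.39]{BC}), so the plan below reconstructs the standard argument rather than anything specific to this paper. We assume $X$ is a real Hilbert space. The inclusion $\operatorname{cont} f\subset\operatorname{int}\operatorname{dom} f$ is immediate. If $f$ is continuous at $x$, then $f$ is finite near $x$, since $f(x)<+\infty$ and continuity into $(-\infty,+\infty]$ at a finite value keeps nearby values finite. Hence a ball around $x$ lies in $\operatorname{dom} f$. The substance is the reverse inclusion, and every case will be reduced to case (i).

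\textbf{Case (i).} Suppose $f\le M$ on a ball $B(x_0;\delta)$. We first show $f$ is continuous at $x_0$. For $\|h\|\le t\delta$ with $t\in(0,1)$, write $x_0+h=(1-t)x_0+t(x_0+h/t)$. Convexity then gives
\[
f(x_0+h)-f(x_0)\le t\,(M-f(x_0)).
\]
Next write $x_0=\tfrac{1}{1+t}(x_0+h)+\tfrac{t}{1+t}(x_0-h/t)$. This gives the lower estimate $f(x_0)-f(x_0+h)\le t\,(M-f(x_0))$. Together, $|f(x_0+h)-f(x_0)|\le t(M-f(x_0))$, which is continuity at $x_0$.

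We then propagate boundedness to any $y\in\operatorname{int}\operatorname{dom} f$. Choose $\rho>1$ with $z:=x_0+\rho(y-x_0)\in\operatorname{dom} f$, which is possible since $y$ is interior. Then $y=(1-1/\rho)x_0+(1/\rho)z$. The set $(1-1/\rho)B(x_0;\delta)+(1/\rho)z$ is a ball around $y$, and on it convexity gives the bound $f\le (1-1/\rho)M+(1/\rho)f(z)$. By the first step, $f$ is continuous at $y$.

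\textbf{Case (iii).} Let $x\in\operatorname{int}\operatorname{dom} f$ with $\dim X=n$. Pick a simplex with vertices $v_0,\dots,v_n\in\operatorname{dom} f$ whose interior contains $x$. Every point of the simplex is a convex combination of the vertices, so Jensen's inequality bounds $f$ above there by $\max_i f(v_i)$. Case (i) then applies.

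\textbf{Case (ii).} This is the main obstacle, because it needs a Baire category argument. Fix $x\in\operatorname{int}\operatorname{dom} f$ and translate so that $x=0$, with $B(0;r)\subset\operatorname{dom} f$. Define the level sets $C_m:=\{y\in X\mid f(y)\le m\}\cap \overline{B}(0;r/2)$. Each $C_m$ is closed because $f$ is lower semicontinuous, and each is convex. Their union is $\overline{B}(0;r/2)$, since $f$ is finite on the domain.

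The closed ball is a complete metric space, so Baire's theorem gives some $C_m$ with nonempty interior relative to the ball, and hence containing a ball $B(y_0;\eta)$ in $X$. Thus $f$ is bounded above near a point, and case (i) yields continuity on all of $\operatorname{int}\operatorname{dom} f$, in particular at $x$. Completeness of the Hilbert space is essential here.

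The delicate part is the Baire step: we must make sure the level sets cover a complete set, and that a relative interior point of $C_m$ gives a genuine open ball in $X$. Since $C_m\subset\overline{B}(0;r/2)$, an interior point in the relative topology that lies in the open ball $B(0;r/2)$ is an interior point in $X$. The relative interior given by Baire always contains such points, because $B(0;r/2)$ is dense in $\overline{B}(0;r/2)$.
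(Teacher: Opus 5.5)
The paper gives no proof of this fact and only cites \cite[Corollary 8.39]{BC}. Your argument is correct and is essentially the standard one behind that citation: (ii) is reduced to (i) by Baire's theorem on a closed ball inside $\operatorname{dom} f$, (iii) is reduced to (i) by a simplex with vertices in $\operatorname{dom} f$, and in (i) upper boundedness near one point gives continuity there and is carried to all of $\operatorname{int}\operatorname{dom} f$ by convex combinations. Only two cosmetic points remain: take $\|h\|<t\delta$ if $B(x_0;\delta)$ is open, and note that $\operatorname{cont} f\subset\operatorname{int}\operatorname{dom} f$ relies on the convention in \cite{BC} that $\operatorname{cont} f$ contains only points where $f$ is finite, which the paper's informal description of $\operatorname{cont} f$ leaves implicit.
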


Let $\mathcal{C}$ denote the set of all conically averaged matrices in $M_n(\mathbb{R})$. Then clearly we have $\operatorname{dom} \kappa=\mathcal{C}$.

\begin{corollary}\label{c:set:conical}
The function $\kappa$ is continuous on $\operatorname{int} \mathcal{C}$ and $\operatorname{cont} \kappa=\operatorname{int} \mathcal{C}$. In particular,
$$\BB(0,1):=\menge{A\in M_n(\mathbb{R})}{\|A\|_{2}<1}\subset \operatorname{int} \mathcal{C}.$$
\end{corollary}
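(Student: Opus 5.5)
The first claim follows from Corollary~\ref{c:klsc} and Fact~\ref{klscfact}. By Corollary~\ref{c:klsc}, $\kappa$ is proper, lower semicontinuous and convex on the finite-dimensional space $M_n(\mathbb{R})$, so Fact~\ref{klscfact} gives $\operatorname{cont}\kappa=\operatorname{int}\operatorname{dom}\kappa$. Since $\operatorname{dom}\kappa=\mathcal{C}$, we get $\operatorname{cont}\kappa=\operatorname{int}\mathcal{C}$, and in particular $\kappa$ is continuous on $\operatorname{int}\mathcal{C}$. There is one small point to record: Fact~\ref{klscfact} is stated for functions into $(-\infty,+\infty]$, and $\kappa$ takes values in $[0,+\infty]$, so it qualifies.

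For the inclusion $\BB(0,1)\subset\operatorname{int}\mathcal{C}$, we note that $\BB(0,1)$ is open in $(M_n(\mathbb{R}),\|\cdot\|_2)$, since the norm is continuous. It therefore suffices to show $\BB(0,1)\subset\mathcal{C}$, because an open subset of $\mathcal{C}$ lies in its interior. Take $A$ with $\|A\|_2<1$. Then $A$ is nonexpansive, so $A=(1-1)\Id+1\cdot A$ is conically $1$-averaged and $\kappa(A)\le 1<+\infty$. This is consistent with Fact~\ref{f:one:lip}. Hence $A\in\mathcal{C}$.

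This direct argument via $k=1$ avoids any computation. As an alternative, one could use Theorem~\ref{nonsingular theorem}: $\|A\|_2<1$ gives $1\notin\sigma(A)$, and one would then need to check that $((\Id-A)^{-1})_S$ is positive definite. That route is unnecessary here.

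None of the steps is a real obstacle. The only care needed is in applying Fact~\ref{klscfact}, checking that its hypotheses (properness, convexity, lower semicontinuity or finite dimensionality) are exactly those supplied by Corollary~\ref{c:klsc}. In fact the Euclidean setting alone suffices, through the third alternative of that fact. One could also remark that the inclusion is strict, since $\Id\in\mathcal{C}$ has norm $1$, but whether $\Id$ is an interior point is not needed for the statement.
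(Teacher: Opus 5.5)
Your proof is correct and follows the paper's. The first claim is exactly Corollary~\ref{c:klsc} combined with Fact~\ref{klscfact}. For the inclusion, you use the elementary fact that a nonexpansive $A$ is conically $1$-averaged, where the paper cites the stronger Noll--Phan bound $\kappa(A)<1$, and you make explicit the openness of $\BB(0,1)$, which the paper leaves implicit.

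Only your closing aside is off. By Theorem~\ref{t:interior:done}, $\Id\notin\operatorname{int}\mathcal{C}$, so $\Id$ cannot witness strictness of $\BB(0,1)\subset\operatorname{int}\mathcal{C}$. A correct witness is $-2\Id$, which satisfies $\lambda_{\max}((-2\Id)_S)=-2<1$ but has norm $2$.
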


\begin{proof}
Combine Corollary \ref{c:klsc} and Fact \ref{klscfact}.
When $A\in \BB(0,1)$, the mapping $A:\RR^n\rightarrow\RR^n$ is a Banach contraction. By \cite[Example 3.2]{nollphan} every Banach contraction $A$ is averaged with $\kappa(A)<1$, in particular, conically averaged. Then $\BB(0,1)\subset\inte \mathcal{C}$.
\end{proof}

It is natural to ask whether one can characterize $\inte \mathcal{C}$ in Corollary~\ref{c:set:conical}. To this end, we first refine the characterization of $\mathcal{C}$. In what follows,
$\succeq$ denotes the Loewner order, $\ker(\cdot)$ denotes the kernel of a matrix, and $(\cdot)^\perp$ denotes the orthogonal complement of a set.

\begin{theorem}\emph{\textbf{(conically averaged matrix: characterization II)}}\label{t:CharacterC}
Let $A\in M_n(\mathbb R)$. Then $A$ is conically averaged if and only if $\mathrm{Id}-A_S \succeq 0$ and $\operatorname{ker}\left(\mathrm{Id}-A_S\right) \subseteq \operatorname{ker}(\mathrm{Id}-A)$.
\end{theorem}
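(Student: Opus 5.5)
We work throughout with the quadratic-form characterization \eqref{e:linear:char2} of Proposition~\ref{usefullinear}\ref{i:linear3}. By that result, $A$ is conically averaged if and only if there is $k\ge 0$ with
$$
(\forall z\in\RR^n)\quad \|(\Id-A)z\|^2\le 2k\,\langle z,(\Id-A)z\rangle .
$$
Set $B:=\Id-A$. Then $\langle z,Bz\rangle=\langle z,B_S z\rangle$ and $B_S=\Id-A_S$. The condition therefore becomes the Loewner inequality
$$
B^\top B\preceq 2k\,B_S ,
$$
and the theorem reduces to showing that such a $k$ exists if and only if $B_S\succeq 0$ and $\ker B_S\subseteq\ker B$.

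\emph{Necessity.} Assume $B^\top B\preceq 2kB_S$ for some $k\ge0$. Since $B^\top B\succeq 0$, we get $2k\langle z,B_Sz\rangle\ge 0$ for all $z$.
\begin{enumerate}
\item If $k>0$, this gives $B_S\succeq 0$.
\item If $k=0$, then $\|Bz\|^2\le 0$ for all $z$, so $B=0$ (that is, $A=\Id$), and both conditions hold trivially.
\end{enumerate}
For the kernel inclusion, take $z\in\ker B_S$. Then $\|Bz\|^2\le 2k\langle z,B_Sz\rangle=0$, so $z\in\ker B$.

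\emph{Sufficiency.} This is the main step. Assume $B_S\succeq0$ and $\ker B_S\subseteq\ker B$. Let $V:=\ker B_S$ and $W:=V^\perp=\operatorname{ran}B_S$; the latter equality holds because $B_S$ is symmetric. The inclusion gives $BV=0$. On $W$ the form $\langle z,B_Sz\rangle$ is coercive, with constant $\lambda_+>0$ equal to the smallest positive eigenvalue of $B_S$.

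For arbitrary $z=v+w$ with $v\in V$ and $w\in W$:
\begin{enumerate}
\item $Bz=Bw$, because $Bv=0$;
\item $\langle z,B_Sz\rangle=\langle w,B_Sw\rangle$, because $B_Sv=0$ and $B_S$ is symmetric, so the cross terms $\langle v,B_Sw\rangle=\langle B_Sv,w\rangle$ vanish.
\end{enumerate}
Hence
$$
\|Bz\|^2=\|Bw\|^2\le\|B\|_2^2\|w\|^2\le \frac{\|B\|_2^2}{\lambda_+}\langle z,B_Sz\rangle .
$$
Taking $k:=\|B\|_2^2/(2\lambda_+)$ gives the required inequality. If $B_S=0$, then the inclusion forces $B=0$, so $A=\Id$ and the claim is immediate. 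We conclude by Proposition~\ref{usefullinear}\ref{i:linear3} (or Corollary~\ref{c:char:con}).

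The one point needing care in the sufficiency step is the vanishing of the cross terms in the quadratic form. This uses only the symmetry of $B_S$; the inclusion $\ker B_S\subseteq\ker B$ is used separately, to get $Bz=Bw$. With both in hand, the estimate reduces to the coercive case on $W$.
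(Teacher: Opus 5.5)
Your proof is correct and essentially identical to the paper's. It uses the same substitution $B=\Id-A$ to reduce to $\|Bz\|^2\le 2k\langle z,B_Sz\rangle$, the same orthogonal splitting of $z$ along $\ker B_S$, and the same constant $k=\|B\|_2^2/(2\lambda_+)$, with the degenerate cases $k=0$ and $B_S=0$ treated separately just as the paper does.
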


\begin{proof}
Let $A\in M_n(\mathbb R)$, and set $B:=\Id-A$. By Proposition~\ref{usefullinear}\ref{i:linear3}, $A$ is conically
$k$-averaged with $k\ge 0$ if and only if
\begin{equation}\label{eq:2kB}
\left(\forall z\in\mathbb R^n\right)\ \|Bz\|^2\leq 2k\langle z,Bz\rangle.
\end{equation}
We claim that \eqref{eq:2kB} holds if and only if
\begin{equation}\label{eq:kerB}
B_S\succeq 0
\quad\text{and}\quad
\ker B_S\subseteq \ker B.
\end{equation}
Indeed, suppose first that \eqref{eq:2kB} holds. If $k=0$, then $B=0$, hence $B_S=0$, and \eqref{eq:kerB} follows.
If $k>0$, then by $\langle z,Bz\rangle=\langle z,B_Sz\rangle$, \eqref{eq:2kB} turns into $$
0\le \|B z\|^2 \leq 2 k\langle z, B z\rangle=2k\langle z,B_Sz\rangle  \quad\left(\forall z \in \mathbb{R}^n\right),$$
hence
$B_S\succeq 0$. Moreover, if $z\in\ker B_S$, then the same inequality gives
$$
\|Bz\|^2\leq 2k\langle z,B_Sz\rangle=0,
$$
hence $Bz=0$. Thus $\ker B_S\subseteq\ker B$.

Conversely, assume \eqref{eq:kerB}. If $B_S=0$, then $\ker B_S=\mathbb R^n$, so $\operatorname{ker} B=\mathbb R^n$, which implies $B=0$. Thus \eqref{eq:2kB} holds
with $k=0$. If $B_S\neq 0$, let $z\in \mathbb R^n$, and decompose
\begin{equation}\label{eq:decom}
z=u+v,
\qquad
u\in\ker B_S,\quad v\in(\ker B_S)^\perp .
\end{equation}
Since $\ker B_S\subseteq\ker B$, we have $Bz=Bv$. Also, because $B_S$ is symmetric and
$B_Su=0$,
$$
\langle z,B_Sz\rangle=\langle v,B_Sv\rangle .
$$
Moreover, since $B_S \succeq 0$, the restriction of $B_S$ to $(\ker B_S)^\perp$ is positive definite, hence
$$
(\forall v\in(\ker B_S)^\perp)\ \langle v,B_Sv\rangle\geq \lambda_+\|v\|^2,
$$
where $$
\lambda_{+}:=\lambda_{\min }\left(\left.B_S\right|_{(\operatorname{ker} B_S)^{\perp}}\right)>0.
$$
Therefore,
$$
\|Bz\|^2
=
\|Bv\|^2
\leq
\|B\|_2^2\|v\|^2
\leq
\frac{\|B\|_2^2}{\lambda_+}\langle v,B_Sv\rangle
=
\frac{\|B\|_2^2}{\lambda_+}\langle z,B_Sz\rangle .
$$
Thus \eqref{eq:2kB} holds with
$$
k=\frac{\|B\|_2^2}{2\lambda_+}.
$$
This completes the proof.
\end{proof}

\begin{remark}
Theorem~\ref{t:CharacterC} immediately recovers the conical averagedness criterion for symmetric matrices, for which $A_S=A$. It also implies that
skew-symmetric matrices, for which $A_S=0$, are always conically averaged; see Propositions~\ref{kSn} and \ref{p:skew}.
\end{remark}

\begin{remark}\label{r:equ} 
In Theorem~\ref{t:CharacterC}, the kernel condition
$$
\ker(\Id-A_S)\subseteq \ker(\Id-A)
$$
can  be equivalently replaced by
$\ker(\Id-A_S)=\ker(\Id-A)$, i.e.,
$\Fix A_S=\Fix A.$
Indeed, since $\Id-A_S\succeq 0$, we always have
$$
\ker(\Id-A)\subseteq \ker(\Id-A_S).
$$
To see this, let $x\in \ker(\Id-A)$. Then
$$
\langle (\Id-A_S)x,x\rangle
=
\langle (\Id-A)x,x\rangle
=
0.
$$
Since $\Id-A_S\succeq 0$, this implies $(\Id-A_S)x=0$, and hence
$x\in \ker(\Id-A_S)$.
\end{remark}

For verifying whether a matrix is conically averaged, the following corollary provides a nicely computable criterion. It avoids the
eigenvalue limit condition in Theorem~\ref{mainresult1}, though this
criterion does not yield the value of $k(A)$, whereas Theorem~\ref{mainresult1} does. Below, $\rank(\cdot)$ denotes the rank of a matrix.

\begin{corollary}\label{c:computechar}
Let $A\in M_n(\mathbb R)$. Then $A$ is conically averaged if and only if $\lambda_{\max}(A_S) \leq 1$ and $\operatorname{rank}\left(\mathrm{Id}-A_S\right)=\operatorname{rank}\left(\mathrm{Id}-A\right)$.
\end{corollary}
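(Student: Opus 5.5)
The plan is to reduce the statement directly to Theorem~\ref{t:CharacterC} together with Remark~\ref{r:equ}. First, $\Id-A_S\succeq 0$ holds exactly when every eigenvalue of the symmetric matrix $\Id-A_S$ is nonnegative. These eigenvalues are $1-\lambda$ with $\lambda\in\sigma(A_S)$, so the condition is equivalent to $\lambda_{\max}(A_S)\le 1$. This handles the first condition in both directions.

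For the kernel condition, assume $\Id-A_S\succeq 0$ (equivalently, $\lambda_{\max}(A_S)\le 1$). Remark~\ref{r:equ} then gives $\ker(\Id-A)\subseteq\ker(\Id-A_S)$ automatically. Both kernels are subspaces of $\RR^n$, and one is contained in the other. Hence $\ker(\Id-A_S)\subseteq\ker(\Id-A)$ holds if and only if the two kernels are equal, which holds if and only if $\dim\ker(\Id-A_S)=\dim\ker(\Id-A)$. By rank--nullity this is equivalent to $\rank(\Id-A_S)=\rank(\Id-A)$.

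Assembling the pieces:
\begin{itemize}
\item[($\Rightarrow$)] If $A$ is conically averaged, Theorem~\ref{t:CharacterC} gives $\Id-A_S\succeq 0$, hence $\lambda_{\max}(A_S)\le1$. It also gives the kernel inclusion, which by the argument above yields equal ranks.
\item[($\Leftarrow$)] Conversely, $\lambda_{\max}(A_S)\le1$ gives $\Id-A_S\succeq0$, so the automatic inclusion from Remark~\ref{r:equ} is available. Equal ranks then turn that inclusion into equality of kernels, and in particular $\ker(\Id-A_S)\subseteq\ker(\Id-A)$. Theorem~\ref{t:CharacterC} now applies.
\end{itemize}

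The only point needing care is the order of the hypotheses. The rank equality alone does not give the kernel inclusion: one needs the reverse inclusion from Remark~\ref{r:equ}, and that requires positive semidefiniteness first. So in the ($\Leftarrow$) direction I would establish $\Id-A_S\succeq0$ before invoking the dimension argument.
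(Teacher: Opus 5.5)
Your proposal is correct and follows essentially the same route as the paper. You translate $\Id-A_S\succeq 0$ into $\lambda_{\max}(A_S)\le 1$, use the automatic inclusion $\ker(\Id-A)\subseteq\ker(\Id-A_S)$ from Remark~\ref{r:equ}, and turn kernel equality into rank equality via rank--nullity before invoking Theorem~\ref{t:CharacterC}. Your ordering in the ($\Leftarrow$) direction, with positive semidefiniteness secured first, is exactly how the paper handles the converse.
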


\begin{proof}
Note that
$$\mathrm{Id}-A_S \succeq 0 \Longleftrightarrow \lambda_{\max}(A_S) \leq 1.$$
By Theorem \ref{t:CharacterC} and Remark \ref{r:equ}, it suffices to show that under the condition $\mathrm{Id}-A_S\succeq 0$, we have $$\operatorname{ker}\left(\mathrm{Id}-A_S\right)=\operatorname{ker}(\mathrm{Id}-A) \Longleftrightarrow\operatorname{rank}\left(\mathrm{Id}-A_S\right)=\operatorname{rank}\left(\mathrm{Id}-A\right).$$
The ``$\Rightarrow$" direction follows by the rank plus nullity theorem, see, e.g., \cite[equation (4.4.15), page 199]{meyer}. Conversely, suppose that
$$
\operatorname{rank}\left(\mathrm{Id}-A_S\right)=\operatorname{rank}\left(\mathrm{Id}-A\right).
$$
Using the same argument in Remark \ref{r:equ}, we have
$\operatorname{ker}(\operatorname{Id}-A) \subseteq \operatorname{ker}\left(\operatorname{Id}-A_S\right)$ since $\mathrm{Id}-A_S \succeq 0$. Again by the rank plus nullity theorem, we obtain
$$
\dim\ker (\mathrm{Id}-A_S)=\dim\ker (\mathrm{Id}-A),
$$
which forces
$
\operatorname{ker}(\operatorname{Id}-A) =\operatorname{ker}\left(\operatorname{Id}-A_S\right).
$
\end{proof}

\begin{lemma}[Weyl's inequality]\emph{\cite[Theorem~4.3.1]{HJ}}
Let $H,K\in \mathbb{S}^n$. Then
$$
\lambda_{\min}(H+K)
\geq
\lambda_{\min}(H)+\lambda_{\min}(K).
$$
\end{lemma}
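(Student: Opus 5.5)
The plan is to prove Weyl's inequality through the Rayleigh quotient (Courant--Fischer) characterization of the smallest eigenvalue of a symmetric matrix. For any $S\in\mathbb{S}^n$ we have
$$
\lambda_{\min}(S)=\min_{\|z\|=1}\langle z,Sz\rangle .
$$
This follows from the spectral theorem. Diagonalize $S=Q\Lambda Q^{\top}$ with $Q$ orthogonal and set $w=Q^{\top}z$. Then $\langle z,Sz\rangle=\sum_i\lambda_i w_i^2$ with $\sum_i w_i^2=1$. This is a convex combination of the eigenvalues, so it is at least $\lambda_{\min}(S)$. Equality is attained at a unit eigenvector for $\lambda_{\min}(S)$.

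Next, apply this to $H+K$, which is again symmetric, so $\lambda_{\min}(H+K)$ is defined. Choose a unit vector $z_0$ attaining the minimum for $H+K$. Using linearity of the inner product and the Rayleigh bound for $H$ and for $K$ separately,
$$
\lambda_{\min}(H+K)=\langle z_0,Hz_0\rangle+\langle z_0,Kz_0\rangle\geq \lambda_{\min}(H)+\lambda_{\min}(K).
$$
This is the claim. Equivalently, one could note that the minimum of a sum of two functions over the unit sphere is at least the sum of their separate minima.

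There is no genuine obstacle. The only substantive ingredient is the variational characterization of $\lambda_{\min}$, and that rests on the spectral theorem for real symmetric matrices, which the paper already uses freely (for instance in Proposition~\ref{kSn}). I would therefore state it briefly with the one-line convex-combination justification above rather than cite a separate result.
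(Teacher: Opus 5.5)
Your proof is correct and complete. The paper does not prove this lemma itself; it cites \cite[Theorem~4.3.1]{HJ}. That is Weyl's theorem in full generality: it bounds every ordered eigenvalue $\lambda_i(H+K)$ in terms of eigenvalues of $H$ and $K$, and it is established there by min--max (Courant--Fischer-type) reasoning.

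You instead prove only the extreme case, and you do it directly. You use the Rayleigh-quotient identity $\lambda_{\min}(S)=\min_{\|z\|=1}\langle z,Sz\rangle$, which is the one-dimensional instance of Courant--Fischer and which you justify from the spectral theorem. You combine it with the fact that the minimum of a sum is at least the sum of the minima.

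Your route is more elementary and self-contained. It also covers exactly what the paper uses: the application with $H=\Id-A_S$ and $K=-E_S$ in the proof of Theorem~\ref{t:interior:done}. The citation buys the general indexed inequalities, which the paper never needs.
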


We are now ready to derive the following amazing explicit form of $\operatorname{int}\mathcal{C}$.

\begin{theorem}\label{t:interior:done}
We have $$
\operatorname{int} \mathcal{C} =\menge{A\in M_n(\mathbb R)}{\lambda_{\max}(A_S)<1}.
$$
Consequently, if $\lambda_{\max }\left(A_S\right)<1$, then $A$ is conically averaged.
\end{theorem}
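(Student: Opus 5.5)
We prove the two inclusions separately, using Corollary~\ref{c:computechar} (equivalently Theorem~\ref{t:CharacterC}) as the working description of $\mathcal{C}$. Set $U:=\menge{A\in M_n(\mathbb R)}{\lambda_{\max}(A_S)<1}$.

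\emph{Step 1: $U\subseteq\mathcal{C}$ and $U$ is open.} If $\lambda_{\max}(A_S)<1$, then $\Id-A_S$ is positive definite. Hence $\ker(\Id-A_S)=\{0\}\subseteq\ker(\Id-A)$, and Theorem~\ref{t:CharacterC} gives $A\in\mathcal{C}$. For openness, take $A\in U$ and $E\in M_n(\mathbb R)$ with $\|E\|_2<\delta:=1-\lambda_{\max}(A_S)$. Apply Weyl's inequality to $H=\Id-A_S$ and $K=-E_S$, and use $\lambda_{\min}(-E_S)\ge-\|E_S\|_2\ge-\|E\|_2$. This yields $\lambda_{\min}(\Id-(A+E)_S)\ge\delta-\|E\|_2>0$. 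So the $\delta$-ball around $A$ lies in $U$, hence $U$ is open. Since $U\subseteq\mathcal{C}$ and $U$ is open, $U\subseteq\inte\mathcal{C}$. The ``consequently'' part is exactly $U\subseteq\mathcal{C}$.

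\emph{Step 2: $\inte\mathcal{C}\subseteq U$.} This is the main step. Let $A\in\mathcal{C}$ with $\lambda_{\max}(A_S)=1$; note that $\lambda_{\max}(A_S)>1$ is already excluded by Corollary~\ref{c:computechar}. We must exhibit matrices arbitrarily close to $A$ that are not conically averaged. Pick a unit vector $u\in\ker(\Id-A_S)$. By Remark~\ref{r:equ}, $u\in\ker(\Id-A)$, so $Au=u$ and $A^\top u=2A_Su-Au=u$.

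The natural perturbation is a skew one, $A_t:=A+tS$ with $S:=uw^\top-wu^\top$, where $w\perp u$ is a unit vector and $t>0$ is small. Since $S$ is skew, $(A_t)_S=A_S$, so $\lambda_{\max}((A_t)_S)=1$ and $\ker(\Id-(A_t)_S)=\ker(\Id-A_S)\ni u$. On the other hand,
\begin{equation*}
(\Id-A_t)u=(\Id-A)u-tSu=-t(u(w^\top u)-w(u^\top u))=tw\neq0 .
\end{equation*}
Thus $u\notin\ker(\Id-A_t)$, so the kernel condition of Theorem~\ref{t:CharacterC} fails and $A_t\notin\mathcal{C}$ for every $t>0$. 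Letting $t\to0^+$ gives $A_t\to A$, so $A\notin\inte\mathcal{C}$.

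The only potential obstacle is the case $n=1$, where no unit $w\perp u$ exists. There a direct argument works instead: $A=1$, and $1+t\notin\mathcal{C}$ for $t>0$ by Example~\ref{e:Id}. Combining both inclusions gives $\inte\mathcal{C}=U$.
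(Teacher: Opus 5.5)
Your proof is correct, and Step 1 is essentially the paper's argument: Theorem~\ref{t:CharacterC} with trivial kernel, followed by Weyl's inequality and $\|E_S\|_2\le\|E\|_2$.

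Step 2 takes a genuinely different route through the choice of perturbation.
\begin{itemize}
\item The paper uses the symmetric rank-one perturbation $A_t=A+tuu^\top$ with $0\neq u\in\ker(\Id-A_S)$. Then $\langle u,(\Id-(A_t)_S)u\rangle=-t\|u\|^4<0$, so the semidefiniteness condition $\Id-(A_t)_S\succeq 0$ is what fails.
\item You use the skew perturbation $t(uw^\top-wu^\top)$, which freezes the symmetric part. The failure is instead in the kernel condition, since $u\in\ker(\Id-A)$ (because $A\in\mathcal{C}$) and $(\Id-A_t)u=tw\neq 0$.
\end{itemize}

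The paper's route is shorter and works uniformly in every dimension. It needs no auxiliary vector $w$ and no separate treatment of $n=1$. It also uses only the necessity of $\Id-A_S\succeq 0$, not the kernel inclusion for $A$.

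Your route gives a bit more information. For $n\ge 2$, a matrix with $\lambda_{\max}(A_S)=1$ fails to be interior even relative to the affine slice $A+\{\text{skew matrices}\}$, where $\lambda_{\max}(\cdot_S)$ is left untouched. It also produces explicit matrices $A_t$ with $\lambda_{\max}((A_t)_S)=1$ that are not conically averaged. This shows that the kernel condition in Theorem~\ref{t:CharacterC}, equivalently the rank condition in Corollary~\ref{c:computechar}, cannot be dropped.
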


\begin{proof}
Suppose $\lambda_{\max }\left(A_S\right)<1$, equivalently, $\lambda_{\min }\left(\operatorname{Id}-A_S\right)>0$. Then $\mathrm{Id}-A_S$ is invertible, so $\operatorname{ker}\left(\operatorname{Id}-A_S\right)=\{0\}$. Hence the kernel condition in Theorem \ref{t:CharacterC} is automatic, giving $A\in \mathcal{C}$. Let $\delta:=\lambda_{\min }\left(\operatorname{Id}-A_S\right)>0$ and let $E \in M_n(\mathbb{R})$ with $\|E\|_2<\delta$. Then $$
\|E_S\|_2
=
\left\|\frac{E+E^\top}{2}\right\|_2
\leq
\frac{1}{2}\bigl(\|E\|_2+\|E^\top\|_2\bigr)
=
\frac{1}{2}\bigl(\|E\|_2+\|E\|_2\bigr)
=
\|E\|_2.
$$
By Weyl's inequality,
$$\begin{aligned}
\lambda_{\min }\left(\operatorname{Id}-(A+E)_S\right) &=\lambda_{\min }\left(\operatorname{Id}-A_S-E_S\right) \\& \geq \lambda_{\min }\left(\operatorname{Id}-A_S\right)+\lambda_{\min }\left(-E_S\right) \\&=\lambda_{\min }\left(\operatorname{Id}-A_S\right)-\lambda_{\max }\left(E_S\right)
\\&\geq \lambda_{\min }\left(\operatorname{Id}-A_S\right)-\left\|E_S\right\|_2>0.
\end{aligned}$$
Therefore $\lambda_{\max }\left((A+E)_S\right)<1$. In view of Theorem~\ref{t:CharacterC},
this implies $A+E\in \mathcal{C}$. Thus $A\in \operatorname{int} \mathcal{C}$.

Conversely, suppose $A\in \inte\mathcal{C}$. Then $\Id-A_{S}\succeq 0$, i.e., $\lambda_{\max}(A_S)\leq 1$, by Theorem~\ref{t:CharacterC}.
We must show $\lambda_{\max}(A_S)<1$.
Suppose that $\Id-A_S$ is singular. Choose
$0\neq u\in\ker(\Id-A_S)$. For $t>0$, define
$
A_t:=A+tuu^\top .
$
We have
$$
\Id-(A_t)_S=\Id-A_S-tuu^\top,
$$
so
$$
\langle u,(\Id-(A_t)_S)u\rangle
=\langle u,-t u u^{\top}u\rangle=
-t\|u\|^4<0.
$$
Hence $\Id-(A_t)_S\not\succeq 0$, so $A_t\notin \mathcal{C}$ by Theorem \ref{t:CharacterC}. But $A_t\to A\in \mathcal{C}$ as $t\rightarrow0^+$. Thus $A\notin\operatorname{int}\mathcal{C}$, which is a
contradiction. This completes the proof.
\end{proof}

\begin{remark} Theorem~\ref{t:interior:done} implies that
if $A\in \inte \mathcal{C}$, then
$
k(A)=\lim_{\varepsilon\to 0^+} k(A-\varepsilon \Id).
$
Indeed, for every $\varepsilon>0$,
$$
\lambda_{\max}\big((A-\varepsilon \Id)_S\big)
=
\lambda_{\max}(A_S)-\varepsilon
<1.
$$
Hence $A-\varepsilon \Id\in \inte \mathcal{C}$, and the assertion follows
from the continuity of $k$ on $\inte \mathcal{C}$. This argument, however,
does not recover Lemma~\ref{singular lemma} in full generality, since in
that lemma one may have $A\in \mathcal{C}\setminus \inte \mathcal{C}$.
\end{remark}

\section{Angle between two subspaces}\label{s:angleb}

In this section, we investigate the Dixmier and Friedrichs angles between two subspaces.
As an application of our main results on linear conically averaged mappings,
we derive new formulae to compute both
angles. Recall that the cosine of the \emph{Dixmier angle} \cite{DJ} (also known as the minimal angle)
of two closed linear subspaces $U, V$ of a Hilbert space $X$ is
$$
c_D(U, V):=\sup \{\langle u, v\rangle \mid u \in U, v \in V,\|u\| \leq 1,\|v\| \leq 1\},
$$
and the cosine of the \emph{Friedrichs angle} \cite{FK} is
$$
c_F(U, V):=c_D(U \cap(U \cap V)^{\perp}, V \cap(U \cap V)^{\perp}).
$$
Note that $c_D(U, V)=c_F(U, V)$ if $U \cap V=\{0\}$. These angles are significant in describing convergence rates for projection methods such as the cyclic projection algorithm, the Douglas-Rachford algorithm for subspaces, etc. For more information on this topic,  see, e.g., \cite{bbnpw, DFpaper, DFbook, DH}. Below, $P_{U}, P_{V}$ denote the projection mappings onto subspaces $U, V$, respectively.

\subsection{Dixmier angle}

Our formula on computing the Dixmier angle relies on two important facts.
The first one connects the modulus of composition of projections onto subspaces
with the cosine of the Friedrichs angle.

\begin{fact}\emph{\cite[Corollary 3.3]{BBM}}\label{angle fact}
Let $U, V$ be closed linear subspaces of $X$. Then either $U=V=X$ and $k\left(P_V P_U\right)=0$, or
$$
k\left(P_V P_U\right)=\frac{1+c_F(U, V)}{2+c_F(U, V)}.
$$
\end{fact}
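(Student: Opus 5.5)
The plan is to compute $k(P_VP_U)$ directly from the sup-formula of the Proposition in the introduction, equivalently from Proposition~\ref{kcprop!} as $1/(2c(\Id-T))$ with $T:=P_VP_U$. The steps are: first split off the fixed-point space, then reduce to two-dimensional pieces, and finally compute a $2\times 2$ modulus explicitly.

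\textbf{Step 1: split off $U\cap V$.} If $U=V=X$, then $T=\Id$ and $k(T)=0$ by Lemma~\ref{zero case}, so assume otherwise. Put $M:=U\cap V$ and $W:=M^\perp$. Since $P_U$ and $P_V$ both commute with $P_M$, the operator $T$ is $\Id$ on $M$ and maps $W$ into $W$. Its restriction to $W$ is $P_{V'}P_{U'}$, where $U':=U\cap W$ and $V':=V\cap W$. For $z=m+w$ with $m\in M$, $w\in W$, both the numerator $\|z-Tz\|^2$ and the denominator $\|z\|^2-\scal{z}{Tz}$ depend only on $w$. Hence
\[
k(T)=k\bigl(P_{V'}P_{U'}|_W\bigr), \qquad c_F(U,V)=c_D(U',V')=:c,
\]
and we may assume $U\cap V=\{0\}$ with $c=c_D(U,V)$.

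\textbf{Step 2: reduce to the pieces of the Halmos decomposition.} By Halmos' two-subspace theorem, $X$ decomposes into the following parts, each reducing both $P_U$ and $P_V$:
\begin{itemize}
\item $U\cap V^\perp$, $U^\perp\cap V$ and $U^\perp\cap V^\perp$;
\item a generic part, unitarily equivalent to $\mathcal H\oplus\mathcal H$ with $P_U=\begin{pmatrix}\Id&0\\0&0\end{pmatrix}$ and $P_V=\begin{pmatrix}C^2&CS\\CS&S^2\end{pmatrix}$, where $C=\cos\Theta$ and $0<\Theta<\pi/2$.
\end{itemize}
In finite dimensions this is simply the principal-angle decomposition into $2\times 2$ blocks. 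On the first three parts $T$ is $0$ or a projection, and the ratio in the sup-formula is at most $1/2$; this is clear for $T=0$, and for $z\in U$ with $Tz=P_Vz$ it equals $1/2$. On the generic part, the spectral theorem applied to $\Theta$ reduces the supremum to the supremum over $\theta\in\sigma(\Theta)$ of the modulus of the scalar $2\times 2$ matrix
\[
T_\theta=\begin{pmatrix}c_\theta^2&0\\ c_\theta s_\theta&0\end{pmatrix}.
\]
The reason is that the ratio is a quotient of two quadratic forms that are direct integrals over the spectral measure, and a sup of such quotients is the essential sup of the fibrewise sups.

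\textbf{Step 3: the $2\times 2$ computation.} Since $\Id-T_\theta=\begin{pmatrix}s^2&0\\-cs&1\end{pmatrix}$ is invertible, Theorem~\ref{nonsingular theorem} applies. The symmetric part of $(\Id-T_\theta)^{-1}$ is
\[
\begin{pmatrix}1/s^2&c/(2s)\\c/(2s)&1\end{pmatrix},
\]
with trace $1+1/s^2$ and determinant $(4-c^2)/(4s^2)$. Taking its smallest eigenvalue and simplifying with $s^2=1-c^2$ should give
\[
k(T_\theta)=\frac{1+\cos\theta}{2+\cos\theta}.
\]
As a sanity check, $c=0$ gives $1/2$.

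\textbf{Step 4: conclude.} The function $t\mapsto(1+t)/(2+t)$ is increasing and continuous, and it is at least $1/2$ for $t\ge 0$. The supremum over all pieces is therefore $(1+\sup\cos\theta)/(2+\sup\cos\theta)$. It remains to identify $\sup_{\theta\in\sigma(\Theta)}\cos\theta$ with $c$. This holds because $c_D(U,V)=\|P_VP_U\|=\|C\|$ on the generic part, and the other parts contribute $0$ to $\|P_VP_U\|$ after $U\cap V$ has been removed. If the generic part is absent, then $c=0$ and the value $1/2$ comes from the remaining pieces, at least one of which is nontrivial because we are not in the case $U=V=X$.

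I expect the main obstacle to be Step 2 in infinite dimensions. Justifying that the supremum of the quotient over the generic part equals the essential supremum of the $2\times2$ moduli requires care: $\cos\theta$ may approach $c$ only through continuous spectrum. I would handle this by working directly with the cocoercivity form from Lemma~\ref{l:char:con}. For each fixed $K$, inequality~\eqref{e:average3} is a fibrewise positivity condition on a direct integral, so it holds if and only if it holds for almost every $\theta$. Monotonicity in $\theta$ then gives the supremum.
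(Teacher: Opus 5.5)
Your argument is correct, and it necessarily differs from the paper. The paper does not prove this statement; it imports it from \cite[Corollary 3.3]{BBM} and then uses it in the opposite direction. In Section~\ref{s:angleb} the Fact is the input, and Theorem~\ref{nonsingular theorem} turns it into formulas for $c_D$ and $c_F$. You instead apply Theorem~\ref{nonsingular theorem} to the $2\times 2$ blocks $T_\theta$, which is the same computation as the paper's two-lines example, and assemble the Fact from a two-subspace decomposition. There is no circularity, since Theorem~\ref{nonsingular theorem} never uses the Fact.

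What your route buys is a self-contained proof built from the paper's own tools. In $\RR^n$, which is all Section~\ref{s:angleb} needs, it is elementary: your Step~1, the principal-vector decomposition, and the observation that $\|z-Tz\|^2\le 2K(\|z\|^2-\scal{z}{Tz})$ holds on an orthogonal sum of reducing subspaces iff it holds on each summand. What the citation buys is avoiding the infinite-dimensional machinery (Halmos' theorem, spectral measures in a real Hilbert space), which is where your proof is heaviest.

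Two details need care.
\begin{enumerate}
\item On $U\cap V^\perp$, $U^\perp\cap V$ and $U^\perp\cap V^\perp$, the operator $P_VP_U$ is identically $0$; it is never a nontrivial projection there. So each of these pieces contributes exactly $1/2$.
\item The formula $k(T_\theta)=(1+\cos\theta)/(2+\cos\theta)$ requires $\sin\theta\neq 0$. At $\theta=0$ the matrix $T_0$ is a projection and $k(T_0)=1/2$, not $2/3$. This does no harm. Since $0$ is not an eigenvalue of $\Theta$, if $0\in\sigma(\Theta)$ then it is an accumulation point of $\sigma(\Theta)\cap(0,\pi/2)$. Your a.e.-fibre formulation then yields $\lim_{\theta\to 0^+}(1+\cos\theta)/(2+\cos\theta)=2/3=(1+c)/(2+c)$ with $c=\|\cos\Theta\|=1$. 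Your Step~4 supremum should therefore be read as this limit, not as an evaluation of $k(T_\theta)$ at $\theta=0$.
\end{enumerate}
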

The next one concerns the fixed point set of compositions of averaged mappings.

\begin{fact}\emph{\cite[Corollary 4.51]{BC}}\label{Fix PVPU}
Let $T_1, \ldots, T_m$ be averaged operators on $X$ such that $C:=\bigcap_{i=1}^m\Fix T_i \neq \varnothing$. Then $\operatorname{Fix}\left(T_m \cdots T_2 T_1\right)=C$.
\end{fact}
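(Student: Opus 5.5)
The plan is to prove the two inclusions separately, directly from the quantitative characterization \eqref{e:average1} of averagedness. We do not use any abstract composition result. The inclusion $C\subseteq\Fix(T_m\cdots T_2T_1)$ is immediate: if $x\in C$, then $T_1x=x$, then $T_2T_1x=T_2x=x$, and so on, so the composition fixes $x$. All the work is in the reverse inclusion.

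For the reverse inclusion, fix $x\in\Fix(T_m\cdots T_1)$ and pick some $y\in C$; this is where the hypothesis $C\neq\varnothing$ is used. Define the orbit $x_0:=x$ and $x_i:=T_ix_{i-1}$ for $i=1,\ldots,m$, so that $x_m=x$. Each $T_i$ is $\alpha_i$-averaged with $\alpha_i\in(0,1)$; the degenerate case $T_i=\Id$ is harmless, since then $x_i=x_{i-1}$ and that index can simply be dropped. Since $T_iy=y$, inequality \eqref{e:average1} with $k=\alpha_i$ gives
$$
\|x_i-y\|^2\le \|x_{i-1}-y\|^2-\frac{1-\alpha_i}{\alpha_i}\|x_{i-1}-x_i\|^2 \qquad (i=1,\ldots,m).
$$
Here we used $(\Id-T_i)y=0$ and $(\Id-T_i)x_{i-1}=x_{i-1}-x_i$.

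Summing these inequalities over $i$ telescopes to
$$
\|x_m-y\|^2\le\|x_0-y\|^2-\sum_{i=1}^m\frac{1-\alpha_i}{\alpha_i}\|x_{i-1}-x_i\|^2 .
$$
Because $x_m=x_0=x$, the sum on the right must be $\le 0$. Every coefficient $(1-\alpha_i)/\alpha_i$ is strictly positive, so $x_i=x_{i-1}$ for every $i$. Hence $x_i=x$ for all $i$, which says exactly that $T_ix=x$ for each $i$. Thus $x\in C$.

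The main point to watch is the strict positivity of the coefficients $(1-\alpha_i)/\alpha_i$, that is, $\alpha_i<1$. This is exactly where averagedness, rather than mere nonexpansiveness or conical averagedness with $k\ge1$, is essential. The claim is false otherwise: for instance, two reflections compose to the identity on $\RR$. The existence of a common fixed point $y$ is the other indispensable ingredient, since it anchors the telescoping estimate.
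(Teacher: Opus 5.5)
Your proof is correct. Note that the paper does not prove this statement; it only cites \cite[Corollary 4.51]{BC}. There the result is derived from the corresponding statement for strictly quasinonexpansive operators, i.e.\ $\|Tx-y\|<\|x-y\|$ whenever $x\notin\Fix T$ and $y\in\Fix T$. Averaged operators are strictly quasinonexpansive. The composition result is proved for two operators with a common fixed point and then extended by induction.

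Your route replaces that qualitative machinery with a single quantitative telescoping along the orbit $x_0,\dots,x_m$, with no induction and no intermediate notion. You only ever use \eqref{e:average1} with one argument in $\Fix T_i$. So your argument actually proves the result for strongly quasinonexpansive operators, i.e.\ those with $\|Tx-y\|^2\le\|x-y\|^2-\rho\|x-Tx\|^2$ for all $y\in\Fix T$ and some $\rho>0$, without using global nonexpansiveness. Conversely, the constants $(1-\alpha_i)/\alpha_i$ are not really needed. The chain $\|x-y\|=\|x_m-y\|\le\|x_{m-1}-y\|\le\cdots\le\|x_0-y\|=\|x-y\|$ forces equality at each step, and strict quasinonexpansiveness of $T_i$ then forces $x_{i-1}\in\Fix T_i$. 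This is the mechanism of \cite{BC} in one line, and it covers the more general strictly quasinonexpansive case.

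Your closing remark on $\alpha_i<1$ matters in this paper specifically. The paper's opening definition of $k$-averagedness allows $k\in[0,1]$, so every nonexpansive map would count as ``averaged''. Under that reading the statement is false, e.g.\ for $T_1=T_2=-\Id$ on $\RR$. The statement must therefore be read with the convention $\alpha\in(0,1)$ of \cite{BC}, which is exactly the one you use.
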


\begin{lemma}\label{angle lemma}
Let $U, V$ be closed linear subspaces of $X$, and let $x \in X$. Then $P_V P_U x=x$ if and only if $x \in U \cap V$.
\end{lemma}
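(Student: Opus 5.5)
The plan is to invoke Fact~\ref{Fix PVPU} with $m=2$, $T_1:=P_U$ and $T_2:=P_V$. The lemma then follows once we check that orthogonal projections onto closed subspaces are averaged and identify their fixed point sets.

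First, I will recall that $P_U$ and $P_V$ are firmly nonexpansive. Indeed, for a projection $P$ onto a closed linear subspace we have $\scal{x-y}{Px-Py}=\|Px-Py\|^2$, so $P$ is $1$-cocoercive. Hence $P=\tfrac12\Id+\tfrac12(2P-\Id)$ with $2P-\Id$ nonexpansive (the reflection is an isometry). Thus both projections are $\tfrac12$-averaged. Next, I will note that $\Fix P_U=U$ and $\Fix P_V=V$: a point is fixed by $P_U$ exactly when it is its own nearest point in $U$, i.e.\ when it lies in $U$. Consequently $C:=\Fix P_U\cap\Fix P_V=U\cap V$. This set is nonempty because it contains $0$. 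Fact~\ref{Fix PVPU} therefore gives $\Fix(P_VP_U)=U\cap V$, which is exactly the claimed equivalence $P_VP_Ux=x\Leftrightarrow x\in U\cap V$.

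If one prefers a self-contained argument avoiding Fact~\ref{Fix PVPU}, the direction ``$\Leftarrow$'' is immediate: $x\in U\cap V$ gives $P_Ux=x$ and then $P_Vx=x$. For ``$\Rightarrow$'' I would argue via norms: $\|x\|=\|P_VP_Ux\|\le\|P_Ux\|\le\|x\|$, so $\|P_Ux\|=\|x\|$. Combined with $\|x\|^2=\|P_Ux\|^2+\|x-P_Ux\|^2$, this forces $x=P_Ux\in U$. Then $P_Vx=x$, so $x\in V$ as well.

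There is no real obstacle here. The only point needing care is justifying the averagedness of projections, and this is standard via the cocoercivity identity above.
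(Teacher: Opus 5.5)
Your proof is correct and matches the paper's: both apply Fact~\ref{Fix PVPU} to the $\tfrac12$-averaged projections $P_U$ and $P_V$, using $\Fix P_U\cap\Fix P_V=U\cap V\ni 0$. Your supplementary norm-based argument for ``$\Rightarrow$'' is also valid and makes the lemma self-contained.
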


\begin{proof}
Since $P_V$ and $P_U$ are $(1/2)$-averaged and $(\operatorname{Fix}P_V) \cap (\operatorname{Fix}P_U)=U \cap V \neq \varnothing$, we have $\operatorname{Fix}P_V P_U=\left(\operatorname{Fix} P_V\right) \cap\left(\operatorname{Fix} P_U\right)=U \cap V$ by Fact \ref{Fix PVPU}.
\end{proof}

When $U \cap V \neq \{0\}$, the Dixmier angle is known to be trivial, as $c_D(U, V)=1$ by applying the Cauchy-Schwarz inequality.  Now we propose an explicit formula for the Dixmier angle when $U \cap V=\{0\}$,
which depends on Theorem \ref{nonsingular theorem}.

\begin{theorem}\emph{\textbf{(Dixmier angle formula)}}\label{angle theorem1}
Let $U, V$ be linear subspaces of $\RR ^n$. Then the following hold:
\begin{enumerate}
\item\label{i:dixmier1}
If $U \cap V \neq \{0\}$, then $c_D(U, V)=1$.
\item\label{i:dixmier2}
If $U \cap V = \{0\}$, then $$
c_D(U, V)=\frac{1}{2 \lambda_{\min }\left(\left(\left(\mathrm{Id}-P_V P_U\right)^{-1}\right)_S\right)-1}-1.
$$
\end{enumerate}
\end{theorem}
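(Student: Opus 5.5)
Part \ref{i:dixmier1} is immediate. Pick a unit vector $w\in U\cap V$. Taking $u=v=w$ in the definition gives $c_D(U,V)\geq \langle w,w\rangle=1$. Conversely, Cauchy--Schwarz gives $\langle u,v\rangle\le \|u\|\|v\|\le 1$ for all admissible $u,v$. Hence $c_D(U,V)=1$.

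For part \ref{i:dixmier2}, assume $U\cap V=\{0\}$. The plan is to combine Fact~\ref{angle fact} with Theorem~\ref{nonsingular theorem}, applied to the matrix $A:=P_VP_U$.

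\emph{Invertibility.} By Lemma~\ref{angle lemma}, $\Fix(P_VP_U)=U\cap V=\{0\}$. Hence $\Id-P_VP_U$ is injective, and therefore invertible on $\RR^n$.

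\emph{Conical averagedness.} $P_VP_U$ is a composition of averaged (nonexpansive) maps, so it is nonexpansive. A nonexpansive map is conically $1$-averaged, since $T=0\cdot\Id+1\cdot T$. So $P_VP_U$ is conically averaged.

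\emph{Apply Theorem~\ref{nonsingular theorem}.} It gives $\lambda:=\lambda_{\min}\bigl(((\Id-P_VP_U)^{-1})_S\bigr)>0$ and
$$k(P_VP_U)=\frac{1}{2\lambda}.$$

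\emph{Reduce to the Dixmier angle.} Since $U\cap V=\{0\}$, we have $(U\cap V)^\perp=\RR^n$, so $c_F(U,V)=c_D(U,V)=:c$. Also $U=V=\RR^n$ is excluded (it would give $U\cap V=\RR^n\neq\{0\}$, and the case $n=0$ is trivial). So Fact~\ref{angle fact} yields
$$\frac{1}{2\lambda}=\frac{1+c}{2+c}.$$

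\emph{Solve for $c$.} Write $\frac{1}{2\lambda}=1-\frac{1}{2+c}$. Then $\frac{1}{2+c}=\frac{2\lambda-1}{2\lambda}$, so $2+c=\frac{2\lambda}{2\lambda-1}=1+\frac{1}{2\lambda-1}$. This gives
$$c=\frac{1}{2\lambda-1}-1,$$
which is the claimed formula.

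The only point needing care is that $2\lambda-1\neq 0$. Since $c\in[0,1]$, the quantity $\frac{1+c}{2+c}$ lies in $[\tfrac12,\tfrac23]$. Hence $\lambda=\frac{2+c}{2(1+c)}\in[\tfrac34,1]$, so $2\lambda-1\geq\tfrac12>0$ and the division is legitimate.
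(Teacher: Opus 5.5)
Your proof is correct and follows the paper's route. Part (i) uses Cauchy--Schwarz. Part (ii) combines the invertibility of $\Id-P_VP_U$ from Lemma~\ref{angle lemma}, the formula of Theorem~\ref{nonsingular theorem}, Fact~\ref{angle fact}, and $c_D=c_F$ when $U\cap V=\{0\}$. Your explicit checks that $P_VP_U$ is conically averaged and that $2\lambda-1>0$ are details the paper leaves implicit.
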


\begin{proof}
\ref{i:dixmier1}:
If $U \cap V \neq \{0\}$, the result follows by taking the unit vector in $U \cap V$ and applying the Cauchy-Schwarz inequality.

\ref{i:dixmier2}:
If $U \cap V=\{0\}$, we have $1 \notin \sigma(P_V P_U)$ by Lemma \ref{angle lemma}.
Thus, by Theorem~\ref{nonsingular theorem},
\begin{equation}\label{e:pvpu}
k(P_V P_U)=\frac{1}{2} \frac{1}{\lambda_{\min }\left(\left((\mathrm{Id}-P_V P_U)^{-1}\right)_S\right)}.
\end{equation}
Also, by Fact~\ref{angle fact},
we have $$k\left(P_V P_U\right)=\frac{1+c_F(U, V)}{2+c_F(U, V)},$$
 equivalently
\begin{equation}\label{e:angle}
c_{F}(U,V)=\frac{1}{\displaystyle \frac{1}{k(P_{V}P_{U})}-1}-1.
\end{equation}
Since $c_D(U, V)=c_F(U, V)$ when $U \cap V=\{0\}$, the result follows by combining equations \eqref{e:pvpu} and
\eqref{e:angle}.
\end{proof}

The following example illustrates Theorem \ref{angle theorem1}.
\begin{example}
In $\mathbb{R}^2$, let $V$ be the $x$-axis and $U$ be the line with slope $\tan \theta$, and $\theta \in (0,\frac{\pi}{2})$. Then $$P_V=\left(\begin{array}{ll}1 & 0 \\ 0 & 0\end{array}\right), \text{ and }
P_U=\left(\begin{array}{cc}(\cos \theta)^2 & \cos \theta \sin \theta \\ \sin \theta \cos \theta & (\sin \theta)^2\end{array}\right).$$
We have $$P_V P_U=\left(\begin{array}{cc}(\cos \theta)^2 & \cos \theta \sin \theta \\ 0 & 0\end{array}\right),$$ and $$\Id-P_V P_U=\left(\begin{array}{cc}(\sin \theta)^2 & -\cos \theta \sin \theta \\ 0 & 1\end{array}\right).$$
Thus, $$(\mathrm{Id}-P_V P_U)^{-1}=\left(\begin{array}{cc}
\frac{1}{(\sin \theta)^2} & \frac{\cos \theta}{\sin \theta} \\
0 & 1
\end{array}\right),$$ and $$(\left(\mathrm{Id}-P_V P_U\right)^{-1})_S=\left(\begin{array}{cc}
\frac{1}{(\sin \theta)^2} & \frac{\cos \theta}{2 \sin \theta} \\
\frac{\cos \theta}{2 \sin \theta} & 1
\end{array}\right).$$
Therefore, $$\lambda_{\min }\left(\left(\left(\mathrm{Id}-P_V P_U\right)^{-1}\right)_S\right)=\frac{1+(\sin \theta)^2-\cos \theta}{2 (\sin \theta)^2},$$ and $$
\begin{aligned}
c_D(U, V) & =\frac{1}{2  \lambda_{\min }\left(\left(\left((\Id-P_V P_U\right)^{-1}\right)_S\right)-1}-1 \\
& =\frac{(\sin \theta)^2}{1+(\sin \theta)^2-\cos \theta-(\sin \theta)^2}-1 \\
& =\cos \theta,
\end{aligned}
$$
which implies that the Dixmier angle between $U$ and $V$ is $\theta$. See Figure~\ref{f:angled} below.
\end{example}

\begin{figure}[H]
\centering
\begin{tikzpicture}[scale=2]

\draw[thick,->] (-2,0) -- (2,0) node[anchor=north]{$x$};
\draw[thick,->] (0,-1) -- (0,2) node[anchor=west]{$y$};

\draw[red, very thick] (-2,0) -- (2,0);
\node[red] at (1.8,-0.2) {$V$: $x$-axis};

\draw[cyan,very thick] (-1.9,-0.633) -- (1.9,0.633);
\node[cyan] at (1.4,0.75) {$U$: $y = \tan\theta \cdot x$};

\draw[->,very thick] (0.5,0) arc[start angle=0,end angle=18.43,radius=0.5];
\node at (0.7,0.2) {$\theta$};

\end{tikzpicture}
\caption{Dixmier angle between two lines}
  \label{f:angled}
\end{figure}

\subsection{Friedrichs angle}

The Friedrichs angle can be viewed as a generalization of the Dixmier angle in the singular case, i.e., when $U \cap V \neq \{0\}$. For example, consider two planes that intersect along a line. Then the Dixmier angle between them is trivial, while the Friedrichs angle represents the nontrivial angle in the usual geometric sense.

As one of our main results and an amazing application of Theorem \ref{mainresult1}, we present a formula for computing the Friedrichs angle.

\begin{theorem}\emph{\textbf{(Friedrichs angle: formula I)}}\label{angle theorem2}
Let $U, V$ be linear subspaces of $\RR ^n$. Then the following hold:
\begin{enumerate}
\item \label{i:fried1}
If $U \cap V = \mathbb{R}^n$ (i.e., $U=V=\mathbb{R}^n$), then $c_F(U, V)=0$.
\item \label{i:fried2}
If $U \cap V=\{0\}$, then
$$
c_F(U, V)=\frac{1}{2 \lambda_{\min }\left(\left(\left(\mathrm{Id}-P_V P_U\right)^{-1}\right)_S\right)-1}-1.
$$
\item\label{i:fried3}
 If $U \cap V \neq \{0\}$ and $U \cap V \neq \mathbb{R}^n$, then$$
c_F(U, V)=\frac{1}{2 \lim _{\varepsilon \rightarrow 0^{+}} \lambda_{\min }\left(\left(((1+\varepsilon) \mathrm{Id}-P_V P_U)^{-1}\right)_S\right)-1}-1.
$$
\end{enumerate}
\end{theorem}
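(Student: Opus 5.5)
We treat the three cases separately, each reducing to Fact~\ref{angle fact} together with one of the modulus formulas of Section~\ref{s:linearw}. The common link is the inversion of Fact~\ref{angle fact}: whenever $(U,V)\neq(\RR^n,\RR^n)$, we have $k:=k(P_VP_U)=\frac{1+c_F}{2+c_F}$. Since $c_F\in[0,1]$, this gives $k\in[1/2,2/3]$, so $k\neq 1$ and $1/k-1>0$. We can therefore solve to get \eqref{e:angle}, $c_F(U,V)=\frac{1}{1/k-1}-1$. Writing $k=\frac{1}{2L}$, where $L$ is the relevant $\lambda_{\min}$ quantity (or its limit), gives $1/k-1=2L-1$. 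This is exactly the displayed expression in both \ref{i:fried2} and \ref{i:fried3}.

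For \ref{i:fried1}: if $U=V=\RR^n$, then $U\cap V=\RR^n$, so $(U\cap V)^\perp=\{0\}$. Both arguments of $c_D$ are then $\{0\}$, and the supremum in the definition of $c_D$ is taken over $u=v=0$ only, which gives $c_F(U,V)=0$.

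For \ref{i:fried2}: when $U\cap V=\{0\}$ we have $c_F=c_D$, so this is literally Theorem~\ref{angle theorem1}\ref{i:dixmier2}. Alternatively, we can redo that argument: Lemma~\ref{angle lemma} shows $1\notin\sigma(P_VP_U)$, and Theorem~\ref{nonsingular theorem} then gives $k(P_VP_U)=\frac{1}{2\lambda_{\min}(((\Id-P_VP_U)^{-1})_S)}$, which we combine with the inversion above.

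For \ref{i:fried3}: when $\{0\}\neq U\cap V\neq\RR^n$, Lemma~\ref{angle lemma} shows $\Fix P_VP_U=U\cap V\neq\{0\}$, so $1\in\sigma(P_VP_U)$ and Theorem~\ref{nonsingular theorem} does not apply. Instead we use Corollary~\ref{singular theorem}. Its hypothesis is that $P_VP_U$ is conically averaged, and this holds because $P_VP_U$ is nonexpansive, hence conically $1$-averaged; equivalently, Fact~\ref{angle fact} already gives finite $k$. Corollary~\ref{singular theorem} then says the limit $L:=\lim_{\varepsilon\to0^+}\lambda_{\min}(((1+\varepsilon)\Id-P_VP_U)^{-1})_S)$ exists in $(0,+\infty]$ and $k(P_VP_U)=\frac{1}{2L}$. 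Since $k\ge 1/2>0$ by Fact~\ref{angle fact}, $L$ is finite, with $L\in[3/4,1]$. Substituting into the inversion formula gives the claimed expression.

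The only step requiring care is this last bookkeeping: checking that $L$ is finite and that $2L-1\neq 0$, so that the formula is meaningful. Both follow from the bounds $k\in[1/2,2/3]$, which come from $c_F\in[0,1]$.
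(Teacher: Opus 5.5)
Your proposal is correct and matches the paper's proof step for step. Part (i) uses $(U\cap V)^\perp=\{0\}$, part (ii) uses the Dixmier formula since $c_F=c_D$, and part (iii) combines Lemma~\ref{angle lemma}, Corollary~\ref{singular theorem}, and the inversion of Fact~\ref{angle fact}. Your extra checks fill in details the paper leaves implicit: $k\in[1/2,2/3]$, hence $L\in[3/4,1]$ is finite and $2L-1\neq 0$, and $P_VP_U$ is conically averaged because it is nonexpansive.
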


\begin{proof}
\ref{i:fried1}: This follows by $(U \cap V)^{\perp}=\{0\}$.

\ref{i:fried2}: This follows from Theorem \ref{angle theorem1}, since $c_F(U, V)=c_D(U, V)$ if $U \cap V=\{0\}$.

\ref{i:fried3}: If $U \cap V \neq \{0\}$, we have
$1 \in \sigma\left(P_V P_U\right)$ by Lemma \ref{angle lemma}.
Thus, by Corollary \ref{singular theorem},
\begin{equation}\label{e:mon1}
k(P_V P_U)=\frac{1}{2} \frac{1}{\lim _{\varepsilon \rightarrow 0^{+}} \lambda_{\min }\left(\left(((1+\varepsilon) \mathrm{Id}-P_V P_U)^{-1}\right)_S\right)}.
\end{equation}
Since $U \cap V \neq \mathbb{R}^n$, we have
\begin{equation}
k\left(P_V P_U\right)=\frac{1+c_F(U, V)}{2+c_F(U, V)}
\end{equation}
 by Fact \ref{angle fact},
  equivalently
\begin{equation}\label{e:mon2}
c_{F}(U,V)=\frac{1}{\displaystyle \frac{1}{k(P_{V}P_{U})}-1}-1.
\end{equation}
The result then follows by combining equations \eqref{e:mon1} and \eqref{e:mon2}.
\end{proof}

The next example illustrates how to algorithmically compute the Friedrichs angle.
\begin{example}
In $\mathbb{R}^3$, let $$U:=\menge{(x,y,0)}{x,y\in\RR}, \text{ and }
 V:=\menge{(x,y,y)}{x,y\in\RR}.$$
 Then $U$ and $V$ are two planes intersecting along a line such that
$$P_U=\left(\begin{array}{lll}1 & 0 & 0 \\ 0 & 1 & 0 \\ 0 & 0 & 0\end{array}\right) \text{ and } P_V=\left(\begin{array}{lll}1 & 0 & 0 \\ 0 & \frac{1}{2} & \frac{1}{2} \\ 0 & \frac{1}{2} & \frac{1}{2}\end{array}\right).$$
(If the subspaces are given by the sets of linearly independent vectors, our first step would be to compute the projection matrices.) We have $$P_V P_U=\left(\begin{array}{lll}
1 & 0 & 0 \\
0 & \frac{1}{2} & 0 \\
0 & \frac{1}{2} & 0
\end{array}\right),$$ which coincides with the matrix $M$ in Example \ref{example PVPU}. Thus
$$\phi_{P_V P_U}(\varepsilon)=\lambda_{\min}\left(\left(((1+\varepsilon) \mathrm{Id}-P_V P_U)^{-1}\right)_S\right)=\frac{4 \varepsilon-\sqrt{2}+3}{4 \varepsilon^2+6 \varepsilon+2}$$
and
$$\lim _{\varepsilon \rightarrow 0^{+}} \lambda_{\min }\left(\left(\left((1+\varepsilon) \mathrm{Id}-P_V P_U\right)^{-1}\right)_S\right)=\frac{3-\sqrt{2}}{2}.$$ By Theorem \ref{angle theorem2} we have $$
\begin{aligned}c_F(U, V) &=\frac{1}{2 \lim _{\varepsilon \rightarrow 0^{+}} \lambda_{\min }\left(\left(\left((1+\varepsilon) \mathrm{Id}-P_V P_U\right)^{-1}\right)_S\right)-1}-1
\\ & =\frac{\sqrt{2}}{2},
\end{aligned}$$
which implies that the Friedrichs angle between $U$ and $V$ is $\pi/4$. See Figure~\ref{f:angle} below.
\end{example}

\begin{figure}[H]
\centering
\begin{tikzpicture}[scale=2.5]

\draw[thick,->] (-1/2,0,0) -- (2,0,0) node[anchor=north east]{$x$};
\draw[ultra thick,->] (0,-1/2,0) -- (0,2.1,0) node[anchor=north west]{$y$};
\draw[thick,->] (0,0,-1/2) -- (0,0,2.4) node[anchor=south]{$z$};

\filldraw[blue!20,opacity=0.6] (-1/4,-1/2,0) -- (1.8,-1/2,0) -- (1.8,1.8,0) -- (-1/4,1.8,0) -- cycle;
\node[blue!40!black] at (1.3,1.5,0) {$U: z=0$};

\filldraw[red!30,opacity=0.6] (-1/4,-1/2,-1/2) -- (-1/8,2,2) -- (1.94,2,2) -- (1.76,-1/2,-1/2) -- cycle;
\node[red!70!black] at (1.3,1,1) {$V: z=y$};

\filldraw[blue!40,opacity=0.6]
  (0,-1/2,-0.33) -- (0,1.8,-0.33) -- (0,1.8,2.1) -- (0,-1/2,2.1) -- cycle;
\node[blue!80!black] at (-0.5,0.3,1.0) {$(U\cap V)^{\perp}: x=0$};


\draw[very thick,blue!100!black] (0,-0.7,0) -- (0,2,0);
\node[blue!85!black,anchor=east] at (0.3,2.2,0) {$U\cap (U\cap V)^{\perp}$};

\draw[very thick,red!85!black]
(0,-0.75,-3/5) -- (0,2.1,1.7);
\node[red!85!black,anchor=west] at (-0.4,2.5,2.5) {$V\cap (U\cap V)^{\perp}$};

\def\angrad{0.30} 
\draw[very thick,purple!80!black]
  plot[domain=0:40, samples=60, variable=\t]
    ({0}, {\angrad*cos(\t)}, {\angrad*sin(\t)});
\node[purple!80!black,inner sep=1pt]
  at (0, {1.35*\angrad*cos(22.5)}, {0.85*\angrad*sin(22.5)}) {$\theta$};
\end{tikzpicture}
 \caption{Friedrichs angle between two planes}
  \label{f:angle}
\end{figure}

\begin{remark} See \cite[Lemma 9.5(7)]{DFbook} for finding the cosine of the Friedrichs angle
between two subspaces
via norms of various matrices of projections.
\end{remark}

\section{Nonlinear results}\label{s:nonlinear}

In this section, we establish further nonlinear results concerning the modulus of
conically averaged mappings, which may be viewed as generalizations or alternatives to the preceding results.
We start with a stability result on conically averaged mappings, which significantly improves
Lemma~\ref{l:minus}.

\subsection{A nonlinear inequality}

\begin{theorem}\label{T-gammaId}
Let $T: X \rightarrow X$ be conically averaged and let $\gamma \geq 0$. Then
$$
k(T-\gamma \Id) \leq k(T)+\frac{\gamma}{2};$$
consequently,
$T-\gamma\Id$ is conically averaged. In particular, $k(T-\mathrm{Id}) \leq k(T)+1/2$.
\end{theorem}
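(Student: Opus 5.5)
The plan is to improve the decomposition used in the proof of Lemma~\ref{l:minus}. The idea is to absorb the shift $-\gamma\Id$ into the nonexpansive part using the reflection $-\Id$, instead of rescaling $N$. Rescaling $N$ is what costs a full $\gamma$ in Lemma~\ref{l:minus}; using $-\Id$ should cost only $\gamma/2$.

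The case $\gamma=0$ is trivial, so assume $\gamma>0$. First I fix a representation of $T$ at the level $k:=k(T)$.

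\begin{enumerate}
\item If $k(T)>0$, Lemma~\ref{zero case}(ii) gives that $T$ is conically $k(T)$-averaged. Hence $T=(1-k)\Id+kN$ for some nonexpansive $N$.
\item If $k(T)=0$, Lemma~\ref{zero case}(iii) gives $T=\Id+v$. This is again of the form $(1-k)\Id+kN+v$ with $k=0$, and any choice of $N$ works.
\end{enumerate}

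By Lemma~\ref{zero case}(i), translations do not change the modulus, so I may drop $v$.

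Next, set $k':=k+\gamma/2>0$ and write
$$T-\gamma\Id=(1-k-\gamma)\Id+kN=(1-k')\Id+k'N',\qquad N':=\frac{kN-\frac{\gamma}{2}\Id}{k+\frac{\gamma}{2}}.$$
Expanding $(1-k')\Id+k'N'$ gives back $(1-k-\gamma)\Id+kN$, so the identity holds.

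The key step is to check that $N'$ is nonexpansive. By the triangle inequality and nonexpansiveness of $N$,
$$\|N'x-N'y\|\le \frac{k\|Nx-Ny\|+\frac{\gamma}{2}\|x-y\|}{k+\frac{\gamma}{2}}\le\|x-y\|.$$
In other words, $N'$ is a convex combination of the nonexpansive maps $N$ and $-\Id$. When $k=0$ it is simply $N'=-\Id$.

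It follows that $T-\gamma\Id$ is conically $k'$-averaged. Hence it is conically averaged, and $k(T-\gamma\Id)\le k'=k(T)+\gamma/2$. Taking $\gamma=1$ gives the particular case $k(T-\Id)\le k(T)+1/2$.

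The only real obstacle is finding the right decomposition: one has to see that $-\gamma\Id$ should be split as $\tfrac{\gamma}{2}\cdot(-\Id)$ absorbed into $N'$, together with a $-\tfrac{\gamma}{2}\Id$ shift of the identity coefficient. After that, everything reduces to the one-line triangle inequality. Sharpness can be checked with $T=\Id$: here $k(T)=0$, and Example~\ref{e:Id} gives $k((1-\gamma)\Id)=\gamma/2$ for $\gamma\ge0$, so equality holds.
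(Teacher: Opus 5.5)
Your proof is correct and essentially matches the paper's: both use $N'=\bigl(k(T)N-\tfrac{\gamma}{2}\Id\bigr)/\bigl(k(T)+\tfrac{\gamma}{2}\bigr)$ and check nonexpansiveness with the triangle inequality. The only difference is that the paper treats $k(T)=0$ separately, computing $k((1-\gamma)\Id)=\gamma/2$ exactly via Example~\ref{e:Id}, whereas you fold it into the same decomposition with $N'=-\Id$ and get the upper bound directly.
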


\begin{proof}
If $k(T)=0$, then $T=\operatorname{Id}+v$ for some $v \in X$ by Lemma~\ref{zero case}, hence $T-\gamma \Id=(1-\gamma)\Id+v$. Thus $k(T-\gamma \Id)=k((1-\gamma)\Id)=\gamma/2$ by Lemma~\ref{zero case}
 and Example \ref{e:Id}.

Now assume $k(T) \in(0,+\infty)$. Then $T$ is conically $k(T)$-averaged by Lemma~\ref{zero case}. By definition \begin{equation}\label{e1:T-gammaId}
T=(1-k(T)) \operatorname{Id}+k(T) N
\end{equation} for some nonexpansive operator $N$. Set \begin{equation}\label{e2:T-gammaId}
N^{\prime}:=\frac{(T-\gamma \Id)-\left[1-\left(k(T)+\frac{\gamma}{2}\right)\right] \Id}{k(T)+\frac{\gamma}{2}}.
\end{equation}
Plugging \eqref{e1:T-gammaId} into \eqref{e2:T-gammaId} gives $$N^{\prime}=\frac{k(T) N-\frac{\gamma}{2} \Id}{k(T)+\frac{\gamma}{2}}.$$ Using Triangle inequality and the nonexpansiveness of $N$, we have $$
(\forall x, y \in X)\ \left\|N^{\prime} x-N^{\prime} y\right\| \leq \frac{k(T)+|-\frac{\gamma}{2}|}{k(T)+\frac{\gamma}{2}}\|x-y\|=\|x-y\|,
$$which implies that $N^{\prime}$ is nonexpansive. Note that
$$T-\gamma \Id=\left[1-\left(k(T)+\frac{\gamma}{2}\right)\right] \Id+\left(k(T)+\frac{\gamma}{2}\right) N^{\prime}.$$ Thus, $T-\gamma \Id$ is conically $[k(T)+\frac{\gamma}{2}]$-averaged, which gives
$$
k(T-\gamma \mathrm{Id}) \leq k(T)+\frac{\gamma}{2}.
$$
\end{proof}

Using Theorem \ref{T-gammaId}, we can extend the key Lemma \ref{singular lemma} to the nonlinear case.

\begin{corollary}\label{c:negative:t}
Let $T: X \rightarrow X$ be conically averaged. Then $\lim _{\varepsilon \rightarrow 0^{+}} k(T-\varepsilon \mathrm{Id})$ exists, and
\begin{equation}\label{e:minus:epsilon}
k(T)=\lim _{\varepsilon \rightarrow 0^{+}} k(T-\varepsilon \mathrm{Id}).
\end{equation}
\end{corollary}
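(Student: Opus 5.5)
The proof follows the template of Lemma~\ref{singular lemma}, with Theorem~\ref{T-gammaId} supplying the upper bound and the characterization in Lemma~\ref{l:char:con} (via Corollary~\ref{c:char:con}) supplying the lower bound.

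\textbf{Upper bound.} By Theorem~\ref{T-gammaId}, for every $\varepsilon>0$ the operator $T-\varepsilon\Id$ is conically averaged, and
$$k(T-\varepsilon\Id)\le k(T)+\varepsilon/2.$$
Taking the limit superior gives
$$\varlimsup_{\varepsilon\to0^+}k(T-\varepsilon\Id)\le k(T)<+\infty.$$
Consequently $L:=\varliminf_{\varepsilon\to 0^+}k(T-\varepsilon\Id)$ is a finite nonnegative number.

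\textbf{Lower bound.} Next I would show $k(T)\le L$. For each $\varepsilon>0$ with $k(T-\varepsilon\Id)>0$, the operator $T-\varepsilon\Id$ is conically $k(T-\varepsilon\Id)$-averaged by Lemma~\ref{zero case}, so Lemma~\ref{l:char:con} gives, for all $x,y$,
$$\|Tx-Ty-(1+\varepsilon)(x-y)\|^2\le 2k(T-\varepsilon\Id)\big((1+\varepsilon)\|x-y\|^2-\langle x-y,Tx-Ty\rangle\big).$$
If instead $k(T-\varepsilon\Id)=0$, then $T-\varepsilon\Id=\Id+v$ by Lemma~\ref{zero case}. In that case $Tx-Ty=(1+\varepsilon)(x-y)$, so the left side vanishes and the inequality still holds trivially.

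Now fix $x,y$ and choose a sequence $\varepsilon_j\to0^+$ along which $k(T-\varepsilon_j\Id)\to L$. Letting $j\to\infty$, using continuity in $\varepsilon$ of both sides (the role played by Fact~\ref{f:l:supinf}), yields
$$\|Tx-Ty-(x-y)\|^2\le 2L\big(\|x-y\|^2-\langle x-y,Tx-Ty\rangle\big)\quad(\forall x,y\in X).$$
Corollary~\ref{c:char:con} then gives $k(T)\le L$; that corollary covers the case $L=0$ as well.

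\textbf{Conclusion.} Combining the two bounds,
$$k(T)\le\varliminf_{\varepsilon\to0^+}k(T-\varepsilon\Id)\le\varlimsup_{\varepsilon\to0^+}k(T-\varepsilon\Id)\le k(T),$$
so the limit exists and equals $k(T)$.

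\textbf{Main obstacle.} The delicate step is the passage to the limit in the lower bound. Unlike the linear case, we cannot reduce to a single vector $z$, so the inequality must be handled pointwise in the pair $(x,y)$. The cases where some $k(T-\varepsilon\Id)$ equals zero must be checked separately, as above. Working along a subsequence realizing the liminf avoids needing any sign information on the right-hand side.
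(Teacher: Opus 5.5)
Your proof is correct and follows essentially the same route as the paper. Both arguments take the upper bound $\varlimsup_{\varepsilon\to0^+}k(T-\varepsilon\Id)\le k(T)$ from Theorem~\ref{T-gammaId}, pass the inequality of Lemma~\ref{l:char:con} to the liminf, and conclude with Corollary~\ref{c:char:con}. The differences are minor: the paper rules out $k(T-\varepsilon\Id)=0$ outright (it would force $T=(1+\varepsilon)\Id+v$, which is not conically averaged), and it cites Fact~\ref{f:l:supinf} where you take a subsequence realizing the liminf.
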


\begin{proof}
By Theorem~\ref{T-gammaId},
\begin{equation}\label{e:holiday1}
\varlimsup_{\varepsilon \rightarrow 0^{+}} k(T-\varepsilon \Id) \leq k(T).
\end{equation}

In view of Lemma~\ref{zero case} and Example \ref{e:Id}, we may assume $k(T-\varepsilon \Id)\neq0$ for every $\varepsilon>0$, otherwise $T=(1+\varepsilon)\Id+v$ is not conically averaged. From
\eqref{e:holiday1} we see that $T-\varepsilon \Id$ is conically averaged for sufficiently small $\varepsilon>0$. Hence, by Lemma~\ref{l:char:con}, for any $x, y \in X$,
\begin{equation}\label{e:T-eId}
\|(T-\varepsilon \Id)  x-(T-\varepsilon \Id) y-(x-y)\|^2\leq 2 k(T-\varepsilon \Id)\left(\|x-y\|^2-\langle x-y,(T-\varepsilon \Id) x-(T-\varepsilon \Id) y\rangle\right).
\end{equation}
Note that $\varliminf_{\varepsilon \rightarrow 0^{+}} k(T-\varepsilon \mathrm{Id})$ is finite due to its nonnegativity and \eqref{e:holiday1}. Taking $\varliminf$ as $\varepsilon\rightarrow 0^+$ in \eqref{e:T-eId},
we obtain
$$
\begin{aligned}
\|T x-T y-(x-y)\|^2 & \leq 2 \varliminf_{\varepsilon\rightarrow 0^{+}} [k(T-\varepsilon\Id)\left(\|x-y\|^2-\langle x-y,(T-\varepsilon\Id) x-(T-\varepsilon\Id) y\rangle\right)] \\& \leq 2 \varliminf_{\varepsilon\rightarrow 0^{+}}k(T-\varepsilon\Id)\left(\|x-y\|^2-\langle x-y,T x-T y\rangle\right)
\end{aligned}
$$
by the continuity of norm and inner product, and Fact~\ref{f:l:supinf}.
It follows that
\begin{equation}\label{e:holiday2}
k(T)\leq \varliminf_{\varepsilon\rightarrow 0^{+}}k(T-\varepsilon\Id)
\end{equation}
by Corollary \ref{c:char:con}. The result follows by combining \eqref{e:holiday1} and \eqref{e:holiday2}.
\end{proof}

Following Giselsson \cite[Definition 3.7]{PG},
we define negatively conically averaged mappings.
\begin{definition} We say that $T:X\rightarrow X$ is negatively conically $\alpha$-averaged if $-T$ is conically
$\alpha$-averaged.
\end{definition}

\begin{corollary}\label{c:estimate}
Let $T: X \rightarrow X$. Suppose that $\Id-T$ is conically averaged. Then $$
k(-T) \leq k(\Id-T)+\frac{1}{2},
$$
i.e., $T$ is negatively conically $[k(\Id-T)+1/2]$-averaged.
\end{corollary}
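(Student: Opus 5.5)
The statement is a direct specialization of Theorem~\ref{T-gammaId}. I plan to apply that theorem to the conically averaged mapping $S:=\Id-T$ with the choice $\gamma=1$. Since $S-\gamma\Id=(\Id-T)-\Id=-T$, Theorem~\ref{T-gammaId} immediately gives
$$
k(-T)=k(S-\Id)\le k(S)+\tfrac{1}{2}=k(\Id-T)+\tfrac12 .
$$
This is the displayed inequality, and in particular $k(-T)<+\infty$, so $-T$ is conically averaged.

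For the second assertion, that $T$ is negatively conically $[k(\Id-T)+1/2]$-averaged, I need $-T$ to be conically averaged with the constant $K:=k(\Id-T)+1/2$ itself, not merely with some constant at least $k(-T)$. I distinguish two cases.

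\emph{Case $k(-T)>0$.} Lemma~\ref{zero case} says $-T$ is conically $k(-T)$-averaged. The monotonicity remark following \eqref{e:average2} (conically $k_0$-averaged implies conically $k$-averaged for every $k\ge k_0$) then gives that $-T$ is conically $K$-averaged, since $K\ge k(-T)$.

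\emph{Case $k(-T)=0$.} Then $-T=\Id+v$ for some $v\in X$. Because $K\ge 1/2>0$, Lemma~\ref{l:char:con} applies, and its inequality \eqref{e:average3} holds trivially, since both sides vanish for $-T=\Id+v$. So $-T$ is conically $K$-averaged in this case as well.

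Alternatively, the proof of Theorem~\ref{T-gammaId} constructs the decomposition explicitly with constant $k(S)+\gamma/2$ whenever $k(S)>0$. The case $k(S)=0$ gives $-T=-v$, and Example~\ref{e:Id} yields $k(-T)=1/2=K$, which is attained.

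No step is a real obstacle. The only points needing care are the bookkeeping identity $(\Id-T)-\Id=-T$ and the passage from the modulus inequality to actual $K$-averagedness, which the case analysis above handles.
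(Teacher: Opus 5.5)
Your proof is correct and is exactly the paper's: apply Theorem~\ref{T-gammaId} to $\Id-T$ with $\gamma=1$, using $(\Id-T)-\Id=-T$. Your case analysis for the ``i.e.'' clause is a valid refinement that the paper leaves implicit. Two small remarks: the case $k(-T)=0$ is actually vacuous, since $-T=\Id+v$ would force $\Id-T=2\Id+v$, which is not conically averaged by Example~\ref{e:Id}; and in your alternative remark, $k(\Id-T)=0$ gives $-T=v$ rather than $-v$, which is immaterial.
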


\begin{proof}
Apply Theorem \ref{T-gammaId} with $T$ replaced by $\Id-T$ and with $\gamma=1$.
\end{proof}

The above result is particularly useful in Section~\ref{s:hypoconvex}.

\subsection{A continuity result}\label{s:sub:continuity}

\begin{theorem}\label{1-conti}
Let $T: X \rightarrow X$ be conically averaged. Then $\lim _{\alpha \rightarrow 1^-} k(\alpha T)$ exists, and $$k(T)=\lim _{\alpha \rightarrow 1^-} k(\alpha T).$$
\end{theorem}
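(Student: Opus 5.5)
The plan is the same two-sided squeeze used in Lemma~\ref{singular lemma} and Corollary~\ref{c:negative:t}: bound $\varlimsup_{\alpha\to1^-}k(\alpha T)$ above by $k(T)$ and $\varliminf_{\alpha\to1^-}k(\alpha T)$ below by $k(T)$.

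\emph{Upper bound.} For $\alpha\in[0,1]$, Corollary~\ref{scalarmul} gives
$$k(\alpha T)\le \alpha k(T)+\frac{1-\alpha}{2}.$$
The right-hand side tends to $k(T)$ as $\alpha\to1^-$ (note $k(T)<+\infty$). Hence
$$\varlimsup_{\alpha\to1^-}k(\alpha T)\le k(T).$$
This also shows that $\alpha T$ is conically averaged for every $\alpha\in[0,1)$. In particular, $\varliminf_{\alpha\to1^-}k(\alpha T)$ is finite, since it lies between $0$ and $k(T)$.

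\emph{Lower bound.} Let $L:=\varliminf_{\alpha\to1^-}k(\alpha T)\in[0,+\infty)$. First I check positivity. If $k(\alpha T)=0$ for some $\alpha<1$, then $\alpha T=\Id+v$ by Lemma~\ref{zero case}. Then $T=\alpha^{-1}\Id+\alpha^{-1}v$, which by Lemma~\ref{zero case} and Example~\ref{e:Id} has infinite modulus when $\alpha\in(0,1)$, contradicting conical averagedness of $T$. So $k(\alpha T)>0$ for $\alpha$ close to $1$. Lemma~\ref{zero case} then says $\alpha T$ is conically $k(\alpha T)$-averaged, and Lemma~\ref{l:char:con} gives, for all $x,y\in X$,
$$\|\alpha Tx-\alpha Ty-(x-y)\|^2\le 2k(\alpha T)\big(\|x-y\|^2-\alpha\langle x-y,Tx-Ty\rangle\big).$$
Fix $x,y$ and take $\varliminf_{\alpha\to1^-}$ on both sides. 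The left side converges to $\|Tx-Ty-(x-y)\|^2$. On the right, the bracket converges to $\|x-y\|^2-\langle x-y,Tx-Ty\rangle$. Fact~\ref{f:l:supinf}(i), applied with $f(\alpha)=k(\alpha T)$ and $g$ the bracket, bounds the liminf of the product by $L$ times this limit. Therefore
$$\|Tx-Ty-(x-y)\|^2\le 2L\big(\|x-y\|^2-\langle x-y,Tx-Ty\rangle\big)\quad\text{for all }x,y\in X,$$
and Corollary~\ref{c:char:con} yields $k(T)\le L$.

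\emph{Conclusion and main obstacle.} Combining the two bounds gives
$$k(T)\le\varliminf_{\alpha\to1^-}k(\alpha T)\le\varlimsup_{\alpha\to1^-}k(\alpha T)\le k(T),$$
so the limit exists and equals $k(T)$. The delicate step is the lower bound. Passing the liminf through the product requires $L$ to be finite, which the upper bound supplies. It also requires that the bracket has an honest limit, which holds because it depends continuously (affinely) on $\alpha$. The positivity of $k(\alpha T)$ is only a technicality needed to invoke Lemma~\ref{zero case}; alternatively, if $k(\alpha T)=0$ the inequality holds trivially since the left side is then $0$.
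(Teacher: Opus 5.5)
Your proposal is correct and follows essentially the same argument as the paper. It uses Corollary~\ref{scalarmul} for the upper bound $\varlimsup_{\alpha\to1^-}k(\alpha T)\le k(T)$, rules out $k(\alpha T)=0$ via Lemma~\ref{zero case} and Example~\ref{e:Id}, and gets the lower bound by passing the $\varliminf$ through the inequality of Lemma~\ref{l:char:con} with Fact~\ref{f:l:supinf} before concluding with Corollary~\ref{c:char:con}.
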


\begin{proof}
For every $\alpha \in[0,1]$, we have $k(\alpha T) \leq \alpha k(T)+(1-\alpha)/2$ by Corollary \ref{scalarmul}. Taking $\varlimsup$ when $\alpha \rightarrow 1^-$ yields
\begin{equation}\label{e:scalar1}
\varlimsup_{\alpha \rightarrow 1^-} k(\alpha T) \leq k(T).
\end{equation}

In view of Lemma~\ref{zero case} and Example \ref{e:Id}, we may assume $k(\alpha T)\neq0$ for every $\alpha\in(0,1)$, otherwise $T=\alpha^{-1}(\Id+v)$ is not conically averaged. From \eqref{e:scalar1} we see that $\alpha T$ is conically $k(\alpha T)$-averaged for all $\alpha$ less than and sufficiently nearby $1$.
Hence, by Lemma~\ref{l:char:con},  for any $x, y \in X$,
\begin{equation}\label{e:alT}
\|\alpha T x-\alpha T y-(x-y)\|^2 \leq 2 k(\alpha T)\left(\|x-y\|^2-\langle x-y,\alpha T x-\alpha T y\rangle\right).
\end{equation}
Note that $\varliminf_{\alpha \rightarrow 1^{-}} k(\alpha T)$ is finite due to its nonnegativity and \eqref{e:scalar1}. Taking $\varliminf$ when $\alpha \rightarrow 1^-$ in \eqref{e:alT}, we obtain
$$
\begin{aligned}
\|T x-T y-(x-y)\|^2 & \leq 2 \varliminf_{\alpha \rightarrow 1^{-}} [k(\alpha T)\left(\|x-y\|^2-\langle x-y,\alpha T x-\alpha T y\rangle\right)] \\& \leq 2 \varliminf_{\alpha \rightarrow 1^{-}} k(\alpha T)\left(\|x-y\|^2-\langle x-y,T x-T y\rangle\right)
\end{aligned}
$$
due to the continuity of norm and inner product and Fact~\ref{f:l:supinf}.
It follows that
\begin{equation}\label{e:scalar2}
k(T) \leq \varliminf_{\alpha \rightarrow 1^{-}} k(\alpha T)
\end{equation}
by Corollary \ref{c:char:con}. Combining \eqref{e:scalar1} and \eqref{e:scalar2} gives
$$
k(T) \leq \varliminf_{\alpha \rightarrow 1^{-}} k(\alpha T) \leq \varlimsup_{\alpha \rightarrow 1^{-}} k(\alpha T) \leq k(T),
$$
and all inequalities turn into equalities.
\end{proof}

We show next that this continuity result essentially provides an alternative characterization of conically averaged matrices, in comparison with Theorem~\ref{mainresult1}.

\begin{lemma}\label{l:invertiblem}
 Let $A \in M_n(\mathbb{R})$ and $\alpha_{0}> 0$. Then the following hold:
\begin{enumerate}
\item\label{i:less1} $(\exists\delta>0)(\forall \alpha\in (\alpha_{0}-\delta,\alpha_{0}))\ \Id -\alpha A \text{ is invertible}.$
\item\label{i:big1} $(\exists\delta>0)(\forall \alpha\in (\alpha_{0}, \alpha_{0}+\delta))\ \Id -\alpha A \text{ is invertible}.$
\end{enumerate}
\end{lemma}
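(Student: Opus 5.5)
The plan is to exploit the finiteness of the spectrum of $A$. For $\alpha>0$, the matrix $\Id-\alpha A$ is singular if and only if $\det(\Id-\alpha A)=0$. This happens if and only if $1/\alpha\in\sigma(A)$, that is, if and only if $1/\alpha$ is a real eigenvalue of $A$. So the set of bad parameters is
$$S:=\menge{\alpha>0}{\Id-\alpha A \text{ is not invertible}}=\menge{1/\lambda}{\lambda\in\sigma(A)\cap\RR,\ \lambda>0}.$$
Since $A$ has at most $n$ distinct eigenvalues, $S$ is a finite subset of $\RPP$, with at most $n$ elements.

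Both claims then follow from finiteness. Consider the finite set $S\setminus\{\alpha_0\}$ (we remove $\alpha_0$ because it may itself belong to $S$). If this set is empty, take $\delta:=\alpha_0$. Otherwise set
$$\delta:=\min\Big\{\alpha_0,\ \min_{s\in S\setminus\{\alpha_0\}}|s-\alpha_0|\Big\}>0.$$
Then the punctured interval $(\alpha_0-\delta,\alpha_0)\cup(\alpha_0,\alpha_0+\delta)$ contains no point of $S$. It also lies in $\RPP$, because $\delta\le\alpha_0$. Hence $\Id-\alpha A$ is invertible for every $\alpha$ in $(\alpha_0-\delta,\alpha_0)$, which proves \ref{i:less1}, and for every $\alpha$ in $(\alpha_0,\alpha_0+\delta)$, which proves \ref{i:big1}. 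The same $\delta$ serves for both parts.

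An equivalent route, which avoids eigenvalues, uses the polynomial $p(\alpha):=\det(\Id-\alpha A)$. It has degree at most $n$ and satisfies $p(0)=1$, so it is not identically zero. Therefore it has finitely many real roots, and the same punctured-neighbourhood argument applies.

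There is no real obstacle here. The only point needing care is that $\alpha_0$ itself may be a singular parameter, which is why the intervals in \ref{i:less1} and \ref{i:big1} are open at $\alpha_0$. The condition $\delta\le\alpha_0$ keeps $\alpha>0$, although that is not strictly needed for invertibility.
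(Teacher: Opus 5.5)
Your proof is correct and takes essentially the same approach as the paper. The paper also uses the finiteness of $\sigma(A)$ to show $1/\alpha\notin\sigma(A)$ for $\alpha$ near but different from $\alpha_0$, splitting into the cases $1/\alpha_0\notin\sigma(A)$ and $1/\alpha_0\in\sigma(A)$. You instead work directly with the finite set of singular parameters $\alpha$, which handles both cases at once and gives a single explicit $\delta$ for both parts.
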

\begin{proof}  By choosing $\delta>0$ small, we can assume $\alpha>0$ when
$\alpha\in (\alpha_{0}-\delta, \alpha_{0}+\delta)$.
Write $\Id -\alpha A=\alpha(1/\alpha\Id-A)$.
To show $\Id -\alpha A$ invertible, it suffices to show $1/\alpha\not\in\sigma(A)$.
Since the proof for \ref{i:big1} is similar, we prove \ref{i:less1} only.

We prove \ref{i:less1} by considering two cases. Observe that $\sigma (A)$ has at most $n$ elements, including the complex eigenvalues. Let us view each element in $\sigma(A)$ as a point in the complex plane.

Case 1: $1/\alpha_{0}\not\in\sigma(A)$. In the complex plane, the distance from the point $(1/\alpha_{0},0)$ to
the set $\sigma(A)$ is positive. Then we can choose $\delta>0$ sufficiently small such that
$1/\alpha$ is nearby $1/\alpha_{0}$ and
$1/\alpha\not\in \sigma(A)$.

Case 2: $1/\alpha_{0}\in\sigma(A)$. In the complex plane, the distance from the point $(1/\alpha_{0},0)$ to
the set $\sigma(A)\setminus\{1/\alpha_{0}\}$ is positive. Because $\alpha\in (\alpha_{0}-\delta,\alpha_{0})$, we
have $1/\alpha>1/\alpha_{0}$. Also we can
choose $\delta>0$ sufficiently small such that $1/\alpha$ is nearby $1/\alpha_{0}$ and
$1/\alpha\not\in (\sigma(A)\setminus \{1/\alpha_{0}\})$.
Then $1/\alpha\not\in\sigma(A)$.
\end{proof}

Fix $A \in M_n(\mathbb{R})$. By virtue of Lemma~\ref{l:invertiblem},
we can define
\begin{equation}\label{e:psai:c}
(\forall \alpha \in (c,1))\ \psi_A(\alpha):=\lambda_{\min }\left(\left((\operatorname{Id}-\alpha A)^{-1}\right)_S\right),
\end{equation}
where $c<1$ is chosen sufficiently nearby $1$ such that
$\operatorname{Id}-\alpha A$ is invertible for every $\alpha \in (c,1)$.

Armed with Theorem~\ref{nonsingular theorem}, Theorem~\ref{1-conti} and Lemma~\ref{l:invertiblem}, we can now give another characterization of conically averaged matrices.

\begin{theorem}\emph{\textbf{(conically averaged matrix: characterization III)}}\label{alterchar}
Let $A \in M_n(\mathbb{R})$. Then $A$ is conically averaged if and only if $\lim _{\alpha \rightarrow 1^{-}} \psi_A(\alpha)$ exists and belongs to $(0, +\infty]$, in which case the formula holds: $$
k(A)=\frac{1}{2} \frac{1}{\lim _{\alpha \rightarrow 1^{-}} \psi_A(\alpha)}.
$$
\end{theorem}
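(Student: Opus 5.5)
The plan follows the proof of Theorem~\ref{mainresult1} line by line. The only change is that the perturbation $A-\varepsilon\Id$ is replaced by the scaling $\alpha A$ with $\alpha\to 1^-$. Theorem~\ref{1-conti} then takes the role of Lemma~\ref{singular lemma}, and Corollary~\ref{scalarmul} takes the role of Lemma~\ref{l:minus}.

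For ``$\Rightarrow$'', suppose $A$ is conically averaged. Fix $c$ as in \eqref{e:psai:c}; it exists by Lemma~\ref{l:invertiblem}\ref{i:less1} applied with $\alpha_0=1$. Shrinking $c$ if needed, we may take $c\ge 0$.

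For every $\alpha\in(c,1)$ the matrix $\alpha A$ is conically averaged by Corollary~\ref{scalarmul}. Moreover $\Id-\alpha A$ is invertible, so Theorem~\ref{nonsingular theorem} gives
\[
k(\alpha A)=\frac{1}{2}\frac{1}{\psi_A(\alpha)},
\]
and in particular $\psi_A(\alpha)>0$. By Theorem~\ref{1-conti}, $\lim_{\alpha\to1^-}k(\alpha A)$ exists and equals $k(A)\in[0,+\infty)$. Therefore $\psi_A(\alpha)=1/(2k(\alpha A))$ has a limit in $(0,+\infty]$ as $\alpha\to1^-$. This limit is $+\infty$ exactly when $k(A)=0$, i.e.\ $A=\Id$. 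The formula follows by taking reciprocals.

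For ``$\Leftarrow$'', suppose $L:=\lim_{\alpha\to1^-}\psi_A(\alpha)\in(0,+\infty]$. Then there is $\delta>0$ such that $\psi_A(\alpha)>0$ for all $\alpha\in(1-\delta,1)$. For such $\alpha$, the matrix $((\Id-\alpha A)^{-1})_S$ is positive definite, so Theorem~\ref{nonsingular theorem} shows that $\alpha A$ is conically averaged with
\[
k(\alpha A)=\frac{1}{2\psi_A(\alpha)}\;\longrightarrow\; K:=\frac{1}{2L}\in[0,+\infty).
\]
By Proposition~\ref{usefullinear}\ref{i:linear2}\&\ref{i:linear3}, for every $z\in\RR^n$,
\[
\|\alpha Az\|^2+(1-2k(\alpha A))\|z\|^2\le 2(1-k(\alpha A))\langle z,\alpha Az\rangle .
\]
Letting $\alpha\to1^-$ and using continuity, we get \eqref{e:linear:char1} with $k=K$. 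Proposition~\ref{usefullinear}\ref{i:linear3} then shows that $A$ is conically $K$-averaged, hence conically averaged.

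Finally, the formula $k(A)=1/(2L)$ follows by applying the already-proved ``$\Rightarrow$'' direction, which identifies $k(A)$ with $\lim_{\alpha\to1^-}k(\alpha A)=K$.

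The main point needing care is the ``$\Rightarrow$'' direction. There we must check that $\alpha A$ is genuinely conically averaged before invoking Theorem~\ref{nonsingular theorem}; Corollary~\ref{scalarmul} provides this for $\alpha\in[0,1]$. We must also handle the case $k(\alpha A)=0$, where the reciprocal is undefined. That case cannot occur for $\alpha<1$ near $1$, since $\Id-\alpha A$ is invertible and so $\alpha A\neq\Id$. In the limit, the degenerate case $k(A)=0$ corresponds exactly to $L=+\infty$, with the convention $1/(2\cdot\infty)=0$.
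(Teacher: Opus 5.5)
Your proposal is correct and follows the paper's proof essentially step by step. In the paper, Corollary~\ref{scalarmul} and Theorem~\ref{nonsingular theorem} give $k(\alpha A)=1/(2\psi_A(\alpha))$ for $\alpha$ near $1$; Theorem~\ref{1-conti} then passes to the limit for ``$\Rightarrow$'', and the limiting inequality from Proposition~\ref{usefullinear}\ref{i:linear3} gives ``$\Leftarrow$''. Your extra care in three places is sound and only makes explicit what the paper leaves implicit: taking $c\ge 0$ so that Corollary~\ref{scalarmul} applies, ruling out $k(\alpha A)=0$, and recovering the formula in ``$\Leftarrow$'' via ``$\Rightarrow$''.
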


\begin{proof}
The proof parallels that of Theorem~\ref{mainresult1}. With the given $A \in M_n(\mathbb{R})$,
define
$c<1$ and $\psi_{A}$ as in equation~\eqref{e:psai:c}.

``$\Rightarrow$": Suppose $A$ is conically averaged.
For every $\alpha \in(c, 1)$, we have $\operatorname{Id}-\alpha A$ is invertible. Moreover, $\alpha A$ is conically averaged by Corollary \ref{scalarmul}. Then Theorem \ref{nonsingular theorem} yields $$
k(\alpha A)=\frac{1}{2} \frac{1}{\lambda_{\min }\left(\left(( \mathrm{Id}-\alpha A)^{-1}\right)_S\right)}=\frac{1}{2} \frac{1}{\psi_A(\alpha)} .
$$
Since $A$ is conically averaged, by Theorem \ref{1-conti} we have $\lim _{\alpha \rightarrow 1^{-}} k(\alpha A)$ exists and $$
k(A)=\lim _{\alpha \rightarrow 1^{-}} k(\alpha A)=\frac{1}{2} \frac{1}{\lim _{\alpha \rightarrow 1^{-}} \psi_A(\alpha)}\in[0, +\infty).
$$
Thus, $\lim _{\alpha \rightarrow 1^{-}} \psi_A(\alpha)$ exists and belongs to $(0, +\infty]$, in which case the formula holds.

``$\Leftarrow$":
Suppose $\lim _{\alpha \rightarrow 1^{-}} \psi_A(\alpha)$ exists and belongs to $(0, +\infty]$.
Then by the property of one-sided limit, there exists $c<d<1$ such that for every $\alpha \in (d, 1)$: $\psi_A(\alpha)>0$, i.e., $\mathrm{Id}-\alpha A$ is invertible and $((\mathrm{Id}-\alpha A)^{-1})_S$ is positive definite. Applying Theorem \ref{nonsingular theorem} we have that $\alpha A$ is conically averaged for any $\alpha \in (d, 1)$ and $$
k(\alpha A)=\frac{1}{2} \frac{1}{\lambda_{\min }\left(\left((\operatorname{Id}-\alpha A)^{-1}\right)_S\right)}=\frac{1}{2} \frac{1}{\psi_A(\alpha)}.
$$
Since $\lim _{\alpha \rightarrow 1^{-}} \psi_A(\alpha) \in (0, +\infty]$, taking limit we have
\begin{equation}\label{e:finite:mod1}
\lim _{\alpha \rightarrow 1^{-}} k(\alpha A)=\frac{1}{2} \frac{1}{\lim _{\alpha \rightarrow 1^{-}} \psi_A(\alpha)}\in[0,+\infty).
\end{equation}
Applying Proposition \ref{usefullinear}\ref{i:linear3} to the conically averaged operator $\alpha A$, we have
$$
(\forall z\in \mathbb{R}^n)\ \|(\alpha A) z\|^2+\left(1-2 k(\alpha A)\right)
\|z\|^2 \leq 2(1-k(\alpha A))\langle z,(\alpha A) z\rangle.
$$
Sending $\alpha \rightarrow 1^{-}$ gives
\begin{equation}\label{e:finite:mod2}
(\forall z \in \mathbb{R}^n)\ \|A z\|^2+(1-2\lim_{\alpha \rightarrow 1^{-}} k(\alpha A))\|z\|^2
\leq 2(1-\lim _{\alpha \rightarrow 1^{-}} k(\alpha A))\langle z,A z\rangle
\end{equation}
due to the continuity of norm and inner product.
In view of \eqref{e:finite:mod1} and \eqref{e:finite:mod2}, we deduce that
$A$ is conically $\lim_{\alpha \rightarrow 1^{-}}k(\alpha A)$-averaged by applying Proposition \ref{usefullinear}\ref{i:linear3} again. Altogether, we complete the proof.
\end{proof}

\begin{corollary}\emph{\textbf{(matrix with nonzero fixed point: formula II)}}
Let $A \in M_n(\mathbb{R})$ and suppose that $\mathrm{Id}-A$ is not invertible (i.e., $1 \in \sigma(A)$). Then $A$ is conically averaged if and only if $\lim _{\alpha \rightarrow 1^{-}} \lambda_{\min }\left(\left(( \mathrm{Id}-\alpha A)^{-1}\right)_S\right)$ exists and belongs to $(0,+\infty]$, in which case the formula holds:
$$
k(A)=\frac{1}{2} \frac{1}{\lim _{\alpha \rightarrow 1^{-}} \lambda_{\min }\left(\left((\operatorname{Id}-\alpha A)^{-1}\right)_S\right)}.
$$
\end{corollary}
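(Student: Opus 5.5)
This corollary is a direct specialization of Theorem~\ref{alterchar}, so the plan is simply to check that the hypotheses and the objects there match the present setting. I will fix $A\in M_n(\mathbb{R})$ with $1\in\sigma(A)$, choose $c<1$ as in \eqref{e:psai:c} via Lemma~\ref{l:invertiblem}\ref{i:less1} with $\alpha_0=1$, and define $\psi_A$ on $(c,1)$ accordingly. Lemma~\ref{l:invertiblem} is exactly what is needed in the singular case. Case~2 of its proof handles $1/\alpha_0=1\in\sigma(A)$: for $\alpha$ slightly below $1$, the value $1/\alpha$ lies slightly above $1$ and misses $\sigma(A)$. Hence $\Id-\alpha A$ is invertible on a left neighbourhood of $1$, and
\[
\psi_A(\alpha)=\lambda_{\min}\left(\left((\Id-\alpha A)^{-1}\right)_S\right)
\]
is well defined there.

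Next I will observe that the expression $\lim_{\alpha\to1^-}\lambda_{\min}\left(\left((\Id-\alpha A)^{-1}\right)_S\right)$ in the corollary is literally $\lim_{\alpha\to1^-}\psi_A(\alpha)$. One-sided limits depend only on values near $1$, so the particular choice of $c$ does not matter. Invoking Theorem~\ref{alterchar} then gives both the equivalence and the formula $k(A)=\frac{1}{2}\big(\lim_{\alpha\to1^-}\psi_A(\alpha)\big)^{-1}$, with the convention $1/(+\infty)=0$.

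The hypothesis $1\in\sigma(A)$ is not used beyond guaranteeing that we are in the case where Theorem~\ref{nonsingular theorem} does not apply directly. It signals why the limit is needed: $(\Id-A)^{-1}$ does not exist, so the formula of Theorem~\ref{nonsingular theorem} cannot be evaluated at $\alpha=1$. I expect no real obstacle. The only point to state carefully is the well-definedness of $\psi_A$ near $1^-$ when $1$ is an eigenvalue, and Lemma~\ref{l:invertiblem}\ref{i:less1} settles that.
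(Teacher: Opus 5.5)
Your proposal is correct and matches the paper's argument. The paper gives no separate proof, treating the corollary as the direct specialization of Theorem~\ref{alterchar}, with Lemma~\ref{l:invertiblem}\ref{i:less1} at $\alpha_0=1$ ensuring $\psi_A$ is defined on a left neighbourhood of $1$; your observation that the hypothesis $1\in\sigma(A)$ plays no logical role is also accurate.
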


\begin{corollary}\emph{\textbf{(Friedrichs angle: formula II)}}\label{alterFriedrichs}
Let $U, V$ be linear subspaces of $\RR ^n$. If $U \cap V \neq \{0\}$ and $U \cap V \neq \mathbb{R}^n$, then$$
c_F(U, V)=\frac{1}{2 \lim _{\alpha \rightarrow 1^{-}} \lambda_{\min }\left(\left(( \mathrm{Id}-\alpha P_V P_U)^{-1}\right)_S\right)-1}-1.
$$
\end{corollary}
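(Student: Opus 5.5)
The plan is to copy the proof of Theorem~\ref{angle theorem2}\ref{i:fried3}, replacing Corollary~\ref{singular theorem} with the corollary immediately preceding this statement (matrix with nonzero fixed point: formula II). Set $A:=P_VP_U\in M_n(\RR)$. There are three steps.

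\textbf{Step 1: $1\in\sigma(A)$.} Since $U\cap V\neq\{0\}$, Lemma~\ref{angle lemma} gives $\Fix A=U\cap V\neq\{0\}$. Hence $\Id-A$ is not invertible.

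\textbf{Step 2: $A$ is conically averaged.} The projections $P_U$ and $P_V$ are $(1/2)$-averaged, and compositions of averaged maps are averaged. So $A$ is averaged, hence conically averaged. Alternatively, Fact~\ref{angle fact} directly gives the finite value $k(A)=\frac{1+c_F(U,V)}{2+c_F(U,V)}$, because $U\cap V\neq\RR^n$ rules out the case $U=V=\RR^n$. Either way, the ``only if'' direction of formula II applies. It shows that $L:=\lim_{\alpha\to1^-}\lambda_{\min}\big(((\Id-\alpha A)^{-1})_S\big)$ exists, lies in $(0,+\infty]$, and satisfies $k(A)=\frac{1}{2L}$.

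\textbf{Step 3: solve for $c_F$.} Fact~\ref{angle fact} gives $k(A)=\frac{1+c_F}{2+c_F}$. Inverting this as in \eqref{e:mon2} gives $c_F=\frac{1}{1/k(A)-1}-1$. Substituting $1/k(A)=2L$ yields $c_F(U,V)=\frac{1}{2L-1}-1$, which is the claimed formula.

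The main obstacle is to confirm that this expression makes sense, i.e.\ that $L$ is finite and $2L\neq1$. Since $c_F\in[0,1]$, we get $k(A)=\frac{1+c_F}{2+c_F}\in[1/2,2/3]$. This is strictly positive, so $L=\frac{1}{2k(A)}$ is finite. It is also strictly less than $1$, so $1/k(A)>1$ and $2L-1=1/k(A)-1>0$. Therefore no division by zero occurs and the inversion in Step 3 is legitimate.
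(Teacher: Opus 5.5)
Your proposal is correct and follows essentially the paper's route. The paper gives no separate proof; the corollary is meant to follow exactly as Theorem~\ref{angle theorem2}\ref{i:fried3}, with Corollary~\ref{singular theorem} replaced by formula~II and then combined with Fact~\ref{angle fact} and the inversion \eqref{e:mon2}. Your extra check that $k(P_VP_U)\in[1/2,2/3]$, which makes the limit finite and $2L-1>0$, is a useful detail the paper leaves implicit.
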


\section{Hypoconvex functions}\label{s:hypoconvex}
Conically averaged mappings are very useful for characterizing the proximal and
reflection mappings of hypoconvex functions. Recall that
for a function $f:X\rightarrow\RX$ and $\mu\in\RPP$, the proximal mapping of $f$ is defined by
$$(\forall x\in X)\ \prox{f}(x):=\argmin_{y\in X}\left\{f(y)+\frac{1}{2\mu}\|y-x\|^2\right\},$$
and the reflection mapping of $f$ is defined by $\refl{f}:=2\prox{f}-\Id$.

\begin{definition} For a function $f:X\rightarrow\RX$ and $\sigma\geq 0$, we say that $f$ is $\sigma$-hypoconvex
if $$f+\sigma\frac{\|\cdot\|^2}{2} \text{ is convex.} $$
A convex function is just the $0$-hypoconvex function.
\end{definition}

For a hypoconvex function $f$, possibly nonconvex, its Clarke subdifferential \cite{Clarke}
and Morduknovich limiting subdifferential \cite{Boris} coincide. We just write $\partial_{\sharp} f$ for both of them.
Below we use the convention $1/0 :=+\infty$.

\begin{fact}\emph{\cite[Propositions 6.3, 6.4]{BMW}}\label{f:hypoconvex}
Let $f:X\rightarrow\RX$ be $\sigma$-hypoconvex with $\sigma\geq 0$.
Then the following hold:
\begin{enumerate}
\item The subdifferential of possibly nonconvex $f$ is
$$\partial_{\sharp} f=\partial\bigg(f+\frac{\sigma}{2}\|\cdot\|^2\bigg)-\sigma\Id.$$
\item $\partial_{\sharp} f$ is maximally $(-\sigma)$-monotone.
\item If $\mu\in (0,1/\sigma)$, then $$\prox{f}=J_{\mu\partial_{\sharp} f}:=(\Id+\mu\partial_{\sharp} f)^{-1}=
(\Id+\partial_{\sharp}(\mu f))^{-1}=J_{\partial_{\sharp}(\mu f)}$$
is $1/(1-\mu\sigma)$-Lipschitz on $X$.
\end{enumerate}
\end{fact}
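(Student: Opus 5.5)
The statement to prove is Fact~\ref{f:hypoconvex}, which is quoted from \cite{BMW}. My plan is to reconstruct its three parts directly from the reduction to the convex function $g:=f+\frac{\sigma}{2}\|\cdot\|^2$. Since $f$ is $\sigma$-hypoconvex, $g$ is proper and convex; I will also assume it is lower semicontinuous, as is standard for this fact. Hence $\partial g$ is maximally monotone by Rockafellar's theorem.

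For part (i), I would use the sum rule for the Clarke (equivalently limiting) subdifferential when one summand is $C^1$: $\partial_\sharp g=\partial_\sharp f+\sigma\Id$. For convex $g$, the Clarke subdifferential coincides with the convex subdifferential $\partial g$. Rearranging gives $\partial_\sharp f=\partial g-\sigma\Id$.

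For part (ii), I would use Proposition~\ref{bigpropforcm}\ref{i:mono5} applied to $A:=\partial g$, which gives $m(\partial g-\sigma\Id)=m(\partial g)-\sigma\ge -\sigma$, so $\partial_\sharp f$ is $(-\sigma)$-monotone. For maximality, the graphs of $\partial_\sharp f$ and $\partial g$ are related by the bijection $(x,u)\mapsto(x,u+\sigma x)$. This bijection carries $(-\sigma)$-monotone extensions to monotone extensions, since
\[
\langle x-y,(u+\sigma x)-(v+\sigma y)\rangle=\langle x-y,u-v\rangle+\sigma\|x-y\|^2 .
\]
Maximal monotonicity of $\partial g$ therefore transfers to maximal $(-\sigma)$-monotonicity of $\partial_\sharp f$.

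For part (iii), fix $\mu\in(0,1/\sigma)$. The objective $y\mapsto f(y)+\frac{1}{2\mu}\|y-x\|^2$ equals $g(y)+\frac{1-\mu\sigma}{2\mu}\|y\|^2-\frac1\mu\langle x,y\rangle+\mathrm{const}$. This is strongly convex with modulus $(1-\mu\sigma)/\mu>0$, so it has a unique minimizer for every $x$. Fermat's rule (via part (i), or directly on the convex reformulation) gives $0\in\partial_\sharp f(y)+\frac1\mu(y-x)$, i.e.\ $y=J_{\mu\partial_\sharp f}(x)$. Positive homogeneity of the subdifferential gives $\mu\partial_\sharp f=\partial_\sharp(\mu f)$.

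For the Lipschitz constant, I take $p=\prox f(x)$ and $q=\prox f(y)$. Then $(x-p)/\mu\in\partial_\sharp f(p)$ and $(y-q)/\mu\in\partial_\sharp f(q)$. The $(-\sigma)$-monotonicity from (ii) yields
\[
\langle p-q,(x-y)-(p-q)\rangle\ge-\mu\sigma\|p-q\|^2 ,
\]
that is, $(1-\mu\sigma)\|p-q\|^2\le\langle p-q,x-y\rangle\le\|p-q\|\|x-y\|$. This gives the $1/(1-\mu\sigma)$ bound.

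The main obstacle I expect is part (i): justifying the subdifferential sum rule and the equality of the Clarke and limiting subdifferentials for hypoconvex $f$. I would handle it by citing the exact sum rule with a smooth summand in \cite{Clarke,Boris}.
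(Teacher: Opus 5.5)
The paper gives no proof of this statement; it imports it from \cite[Propositions 6.3, 6.4]{BMW}. Your argument therefore replaces a citation rather than competing with a written proof, and it is correct. You reduce to $g=f+\frac{\sigma}{2}\|\cdot\|^2$, transfer maximality through the shear $(x,u)\mapsto(x,u+\sigma x)$, and read the Lipschitz bound off the $(-\sigma)$-monotonicity inequality; these are the standard ingredients.

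Your self-contained version buys two things. First, it makes explicit that $f$ must be proper and lower semicontinuous. The paper's definition of $\sigma$-hypoconvexity omits this, but it is genuinely needed. On $\RR$, take the convex function with $f=0$ on $(0,1)$, $f(0)=f(1)=1$ and $f=+\infty$ elsewhere. Its subdifferential has graph $(0,1)\times\{0\}$, which is not maximally monotone, and $\prox{f}(-1)=\varnothing$. Second, your inequality $(1-\mu\sigma)\|p-q\|^2\le\langle p-q,x-y\rangle$ is $(1-\mu\sigma)$-cocoercivity of $\prox{f}$, which is stronger than the stated Lipschitz property. Applying Fact~\ref{kcfact} to $T=\Id-\prox{f}$, it gives Theorem~\ref{t:hypo:convex}\ref{i:inverse1} in one line.

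One point to state carefully: the identity $\prox{f}=J_{\mu\partial_\sharp f}$ needs the optimality condition to be sufficient as well as necessary. Your convex reformulation of the prox objective provides this. The nonconvex Fermat rule via part (i) gives only necessity, although your cocoercivity estimate with $x=y$ also shows that $J_{\mu\partial_\sharp f}$ is at most single-valued, which closes the gap either way.
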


We also need the following result, which improves Proposition~\ref{newOYprop} when $T_{1}=\Id$.
\begin{lemma} \label{l:reflect}
Let $T:X\rightarrow X$ and $\lambda\in (0,+\infty)$. Then $T$ is conically $\alpha$-averaged
if and only if $(1-\lambda)\Id+\lambda T$ is conically $(\lambda\alpha)$-averaged.
Consequently,
$k((1-\lambda)\Id+\lambda T)=\lambda k(T)$.
\end{lemma}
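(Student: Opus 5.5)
The plan is to work directly from the definition of conical averagedness, exploiting that the map $T\mapsto(1-\lambda)\Id+\lambda T$ is affine in $T$ and invertible for $\lambda>0$. Suppose $T$ is conically $\alpha$-averaged, say $T=(1-\alpha)\Id+\alpha N$ with $N$ nonexpansive and $\alpha\ge 0$. Substituting, I will compute
$$
(1-\lambda)\Id+\lambda T=(1-\lambda)\Id+\lambda(1-\alpha)\Id+\lambda\alpha N=(1-\lambda\alpha)\Id+\lambda\alpha N ,
$$
which by definition shows that $(1-\lambda)\Id+\lambda T$ is conically $(\lambda\alpha)$-averaged with the same nonexpansive $N$.

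For the converse, I will suppose $S:=(1-\lambda)\Id+\lambda T=(1-\lambda\alpha)\Id+\lambda\alpha N$ for some nonexpansive $N$. Since $\lambda>0$, I can solve for $T$:
$$
T=\tfrac{1}{\lambda}S-\tfrac{1-\lambda}{\lambda}\Id=\tfrac{1}{\lambda}\bigl[(1-\lambda\alpha)\Id+\lambda\alpha N\bigr]-\tfrac{1-\lambda}{\lambda}\Id=(1-\alpha)\Id+\alpha N .
$$
So $T$ is conically $\alpha$-averaged. There is no real obstacle here; the only care needed is the degenerate value $\alpha=0$, where $N$ plays no role and both sides reduce to $\Id$, and this is consistent with the computation above.

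For the consequence, the equivalence shows that the set $\{\beta\ge0: S \text{ is conically } \beta\text{-averaged}\}$ equals $\lambda\cdot\{\alpha\ge0: T\text{ is conically }\alpha\text{-averaged}\}$. Taking infima, and using that $\lambda>0$ commutes with the infimum, gives $k((1-\lambda)\Id+\lambda T)=\lambda k(T)$. If $T$ is not conically averaged, both sets are empty and both sides equal $+\infty$, which is consistent with the convention $\lambda\cdot(+\infty)=+\infty$.
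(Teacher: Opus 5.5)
Your proposal is correct and follows essentially the same route as the paper: substitute $T=(1-\alpha)\Id+\alpha N$ to obtain $(1-\lambda)\Id+\lambda T=(1-\lambda\alpha)\Id+\lambda\alpha N$ with the same nonexpansive $N$, reverse the algebra using $\lambda>0$, and then take infima. Your explicit handling of the converse, the degenerate case $\alpha=0$, and the case $k(T)=+\infty$ (where both admissible sets are empty) fills in details that the paper leaves implicit.
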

\begin{proof}
$T$ is conically $\alpha$-averaged if $T=(1-\alpha)\Id+\alpha N$ where $N:X\rightarrow X$ is nonexpansive.
The result follows from
\begin{align}
T=(1-\alpha)\Id+\alpha N &\Leftrightarrow (1-\lambda)\Id+\lambda T = (1-\lambda)\Id+\lambda[((1-\alpha)\Id+\alpha N)]\\
& \Leftrightarrow (1-\lambda)\Id+\lambda T=(1-\lambda\alpha)\Id+\lambda\alpha N;
\end{align}
see, e.g., \cite[Proposition 2.2(ii)]{BDP}.
\end{proof}

Our final main result concerns the proximal and reflection mappings of hypoconvex functions.
Although Theorem~\ref{t:hypo:convex}\ref{i:inverse1} is known, to the best of our knowledge,
Theorem~\ref{t:hypo:convex}\ref{i:inverse2}--\ref{i:primal1} are new.

\begin{theorem}\label{t:hypo:convex} Let $f:X\rightarrow\RX$ be $\sigma$-hypoconvex with $\sigma\geq 0$, and let
$\mu\in (0,1/\sigma)$. Then the following
hold:
\begin{enumerate}
\item\label{i:inverse1} $J_{\partial_{\sharp}(\mu f)^{-1}}$ is conically $(1/[2(1-\mu\sigma)])$-averaged. Consequently, $k(J_{\partial_{\sharp}(\mu f)^{-1}})\leq 1/[2(1-\mu\sigma)]$.
\item\label{i:inverse2} $R_{\partial_{\sharp}(\mu f)^{-1}}$ is conically $(1/(1-\mu\sigma))$-averaged. Consequently, $k(R_{\partial_{\sharp}(\mu f)^{-1}})\leq 1/(1-\mu\sigma)$.
\item\label{i:primal2} $\refl{f}$ is negatively conically $(1/(1-\mu\sigma))$-averaged. Consequently, $k(-\refl{f})\leq 1/(1-\mu\sigma)$.
\item\label{i:primal1} $\prox{f}$ is negatively conically $(1/[2(1-\mu\sigma)]+1/2)$-averaged. Consequently, $k(-\prox{f})\leq 1/[2(1-\mu\sigma)]+1/2$.
\end{enumerate}
\end{theorem}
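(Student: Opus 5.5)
The plan is to route everything through the comonotonicity of $A:=\partial_{\sharp}(\mu f)$ and Fact~\ref{JAcAcomonofact}. By Fact~\ref{f:hypoconvex}, $\partial_{\sharp} f$ is maximally $(-\sigma)$-monotone. Hence $A=\mu\partial_{\sharp} f$ is maximally $(-\mu\sigma)$-monotone, by scaling the graph as in Proposition~\ref{bigpropforcm}\ref{i:mono3}. By duality (Proposition~\ref{bigpropforcm}\ref{i:dualthing}, and also at the level of maximality, since inversion swaps graph coordinates), $A^{-1}$ is maximally $\rho$-comonotone with $\rho=-\mu\sigma$. The assumption $\mu\in(0,1/\sigma)$ gives $\rho>-1$.

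\textbf{Item \ref{i:inverse1}.} Fact~\ref{JAcAcomonofact} then says $J_{A^{-1}}$ is conically $\alpha$-averaged with
\[
\alpha=\frac{1}{2(\rho+1)}=\frac{1}{2(1-\mu\sigma)},
\]
and the bound on $k$ is immediate from the definition of $k$.

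\textbf{Item \ref{i:inverse2}.} Write $R_{A^{-1}}=2J_{A^{-1}}-\Id=(1-2)\Id+2J_{A^{-1}}$. Lemma~\ref{l:reflect} with $\lambda=2$ converts conical $\alpha$-averagedness of $J_{A^{-1}}$ into conical $2\alpha=1/(1-\mu\sigma)$-averagedness of $R_{A^{-1}}$.

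\textbf{Item \ref{i:primal2}.} Use the inverse resolvent identity $J_{A}+J_{A^{-1}}=\Id$. It holds here because both resolvents are single-valued with full domain (Fact on $J_A,J_{A^{-1}}$ for maximally comonotone operators with $\rho>-1$, applied to $A^{-1}$), and $u\in J_A x\iff x-u\in Au\iff u\in A^{-1}(x-u)\iff x-u=J_{A^{-1}}x$. Consequently
\[
R_{A^{-1}}=2(\Id-J_A)-\Id=-(2J_A-\Id)=-R_A .
\]
By Fact~\ref{f:hypoconvex}, $J_A=\prox{f}$, so $R_A=\refl{f}$. Thus $-\refl{f}=R_{A^{-1}}$, which is conically $1/(1-\mu\sigma)$-averaged by \ref{i:inverse2}. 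This is exactly negative conical averagedness of $\refl{f}$.

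\textbf{Item \ref{i:primal1}.} Apply Corollary~\ref{c:estimate} with $T=\prox{f}$. Its hypothesis is met because $\Id-\prox{f}=J_{A^{-1}}$ is conically averaged by \ref{i:inverse1}. The corollary gives
\[
k(-\prox{f})\le k(J_{A^{-1}})+\tfrac12\le \frac{1}{2(1-\mu\sigma)}+\frac12 .
\]

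To upgrade this bound on $k$ to the stated averagedness constant, there are two cases. If $k(-\prox{f})>0$, Lemma~\ref{zero case} shows $-\prox{f}$ is conically $k(-\prox{f})$-averaged, hence averaged for every larger constant by monotonicity of averagedness in the constant. If $k(-\prox{f})=0$, then $-\prox{f}=\Id+v$, which is conically $\beta$-averaged for every $\beta\ge0$ with $N=\Id+v/\beta$. Alternatively, one can skip this case split and feed the explicit decomposition from the proof of Theorem~\ref{T-gammaId} with $\gamma=1$ and $k(T)$ replaced by $\alpha=1/[2(1-\mu\sigma)]$; that construction works verbatim for any admissible constant.

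The main obstacle is bookkeeping rather than depth. One must check that maximality transfers under scaling and inversion, so that Fact~\ref{JAcAcomonofact} applies. One must also justify the identity $J_A+J_{A^{-1}}=\Id$ in this nonmonotone setting, which relies on single-valuedness from the earlier fact.
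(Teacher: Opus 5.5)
Your proposal is correct and follows essentially the same route as the paper: the resolvent identity $J_{A^{-1}}=\Id-J_A$ identifies $\Id-\prox{f}$ and $-\refl{f}$, then the comonotone-resolvent characterization (Fact~\ref{JAcAcomonofact}, equivalently Proposition~\ref{JACACOMONOPROP}) gives (i), Lemma~\ref{l:reflect} with $\lambda=2$ gives (ii)--(iii), and Corollary~\ref{c:estimate} gives (iv). Your explicit maximality bookkeeping under scaling and inversion, and your upgrade in (iv) from the bound on $k(-\prox{f})$ to the stated averagedness constant, fill in details the paper leaves implicit.
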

\begin{proof} Recall the resolvent identity \cite[Lemma 12.14]{RW}:
Every set-valued mapping $A:X\rightrightarrows X$ obeys
\begin{equation}\label{e:res:identity}
J_{A^{-1}}=\Id-J_{A}.
\end{equation}
(Although \cite[Lemma 12.14]{RW} is stated in $\RR^n$ there, it actually holds in a Hilbert space with the same proof.)
Equation \eqref{e:res:identity}, together with Fact~\ref{f:hypoconvex}, gives
\begin{equation}
J_{\partial_{\sharp}(\mu f)^{-1}}=\Id -(\Id+\partial_{\sharp}(\mu f))^{-1}=\Id-\prox{f}, \text{ and }
\end{equation}
\begin{equation}\label{e:primal:dual}
R_{\partial_{\sharp}(\mu f)^{-1}}=2J_{\partial_{\sharp}(\mu f)^{-1}}-\Id=\Id-2\prox{f}=-\refl{f}.
\end{equation}

\ref{i:inverse1} Apply \cite[Theorem 6.5(ii)]{BMW}. Or use Proposition~\ref{JACACOMONOPROP} because
$\partial_{\sharp}(\mu f)^{-1}$ is $(-\mu\sigma)$-comonotone and $-\mu\sigma>-1$.

\ref{i:inverse2} Combine \ref{i:inverse1}, \eqref{e:primal:dual}, and Lemma~\ref{l:reflect} with $\lambda=2$.

\ref{i:primal2} Use \ref{i:inverse2} and \eqref{e:primal:dual}.

\ref{i:primal1} Since $J_{\partial_{\sharp}(\mu f)^{-1}}=\Id -\prox{f}$ is conically averaged by \ref{i:inverse1},
Corollary~\ref{c:estimate} implies
\begin{equation}
k(-\prox{f})\leq \frac{1}{2(1-\mu\sigma)}+\frac{1}{2},
\end{equation}
so $-\prox{f}$ is conically averaged. Or write
$$-\prox{f}=\frac{-\refl{f}-\Id}{2}.$$
Then apply \ref{i:primal2},
Proposition~\ref{newOYprop}, and Example~\ref{e:Id}.
\end{proof}

\begin{remark} Astute readers may find that when $\sigma=0$, Theorem~\ref{t:hypo:convex}\ref{i:primal1}
only gives
that $\prox{f}$ is nonexpansive by Fact~\ref{f:one:lip}, instead of firmly nonexpansive.
We emphasize that the proximal mapping of a function is nonexpansive if and only if it
is firmly nonexpansive; see,
e.g., \cite[Theorem 3.2]{luo}.
\end{remark}

We finish the paper with an example illustrating that the estimations given in Theorem~\ref{t:hypo:convex} are tight!

\begin{example}[quadratic functions] Let
$Q\in \mathbb{S}^n$, $b\in\mathbb R^n$, and $c\in\mathbb R.$
Define the quadratic function
$$
f:\RR^n\rightarrow\RR: x\mapsto \frac12\langle Qx,x\rangle+\langle b,x\rangle+c.
$$
Set
$$
q:=\lambda_{\min}(Q).
$$
Then $\partial_\sharp f(x)=\nabla f(x)=Qx+b$ and by Proposition \ref{linear mc}, $m(\partial_\sharp f)=q$. In particular, $f$ is hypoconvex with the sharp hypoconvexity parameter
$$
        \sigma=\max\{0,-q\}.
$$

Let $\mu>0$ with $1+\mu q>0$. Then $\Id+\mu Q$ is positive definite, and hence
\begin{equation}\label{e:hypo:prox}
        \mathrm{P}_\mu f(x)=J_{\partial_{\sharp}(\mu f)}=(\Id+\mu Q)^{-1}(x-\mu b).
\end{equation}
We now compute the $k$-modulus of $J_{\partial_{\sharp}(\mu f)^{-1}}=\mathrm{Id}-\mathrm{P}_\mu f$, $R_{\partial_{\sharp}(\mu f)^{-1}}=-\mathrm{R}_\mu f$ and $-\mathrm{P}_\mu f$.
It follows from \eqref{e:hypo:prox} that
$$J_{\partial_{\sharp}(\mu f)^{-1}}=\left(\Id-(\Id+\mu Q)^{-1}\right)(x-\mu b)+\mu b,$$
$$R_{\partial_{\sharp}(\mu f)^{-1}}=\left(\Id-2(\Id+\mu Q)^{-1}\right)(x-\mu b)+\mu b,$$
and
$$-P_{\mu}f(x)=-(\Id+\mu Q)^{-1}(x-\mu b).$$
Since the modulus is translation invariant by Lemma \ref{zero case}, it suffices to consider only the linear part. Let
$$E:=\Id-(\mathrm{Id}+\mu Q)^{-1}.$$
Then $E$ is also symmetric and its eigenvalues are
$$
1-\frac{1}{1+\mu \lambda}=\frac{\mu \lambda}{1+\mu \lambda}, \quad \lambda \in \sigma(Q).
$$
The scalar function
$$
\lambda \mapsto \frac{\mu \lambda}{1+\mu \lambda}
$$
is increasing on $\menge{\lambda}{1+\mu \lambda>0}$. Hence
$$
\lambda_{\min }(E)=\frac{\mu q}{1+\mu q} .
$$
Apply Proposition \ref{kSn} to obtain
\begin{equation}\label{e:minh1}
k\big(J_{\partial_\sharp(\mu f)^{-1}}\big)=k(E)=\frac{1-\lambda_{\min }(E)}{2}=\frac{1}{2(1+\mu q)}.
\end{equation}
Similarly, we have
\begin{equation}\label{e:minh2}
k\big(R_{\partial_\sharp(\mu f)^{-1}}\big)=k(-\mathrm{R}_\mu f)=k(\Id-2(\Id+\mu Q)^{-1})=\frac{1}{1+\mu q},
\end{equation}
and
\begin{equation}\label{e:minh3}
k(-\mathrm{P}_\mu f)=k(-(\Id+\mu Q)^{-1})=\frac12+\frac{1}{2(1+\mu q)}.
\end{equation}

Thus, in the genuinely nonconvex case $q<0$, taking the sharp parameter $\sigma=-q$ yields exactly the estimation given by Theorem \ref{t:hypo:convex}, so the bounds in Theorem \ref{t:hypo:convex} are tight.
Moreover, $J_{\partial_{\sharp}(\mu f)}$ is not conically averaged because $(\Id+\mu Q)^{-1}$ has an eigenvalue
$1/(1+\mu q)>1$. In the convex case $q \geq 0$, Theorem \ref{t:hypo:convex} with $\sigma=0$ gives valid but generally nonsharp bounds. In fact, for convex quadratic functions, we have the modulus of conical averagedness given by
\eqref{e:minh1}--\eqref{e:minh3}, which are sharper.
\end{example}

\section*{Acknowledgments}
The authors thank the editor and the referees for careful reading and constructive comments. This work originated in part from the second author's Master thesis at the University of British Columbia.
H.\ Luo was partially supported by the NSF Grants of China and Chongqing (11991024, 12271071, KJZD-K 202500507). S.\ Song and X.\ Wang were partially supported by the Natural Sciences and Engineering Research Council of Canada. S.\ Song also acknowledges the supports from the Research Assistantship of Chongqing Normal University and the Melbourne Research Scholarship of the University of Melbourne.

\end{document}